\newtheorem{thm}{Theorem}[section]
\newtheorem{lem}[thm]{Lemma}
\newtheorem{prop}[thm]{Proposition}
\newtheorem{cor}[thm]{Corollary}
\theoremstyle{definition}
\newtheorem{defn}[thm]{Definition}
\newtheorem{rem}[thm]{Remark}
\newtheorem{ex}[thm]{Example}
\DeclareMathOperator{\SYT}{SYT}
\DeclareMathOperator{\SSYT}{SSYT}
\DeclareMathOperator{\ps}{ps}
\DeclareMathOperator{\nw}{nw}
\newcommand{\ds}{\displaystyle}
\newcommand{\bb}[1]{\mathbb{#1}}
\newcommand{\bbZ}{\bb{Z}}
\newcommand{\mc}[1]{\mathcal{#1}}
\newcommand{\mb}[1]{\mathbf{#1}}
\newcommand{\tb}[1]{\textbf{#1}}
\newcommand{\tw}[1]{\widetilde{#1}}
\newcommand{\ov}[1]{\overline{#1}}
\newcommand{\la}{\lambda}
\newcommand{\vp}{\varphi}
\newcommand{\RS}{{\rm RS}}
\newcommand{\qrst}{q{\rm RS}t}
\newcommand{\arm}{{\rm arm}}
\newcommand{\leg}{{\rm leg}}
\newcommand{\condP}[2]{\tw{P}(#1 \,|\, #2)}
\newcommand{\qtP}[2]{P(#1 \,|\, #2)}
\newcommand{\col}{{\rm col}}
\newcommand{\row}{{\rm row}}
\newcommand{\id}{{\rm id}}
\newcommand{\U}{\mathcal{U}}
\newcommand{\D}{\mathcal{D}}
\newcommand{\fP}{\mathcal{P}}
\newcommand{\bP}{\ov{\mathcal{P}}}
\newcommand{\Yred}{\Yfillcolour{red}}
\newcommand{\Ycyan}{\Yfillcolour{cyan}}
\newcommand{\Ywhite}{\Yfillcolour{white}}
\numberwithin{equation}{section}
\title{$\qrst$: A probabilistic Robinson--Schensted correspondence for Macdonald polynomials}
\author{Florian Aigner and Gabriel Frieden}
\date{}
\begin{document}

\maketitle

\begin{abstract}
We present a probabilistic generalization of the Robinson--Schensted correspondence in which a permutation maps to several different pairs of standard Young tableaux with nonzero probability. The probabilities depend on two parameters $q$ and $t$, and the correspondence gives a new proof of the squarefree part of the Cauchy identity for Macdonald polynomials (i.e., the equality of the coefficients of $x_1 \cdots x_n y_1 \cdots y_n$ on either side, which are related to permutations and standard Young tableaux). By specializing $q$ and $t$ in various ways, one recovers the row and column insertion versions of the Robinson--Schensted correspondence, several $q$- and $t$-deformations of row and column insertion which have been introduced in recent years in connection with $q$-Whittaker and Hall--Littlewood processes, and the Plancherel measure on partitions. Our construction is based on Fomin's growth diagrams and the recently introduced notion of a probabilistic bijection between weighted sets.
\end{abstract}

\tableofcontents

\section{Introduction}
\label{sec: Intro}

The Robinson--Schensted (RS) correspondence is a bijection between permutations and pairs of standard Young tableaux of the same shape.\footnote{The Robinson--Schensted correspondence often refers to a more general map defined on words, but in this paper we restrict the input to permutations.} This bijection, along with its generalization due to Knuth (RSK), has significant applications in combinatorics, representation theory, algebraic geometry, and probability. One of the most important features of RSK is that it gives a bijective proof of the Cauchy identity
\begin{equation}
\label{eq_intro_Cauchy}
\prod_{i,j \geq 1} \dfrac{1}{1-x_iy_j} = \sum_\la s_\la(\mathbf{x}) s_\la(\mathbf{y}),
\end{equation}
where the sum is over all partitions, and $s_\la(\mathbf{z})$ denotes a Schur function in the variables $\mathbf{z}=(z_1,z_2,\ldots)$. In particular, the RS case of RSK gives a bijective proof of the identity
\begin{equation}
\label{eq_intro_RS_Cauchy}
n! = \sum_{\la \vdash n} (f_\la)^2,
\end{equation}
where the sum is over all partitions of $n$, and $f_\la$ is the number of standard Young tableaux of shape $\la$; this identity arises from \eqref{eq_intro_Cauchy} by comparing the coefficients of the squarefree monomial $x_1 \cdots x_n y_1 \cdots y_n$ on either side.

In the past decade, several randomized versions of RS and RSK have been introduced \cite{BorodinPetrov16, BufetovMatveev18, BufetovPetrov15, MatveevPetrov17, OConnellPei13, Pei14, Pei17}. In these versions, a permutation (or, for RSK, a nonnegative integer matrix) has nonzero probability of mapping to several different pairs of tableaux. The probabilities depend on a parameter $q$ or $t$ in $[0,1)$, and the algorithms give proofs of generalized Cauchy identities for $q$-Whittaker or Hall--Littlewood symmetric functions. These randomized insertion algorithms have applications to probabilistic models such as the TASEP \cite{BorodinPetrov16, MatveevPetrov17} and the ASEP and stochastic six-vertex model \cite{BufetovMatveev18}, as well as to the asymptotics of infinite matrices over a finite field \cite{BufetovPetrov15}.

In this paper, we define a randomized generalization of Robinson--Schensted which depends on two parameters $q$ and $t$. Our map is designed to give a new proof of the squarefree part of the Cauchy identity for the Macdonald symmetric functions $P_\la(\mb{x};q,t)$. The $P_\la(\mb{x};q,t)$ are ``master'' symmetric functions, in the sense that they specialize to many other important families of symmetric functions, including the Schur, $q$-Whittaker, Hall--Littlewood, and Jack symmetric functions. Similarly, our randomized algorithm, which we call $\qrst$, specializes to many of the known variants of RS, including the row and column insertion versions of ordinary RS, $q$-deformations of row insertion and column insertion \cite{BorodinPetrov16, OConnellPei13, Pei14}, and a $t$-deformation of column insertion \cite{BufetovPetrov15}. Our algorithm also specializes to a $t$-deformation of row insertion that does not seem to have been previously considered (although it is related to the $q$-deformation of column insertion by transposition). Figure \ref{fig_spec_chart} summarizes these specializations.

 \begin{figure}
\begin{center}
\begin{tikzpicture}

\draw (7,7) node{\textcolor{blue}{$\qrst$}};

\footnotesize
\draw (1.5,5.19) node{\begin{tabular}{c} \color{red}{$t$-RS} \\ \color{red}{(row insertion)} \end{tabular}};
\draw (4.5,5) node{\begin{tabular}{c} \color{cyan}{$q$-RS} \\ \color{cyan}{(row insertion)} \\ \cite{BorodinPetrov16, MatveevPetrov17} \end{tabular}};
\draw (9.5,5) node{\begin{tabular}{c} \color{cyan}{$q$-RS} \\ \color{cyan}{(column insertion)} \\ \cite{OConnellPei13, Pei14, MatveevPetrov17} \end{tabular}};
\draw (12.5,5) node{\begin{tabular}{c} \color{red}{$t$-RS} \\ \color{red}{(column insertion)} \\ \cite{BufetovPetrov15, BufetovMatveev18} \end{tabular}};

\draw[->] (7,6.7) --node[right]{$q = t$} (7,4);
\draw[->] (7.2,6.7) --node[right]{$\;\; t \rightarrow \infty, q \rightarrow q^{-1}$} (8.7,5.7);
\draw[->] (7.5,6.7) -- (11,6.7) --node[right]{$\;\; q \rightarrow \infty, t \rightarrow t^{-1}$} (12,5.7);
\draw[->] (6.8,6.7) --node[left]{$t \rightarrow 0 \;\;$} (5.3,5.7);
\draw[->] (6.5,6.7) -- (3,6.7) --node[left]{$q \rightarrow 0 \;\;$} (2,5.7);

\draw[->] (7,3) --node[right]{$q \rightarrow 1$} (7,2.1);
\draw[->] (6.8,3) --node[above]{$q \rightarrow 0 \;\;$} (4,2.1);
\draw[->] (7.2,3) --node[above]{$\quad\;\; q \rightarrow \infty$} (10,2.1);

\draw[->] (1.5,4.3) --node[left]{$t \rightarrow 0$} (3,2.1);
\draw[->] (4.5,4.3) --node[left]{$q \rightarrow 0$} (3.5,2.1);
\draw[->] (9.5,4.3) --node[right]{$q \rightarrow 0$} (10.5,2.1);
\draw[->] (12.5,4.3) --node[right]{$t \rightarrow 0$} (11,2.1);

\draw (3,1.8) node{\textcolor{purple}{RS (row insertion)}};
\draw (11,1.8) node{\textcolor{purple}{RS (column insertion)}};

\draw (7,3.5) node{\textcolor{purple}{\begin{tabular}{c} $q$-Plancherel measure \\ (for $\sigma = \id$) \end{tabular}}};
\draw (7,1.61) node{\textcolor{purple}{\begin{tabular}[b]{c} Plancherel measure \\ (for $\sigma = \id$) \end{tabular}}};
\normalsize

\end{tikzpicture}
\end{center}
\caption{Specializations of the $(q,t)$-Robinson--Schensted correspondence. The color indicates the corresponding specialization of the Macdonald polynomials: $q$-Whittaker polynomials for the $q$-RS insertions; Hall--Littlewood polynomials for the $t$-RS insertions; Schur polynomials for the remaining four specializations.}
\label{fig_spec_chart}
\end{figure}
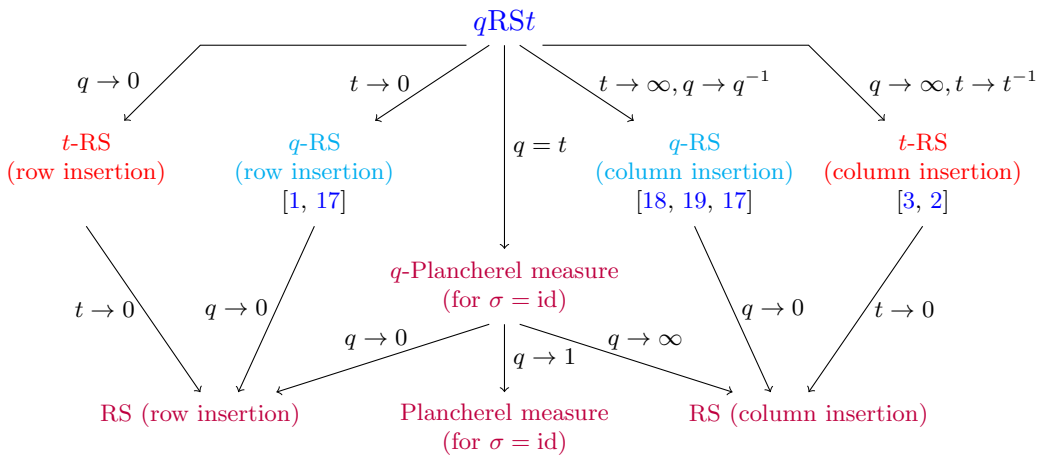

Another interesting specialization of $\qrst$ comes from setting $q = t$. This specialization reduces the Macdonald functions to the Schur functions, but it does not remove the randomness from our algorithm. Instead, it produces a one-parameter family of probabilistic insertion algorithms which interpolate between row insertion ($q=t \rightarrow 0$) and column insertion $(q=t \rightarrow \infty)$. For generic values of $q = t$, the probability that the identity permutation inserts to a pair of standard Young tableaux of shape $\la$ is equal to a $q$-analogue of the Plancherel measure of $\la$. At the intermediate value $q = t \rightarrow 1$, our results specialize to a pair of identities involving hook-lengths and the numbers $f_\la$ (Corollary \ref{cor_r>0_q=t=1}), which we believe are new.

\subsection{Methods}

The squarefree part of the Cauchy identity for Macdonald functions says that
\begin{equation}
\label{eq_intro_qt_Cauchy_sqfree}
\dfrac{(1-t)^n}{(1-q)^n} n! = \sum_{\la \vdash n} \sum_{P,Q} \psi_P(q,t) \vp_Q(q,t),
\end{equation}
where $\psi_P$ and $\vp_Q$ are rational functions in $q$ and $t$ coming from the monomial expansion of Macdonald polynomials, and in the inner sum, $P$ and $Q$ range over standard Young tableaux of shape $\la$. Although both sides of \eqref{eq_intro_qt_Cauchy_sqfree} are sums over sets of size $n!$, there is no bijection between these sets that proves the identity. Our motivation for introducing $\qrst$ was to prove \eqref{eq_intro_qt_Cauchy_sqfree} ``as bijectively as possible.'' Specifically, $\qrst$ assigns to each permutation $\sigma \in S_n$ a probability distribution\footnote{The expressions $\mc{P}(\sigma \rightarrow P,Q)$ are rational functions in $q,t$ which satisfy $\sum_{P,Q} \mc{P}(\sigma \rightarrow P,Q) = 1$, rather than honest probabilities. We find it convenient to refer to them as probabilities anyway, with the justification that whenever $q,t \in [0,1)$ or $q,t \in (1,\infty)$, these rational functions take on values in $[0,1]$.} $\mc{P}(\sigma \rightarrow P,Q)$ on pairs $P,Q$ of standard Young tableaux of shape $\la \vdash n$, such that for each $P$ and $Q$,
\begin{equation}
\label{eq_intro_P_Q}
\dfrac{(1-t)^n}{(1-q)^n} \sum_{\sigma \in S_n} \mc{P}(\sigma \rightarrow P,Q) = \psi_P(q,t) \vp_Q(q,t).
\end{equation}
It is clear that \eqref{eq_intro_qt_Cauchy_sqfree} follows from the existence of probability distributions satisfying \eqref{eq_intro_P_Q}. We say that these probability distributions give a \emph{probabilistic bijection} between the weighted sets of permutations and pairs of standard Young tableaux, where the weight functions come from \eqref{eq_intro_qt_Cauchy_sqfree}. Figure \ref{fig_n=2} shows the probabilistic bijection $\qrst$ and the explicit form of \eqref{eq_intro_qt_Cauchy_sqfree} in the case $n=2$; note that the weights on the left- and right-hand sides are different, so it is not possible to prove \eqref{eq_intro_qt_Cauchy_sqfree} by an ordinary bijection.

\begin{figure}
\begin{center}
\begin{tikzpicture}

\draw (12,3) node{$\textcolor{blue}{\dfrac{(1-t)^3(1-q^2)}{(1-q)^3(1-qt)}}$};
\draw (12,0) node{$\textcolor{blue}{\dfrac{(1-t)(1-t^2)}{(1-q)(1-qt)}}$};
\draw (0,3) node{$\textcolor{red}{\dfrac{(1-t)^2}{(1-q)^2}}$};
\draw (0,0) node{$\textcolor{red}{\dfrac{(1-t)^2}{(1-q)^2}}$};

\draw[dashed,->] (2.8,3) -- (7.6,3);
\draw[dashed,->] (2.8,2.8) -- (7.6,0.2);
\draw[dashed,->] (2.8,0.2) -- (7.6,2.8);
\draw[dashed,->] (2.8,0) -- (7.6,0);

\draw (3.7,3.5) node{\footnotesize $\dfrac{1-t}{1-qt}$};
\draw (3,2) node{\footnotesize $\dfrac{t(1-q)}{1-qt}$};
\draw (3,1) node{\footnotesize $\dfrac{q(1-t)}{1-qt}$} (7.6,2.7);
\draw (3.7,-0.5) node{\footnotesize $\dfrac{1-q}{1-qt}$} (7.6,0);

\draw (2,3) node{$12$};
\draw (2,0) node{$21$};
\draw (9,3) node{$\young(12), \young(12)$};
\draw (9,0) node{$\young(1,2), \young(1,2)$};

\draw[dashed,->] (4,4.5) --node[above]{$\mc{P}(\sigma \rightarrow P,Q)$} (6,4.5);
\draw (0,4.5) node{$\omega(\sigma)$};
\draw (2,4.5) node{$\sigma$};
\draw (9,4.5) node{$P,Q$};
\draw (12,4.5) node{$\ov{\omega}(P,Q) = \psi_P\vp_Q$};

\draw (5.5,-2) node{$\implies \quad \textcolor{red}{2 \dfrac{(1-t)^2}{(1-q)^2}} = \textcolor{blue}{\dfrac{(1-t)^3(1-q^2)}{(1-q)^3(1-qt)} + \dfrac{(1-t)(1-t^2)}{(1-q)(1-qt)}}$};

\end{tikzpicture}
\end{center}

\caption{The probabilistic bijection $\qrst$ for $n=2$, which proves the identity shown at the bottom of the figure.}
\label{fig_n=2}
\end{figure}
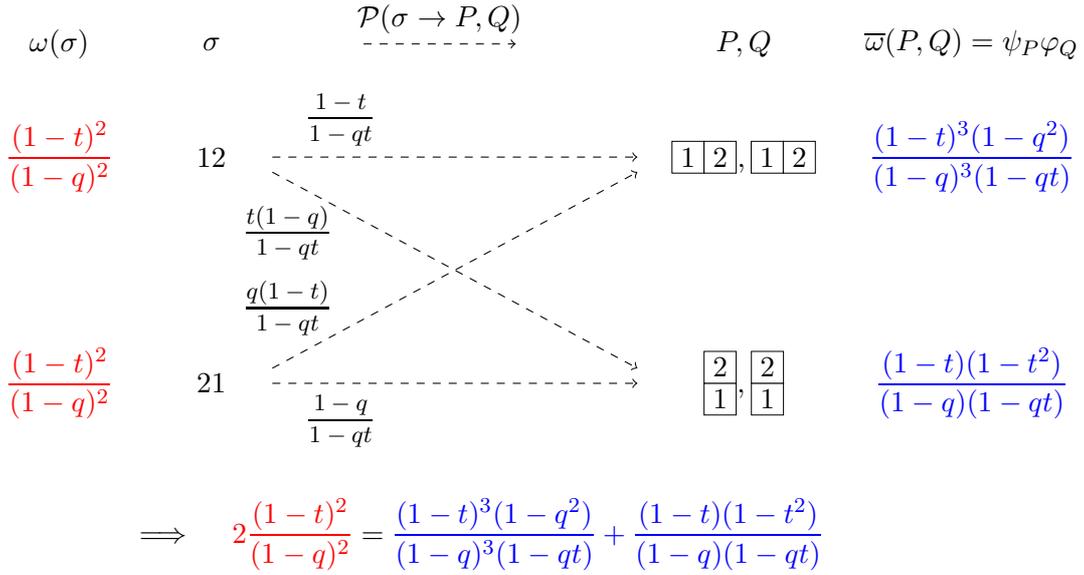

The probabilities $\mc{P}(\sigma \rightarrow P,Q)$ are built recursively out of probabilistic rules for inserting a new number into a tableau $T$, and for re-inserting a number in $T$ that is displaced, or ``bumped,'' by the insertion of a smaller number. Borrowing the approach used in \cite{Pei14, BufetovMatveev18}, we define our probabilistic rules using the framework of Fomin's growth diagrams \cite{Fomin86,Fomin_DGG_2}. This allows us to reduce our problem to finding a family of probabilistic bijections that prove the commutation relation
\begin{equation}
\label{eq_intro_UD}
U_{q,t}D_{q,t} - D_{q,t}U_{q,t} = \dfrac{1-t}{1-q} I,
\end{equation}
where $U_{q,t}$ and $D_{q,t}$ are $(q,t)$-weighted versions of the up and down operators on Young's lattice, and $I$ is the identity.

\subsubsection{More on probabilistic bijections}

We take a moment here to further discuss the notion of probabilistic bijection between two weighted sets, which we believe deserves more attention in the combinatorics community. Suppose $X$ and $Y$ are sets equipped with nonzero weight functions $\omega$ and $\ov{\omega}$, and that
\begin{equation}
\label{eq_intro_prob_bij}
\sum_{x \in X} \omega(x) = \sum_{y \in Y} \ov{\omega}(y).
\end{equation}
As we saw above, one way to prove this identity is to define, for each $x \in X$, a probability distribution $\mc{P}(x \rightarrow y)$ on $Y$ such that
\begin{equation}
\label{eq_intro_x_sums}
\sum_{x \in X} \omega(x) \mc{P}(x \rightarrow y) = \ov{\omega}(y)
\end{equation}
for each $y \in Y$. Given probability distributions $\mc{P}(x \rightarrow y)$ satisfying \eqref{eq_intro_x_sums}, we may define, for each $y \in Y$, a ``backward'' or ``inverse'' probability distribution $\ov{\mc{P}}(x \leftarrow y)$ on $X$ by the equation
\begin{equation}
\label{eq_intro_prob_bij_compatibility}
\omega(x) \mc{P}(x \rightarrow y) = \ov{\mc{P}}(x \leftarrow y) \ov{\omega}(y).
\end{equation}

A simple but powerful observation, which was used in \cite{BufetovMatveev18} and formalized in \cite{BufetovPetrov19}, is that this line of reasoning is reversible: if one can find probability distributions $\mc{P}(x \rightarrow y)$ and $\ov{\mc{P}}(x \leftarrow y)$ such that \eqref{eq_intro_prob_bij_compatibility} holds, then equations \eqref{eq_intro_x_sums} and \eqref{eq_intro_prob_bij} follow. In practice, it may be much easier to verify that a collection of expressions $\mc{P}(x \rightarrow y)$ and $\ov{\mc{P}}(x \leftarrow y)$ define probability distributions (that is, $\sum_{y \in Y} \mc{P}(x \rightarrow y) = \sum_{x \in X} \ov{\mc{P}}(x \leftarrow y) = 1$) and satisfy \eqref{eq_intro_prob_bij_compatibility} than to prove \eqref{eq_intro_x_sums} directly. This is analogous to the fact that in many cases, the easiest (and most useful) way to establish that a map is bijective is not to show that it is injective and surjective, but to explicitly exhibit its inverse. Furthermore, just as a bijection allows for the transportation of combinatorial information between two sets, an explicit description of the ``forward'' and ``backward'' probabilities $\mc{P}(x \rightarrow y)$ and $\ov{\mc{P}}(x \leftarrow y)$ allows for the transportation of probabilistic information (such as a probability distribution) from one set to the other.\footnote{One can imagine the possibility of discovering a bijection by finding a parameter-dependent probabilistic bijection which becomes deterministic for a certain value of the parameter. This is similar in spirit to Kashiwara's combinatorial crystal operators, which are defined as the $q \rightarrow 0$ limit of certain linear maps which depend on $q$ \cite{Kashiwara}.}

Our proof of the up-down commutation relation \eqref{eq_intro_UD} essentially boils down to finding, for each partition $\la$, a probabilistic bijection between the set $\mc{U}(\la)$ of partitions which cover $\la$ in Young's lattice, and the set $\mc{D}^*(\la) = \{\la\} \cup \mc{D}(\la)$, where $\mc{D}(\la)$ is the set of partitions covered by $\la$. We use the ``forward and backward'' approach; that is, we define expressions $\mc{P}_\la(\mu \rightarrow \nu)$ and $\ov{\mc{P}}_{\la}(\mu \leftarrow \nu)$ for $\mu \in \mc{D}^*(\la)$ and $\nu \in \mc{U}(\la)$, in such a way that the compatibility condition \eqref{eq_intro_prob_bij_compatibility} is immediate. The main difficulty is then to prove that the expressions actually sum to 1, which we do by using Lagrange interpolation. In a special case, we are able to interpret the expressions as the probabilities arising from a $(q,t)$-analogue of the Greene--Nijenhuis--Wilf random hook walk \cite{GNW1,GNW2}, thereby giving a more conceptual explanation for the fact that they sum to 1. Our hook walk is modeled on the $(q,t)$-hook walk introduced by Garsia and Haiman \cite{GarHai}.

\subsection{Future directions}
\label{sec_future}

The first problem suggested by our work is to extend $\qrst$ to a probabilistic bijection between nonnegative integer matrices and pairs of semistandard Young tableaux, with weights coming from the Cauchy identity for Macdonald functions. Three of the one-parameter specializations of $\qrst$ (the $q$-deformations of row and column insertion and the $t$-deformation of column insertion) have been extended to $q$-RSK and $t$-RSK algorithms\footnote{To be more precise, the deformations of column insertion extend to deformations of a variant of RSK known as the Burge correspondence; see Remark \ref{rem_RSK_etc}.} \cite{MatveevPetrov17, BufetovMatveev18}, and we hope that these three algorithms will turn out to be specializations of a single $(q,t)$-RSK algorithm. A related problem is to extend $\qrst$ to a ``dual $(q,t)$-RSK,'' which would give a probabilistic bijection for the dual Cauchy identity for Macdonald functions. Two $q$-deformations of dual RSK were constructed in \cite{MatveevPetrov17}; perhaps these can be simultaneously generalized.

In a different direction, one could try to find a probabilistic bijection that explains why the monomial expansion of the Macdonald functions is symmetric in the $\mb{x}$-variables. The symmetry of the Schur functions can be proved bijectively using the Bender--Knuth involutions, or using the Lascoux--Sch\"utzenberger symmetric group action on semistandard Young tableaux. Perhaps one or both of these bijections can be probabilistically generalized to the $(q,t)$-setting. We remark that a probabilistic approach to the symmetry of Hall--Littlewood functions based on the Yang--Baxter equation has been developed in \cite{BufetovPetrov19}; the Schur degeneration of this approach has been used to ``reverse'' the time evolution in the TASEP \cite{PetrovSaenz}.

\subsection{Outline of paper}

In \S \ref{sec: Preliminaries}, we review several approaches to proving the squarefree part of the Cauchy identity \eqref{eq_intro_RS_Cauchy}. We start with the Robinson--Schensted correspondence, proceed to an algebraic proof using up and down operators on Young's lattice, and then explain how Fomin's growth diagrams can be used to ``bijectivize'' the up-down proof, leading to a family of insertion algorithms that generalize Robinson--Schensted. We do not assume that the reader is an expert in any of these topics.

In \S \ref{sec: Macdonald polynomials}, we review basic facts about Macdonald polynomials, and introduce $(q,t)$-up and down operators. In \S \ref{sec: probabilistic bijections}, we give the definition of a probabilistic bijection, and in \S \ref{sec_weighted_sets}-\ref{sec_proofs}, we construct a probabilistic bijection which proves the commutation relation for the $(q,t)$-up and down operators. We give three different formulations of this probabilistic bijection (Definitions \ref{defn_r=0} and \ref{defn_r>0}, Proposition \ref{prop_explicit_formulas}, and Proposition \ref{prop: 3rd version of probabilities}), each of which provides a distinct perspective. In \S \ref{sec_qrst}, we use the probabilistic bijection to define a probabilistic version of growth diagrams, which corresponds to a probabilistic insertion procedure; this is the $(q,t)$-Robinson--Schensted correspondence.

In \S \ref{sec_specs}, we consider the various specializations of $\qrst$ appearing in Figure \ref{fig_spec_chart}: \S \ref{sec_column_insertion} discusses the transformation $(q,t) \mapsto (q^{-1},t^{-1})$ and its relation to column insertion, \S \ref{sec_qWhit_HL} discusses the four $q$-Whittaker and Hall--Littlewood specializations, and \S \ref{sec_q=t_spec} discusses the $q = t$ specialization. In \S \ref{sec_hook_walk}, we review a version of the Greene--Nijenhuis--Wilf hook walk, and show that the $\qrst$ insertion probabilities (but not the re-insertion/bumping probabilities) arise from a $(q,t)$-generalization of this hook walk.

\subsection*{Acknowledgments}

This project grew out of a working group at LaCIM, the combinatorics group at the Universit\'e du Qu\'ebec \`a Montr\`eal, during 2019-2020. We are grateful to all the members of the working group, and especially to Hugh Thomas, Fran\c cois Bergeron, and Steven Karp, for many interesting discussions.

FA acknowledges support from the Austrian Science Fund FWF:  Erwin Schr\"odinger Fellowship J 4387. GF was supported by a CRM-ISM postdoctoral fellowship.

\section{Robinson--Schensted, up and down operators, and growth diagrams}
\label{sec: Preliminaries}

\subsection{The Robinson--Schensted correspondence}
\label{sec: RSK}

A \emph{partition} is a weakly decreasing sequence of nonnegative integers $\la = (\la_1, \ldots, \la_k)$. We say that $\la$ is a partition of the number $|\la| = \lambda_1+\ldots + \lambda_k$, and we write $\la \vdash n$ to indicate that $\la$ is a partition of $n$. We identify the partition $\la$ with its \emph{Young diagram}, which, following the French convention, consists of $\la_1$ boxes in the bottom row, $\la_2$ boxes in the second row from the bottom, etc., with all rows left-justified. We index the boxes, or \emph{cells}, using strictly positive Cartesian coordinates: $c=(x,y) \in (\mathbb{Z}_{>0})^2$ is a cell of the Young diagram of $\lambda$ if $x \leq \lambda_y$. We follow the standard practice of identifying a partition with its Young diagram, and of identifying two partitions which differ only by a sequence of trailing zeroes. We write $\la'$ for the \emph{conjugate} of $\la$, that is, the partition obtained by reflecting the Young diagram of $\la$ in the diagonal $x = y$.

For a cell $c = (x,y) \in \lambda$ we define its \emph{arm-length} $a_\lambda(c)$ and its \emph{leg-length} $\ell_\lambda(c)$ by
\[
a_\lambda(c)= \lambda_y-x, \qquad \qquad
\ell_\lambda(c)=\lambda_x^\prime-y.
\]
The \emph{hook-length} of $c$ is defined by $h_\lambda(c)=a_\lambda(c)+\ell_\lambda(c)+1$. For example, the Young diagram of the partition $\la = (7,6,3,2,1,1)$ is shown below. The cell $c = (2,1)$ has arm-length $a_\la(c) = 5$, leg-length $\ell_\la(c) = 3$, and hook-length $h_\la(c) = 9$.
\begin{center}
\begin{tikzpicture}
\Yboxdim{16 pt}
\Ylinecolour{lightgray}
\tyng(0,0,7,6,3,2,1,1)
\Ylinecolour{black}
\tgyoung(0pt,0pt,:;)
\node at (24pt,8pt) {$c$};
\node at (72pt,8pt) {$a_\lambda(c)$};
\draw[->] (86pt,8pt) -- (106pt,8pt);
\draw[->] (58pt,8pt) -- (38pt,8pt);
\node at (24pt,40pt) {$\ell_\lambda(c)$};
\draw[<-] (24pt,20pt) -- (24pt,30pt);
\draw[->] (24pt,50pt) -- (24pt,60pt);
\end{tikzpicture}
\end{center}
We will make frequent use of the following two quantities associated to $\la$:
\[
n(\la) = \sum_{c \in \la} \ell_\la(c), \quad\quad n'(\la) = \sum_{c \in \la} a_\la(c).
\]
It is easy to see that $n'(\la) = n(\la')$.\\

Let $\lambda$ be a partition. A \emph{semistandard Young tableau} of shape $\lambda$ is a filling of the cells of $\lambda$ with positive integers such that all rows are weakly increasing from left to right, and all columns are strictly increasing from bottom to top. A semistandard Young tableau is called a \emph{standard Young tableau} if its set of entries is precisely $\{1,\ldots,|\lambda|\}$. We denote by $\SSYT(\lambda)$ (resp., $\SYT(\lambda)$) the set of all semistandard (resp., standard) Young tableaux of shape $\lambda$. The \emph{content} of an SSYT $T$ is the sequence $(\mu_1,\mu_2,\ldots)$, where $\mu_i$ is the number of entries in $T$ equal to $i$.

\begin{ex}
The following is a semistandard Young tableau of shape $(5,4,2)$ and content $(2,2,3,3,1)$.
\begin{center}
\young(11233,2344,45)
\end{center}
\end{ex}

Let $\mb{x} = (x_1, x_2, \ldots)$ be an infinite sequence of indeterminates. For an SSYT $T$, we set $\mb{x}^T=x_1^{\mu_1} x_2^{\mu_2} \cdots$, where $(\mu_1,\mu_2,\ldots)$ is the content of $T$. The \emph{Schur function} $s_\lambda(\mb{x})$ associated to the partition $\lambda$ is defined by
\[
s_\lambda(\mathbf{x}) = \sum_{T \in \SSYT(\lambda)} \mathbf{x}^T.
\]
The following identity is a fundamental result in the theory of symmetric functions.

\begin{thm}[Cauchy identity]
For two sequences of indeterminates $\mb{x} = (x_1, x_2, \ldots)$ and $\mb{y} = (y_1, y_2, \ldots)$, we have
\[
\prod_{i,j \geq 1}\frac{1}{1-x_iy_j} = \sum_\lambda s_\lambda(\mathbf{x})s_\lambda(\mathbf{y}),
\]
where the sum is over all partitions $\lambda$.
\end{thm}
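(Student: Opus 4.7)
The plan is to prove the Cauchy identity bijectively via the Robinson--Schensted--Knuth correspondence, which will later serve as the deterministic model underlying the probabilistic generalization developed in this paper. The strategy is to expand both sides as formal power series, identify the combinatorial objects they enumerate, and exhibit a weight-preserving bijection between them.

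First I would expand each factor on the left-hand side as a geometric series:
\[
\prod_{i,j \geq 1} \frac{1}{1-x_i y_j} = \sum_{A} \prod_{i,j \geq 1} (x_i y_j)^{a_{ij}},
\]
where $A = (a_{ij})$ ranges over nonnegative integer matrices with finitely many nonzero entries. This presents the left-hand side as a generating function for such matrices weighted by $\prod_i x_i^{r_i(A)} \prod_j y_j^{c_j(A)}$, with $r_i(A)$ and $c_j(A)$ denoting row and column sums. On the right-hand side, the definition of the Schur function gives
\[
\sum_\lambda s_\lambda(\mathbf{x}) s_\lambda(\mathbf{y}) = \sum_\lambda \sum_{(P,Q) \in \SSYT(\lambda)^2} \mathbf{x}^P \mathbf{y}^Q.
\]
So the identity reduces to constructing a bijection $A \mapsto (P,Q)$ between nonnegative integer matrices with finite support and pairs of semistandard Young tableaux of the same shape, such that the row sums of $A$ match the content of $P$ and the column sums of $A$ match the content of $Q$.

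The RSK algorithm provides exactly such a bijection. The forward map converts $A$ into a biword by listing pairs $(i,j)$ in lexicographic order, each with multiplicity $a_{ij}$, then row-inserts the sequence of second coordinates into an initially empty tableau $P$ while simultaneously recording the newly added cell in $Q$, labeled by the corresponding first coordinate. The main technical obstacle is verifying bijectivity; the standard approach constructs the inverse explicitly via reverse row insertion, locating the maximal entry of $Q$ (with ties broken by rightmost position) to identify the last cell added to $P$, then reverse-bumping through $P$ from that cell to eject a number and recover the last pair of the biword. The key lemmas to establish are that row insertion preserves semistandardness, that reverse bumping is a genuine inverse to forward bumping on a single entry, and that the lexicographic ordering of the biword ensures the recording tableau $Q$ is itself semistandard and compatible with the cell-selection rules of the inverse. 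Together these imply that the two procedures are mutually inverse, and matching the weights on either side then yields the Cauchy identity.
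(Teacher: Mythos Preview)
The paper does not actually prove this theorem; it is stated as background (``a fundamental result in the theory of symmetric functions'') without proof. Your RSK argument is correct and is precisely the approach the paper alludes to in the discussion immediately following the statement and in Remark~\ref{rem_RSK_etc}, where it notes that RSK ``provid[es] a combinatorial proof of the Cauchy identity.''
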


The Cauchy identity can be rewritten in the form
\begin{equation}
\label{eq_Cauchy_bijective}
\sum_{A=(a_{ij})} \prod_{i,j \geq 1}(x_iy_j)^{a_{ij}} = \sum_{P,Q} \mb{x}^P \mb{y}^Q,
\end{equation}
where the sum on the left-hand side runs over all matrices with nonnegative integer entries and finite support, and the sum on the right-hand side runs over all (ordered) pairs of SSYTs of the same shape.

In this paper, we focus on the coefficient of the squarefree monomial $x_1 \cdots x_n y_1 \cdots y_n$ in the Cauchy identity. The coefficient of this monomial on the left-hand side of \eqref{eq_Cauchy_bijective} is the number of $n \times n$ permutation matrices; the coefficient on the right-hand side is the number of pairs of standard Young tableaux of shape $\la$, where $\la$ is a partition of $n$. Thus, if we write $f_\la$ for the number of standard Young tableaux of shape $\la$, then we have the identity
\begin{equation*}
n! = \sum_{\la \vdash n} (f_\la)^2.
\end{equation*}
This identity can be proved combinatorially by the \emph{Robinson--Schensted (RS) correspondence}, which is a bijection between permutations and pairs of standard Young tableaux of the same shape.

We recall Schensted's description of the RS correspondence as a recursive insertion algorithm. The basic building block of the algorithm is the \emph{insertion} of a number $k$ into a row of a semistandard Young tableau, which is defined as follows: if $k$ is greater than or equal to all entries in the row, add $k$ to the end of the row. Otherwise, let $z$ be the left-most entry in the row which is larger than $k$, and replace $z$ with $k$. We say that $z$ is \emph{bumped} out of the row.

Next, define the insertion of $k$ into an SSYT $T$, denoted $T \leftarrow k$, by the following procedure:
\begin{itemize}
\item Insert $k$ into the first (bottom) row of $T$. If no entry is bumped, the process terminates.
\item If $z$ is bumped out of row $i$, insert $z$ into row $i+1$. Continue in this manner until no bumping occurs.
\end{itemize}
Note that the shape of $T \leftarrow k$ differs from the shape of $T$ by the addition of a single cell.

Finally, let $\sigma = \sigma_1 \cdots \sigma_n$ be a permutation written in one-line notation. Let $P$ be the standard Young tableau obtained by successively inserting the numbers $\sigma_1, \ldots, \sigma_n$ into the empty tableau. Let $Q$ be the standard Young tableau that records the growth of $P$ by placing the number $i$ in the cell that was added during the insertion of $\sigma_i$. The tableaux $P$ and $Q$ are called the \emph{insertion tableau} and \emph{recording tableau} of $\sigma$, respectively. One can show that these two tableaux provide enough information to reverse the insertion procedure and recover $\sigma$, so the map $\sigma \mapsto P,Q$ is a bijection. We will refer to this bijection as the row insertion version of Robinson--Schensted.

\begin{ex}
\label{ex: RSK algorithm by insertion}
Let $\sigma = 526134$. The process of constructing $P$ and $Q$ by the successive insertions
\[
\emptyset \leftarrow 5 \leftarrow 2 \leftarrow 6 \leftarrow 1 \leftarrow 3 \leftarrow 4
\]
is shown below.
\begin{center}
\begin{tikzpicture}
\node at (0.2,0.2) {$\emptyset$};
\tyoung(1cm,0cm,5)
\tyoung(2cm,0cm,2,5)
\tyoung(3cm,0cm,26,5)
\tyoung(4.4cm,0cm,16,2,5)
\tyoung(5.8cm,0cm,13,26,5)
\tyoung(7.2cm,0cm,134,26,5)
\node at (9.2,0.6) {$= P$};

\node at (0.2,-1.8) {$\emptyset$};
\tyoung(1cm,-2cm,1)
\tyoung(2cm,-2cm,1,2)
\tyoung(3cm,-2cm,13,2)
\tyoung(4.4cm,-2cm,13,2,4)
\tyoung(5.8cm,-2cm,13,25,4)
\tyoung(7.2cm,-2cm,136,25,4)
\node at (9.2,-1.4) {$= Q$};
\end{tikzpicture}
\end{center}
\end{ex}

There is also a column insertion version of Robinson--Schensted, which provides a different bijection between permutations and pairs of SYTs of the same shape. This differs from the row insertion described above by using columns instead of rows. That is, to insert $k$ into $T$, one inserts $k$ into the first (left-most) column. If $k$ is larger than all the entries in this column, it is added to the top of the column. Otherwise, $k$ bumps the smallest entry $z$ which is larger than $k$, and $z$ is inserted into the second column. The process continues until no bumping occurs. These maps are of course related by conjugation: if $\sigma \mapsto P,Q$ under row insertion, then $\sigma \mapsto P^t, Q^t$ under column insertion, where $P^t$ and $Q^t$ are the standard Young tableaux obtained by reflecting $P$ and $Q$ in the diagonal $x = y$.

\begin{rem}
\label{rem_RSK_etc}
The \emph{Robinson--Schensted--Knuth (RSK) correspondence} generalizes the row insertion version of RS to a bijection between nonnegative integer matrices of finite support and pairs of SSYTs of the same shape, thereby providing a combinatorial proof of the Cauchy identity. The column insertion version of RS can be generalized to a different bijective proof of the Cauchy identity, which is known as the \emph{Burge correspondence} \cite{Burge74}; this is non-trivial because, unlike SYTs, SSYTs cannot be conjugated. Column insertion can be generalized in a different way to \emph{dual RSK}, which provides a bijective proof of the \emph{dual Cauchy identity}. For details on these generalizations, we refer the reader to \cite{Fulton97,EC2}.
\end{rem}

\subsection{Up and down operators}
\label{sec: Up and Down}

In this section, we review a different approach to proving the identity
\begin{equation}
\label{eq: n! = sum over SYT}
n! = \sum_{\lambda \vdash n} \left( f_\lambda \right)^2,
\end{equation}
which is based on a pair of linear operators acting on Young's lattice.

For two partitions $\la$ and $\mu$, we write $\mu \subseteq \la$ if the Young diagram of $\mu$ is contained in the Young diagram of $\la$. We write $\la \cap \mu$ (resp., $\la \cup \mu$) for the partition obtained by intersecting (resp., taking the union of) the Young diagrams of $\la$ and $\mu$. \emph{Young's lattice} is the partial order on partitions defined by the inclusion relation $\subseteq$; its meet and join are given by $\cap$ and $\cup$, respectively.

If $\mu \subseteq \la$, we write $\la/\mu$ for the \emph{skew diagram} which consists of the cells in $\la$ but not in $\mu$. We write $\mu \lessdot \la$ if $\la$ covers $\mu$ in Young's lattice---that is, if $\mu \subseteq \la$ and $|\la/\mu| = 1$---and we define
\[
\D(\lambda) = \{\mu \,|\, \mu \lessdot \lambda\}, \qquad \U(\lambda) = \{\nu \,|\, \nu \gtrdot \lambda\}.
\]
An \emph{inner corner} of $\la$ is a cell $c \in \la$ such that $\la/\mu = \{c\}$ for some $\mu \in \D(\la)$. An \emph{outer corner} of $\la$ is a cell $c \not \in \la$ such that $\nu/\la = \{c\}$ for some $\nu \in \U(\la)$. For example, Figure \ref{fig: map between down and up} shows a partition with inner and outer corners colored red and blue, respectively. We will often identify the elements of $\D(\la)$ and $\U(\la)$ with the corresponding inner and outer corners of $\la$.

Let $\bb{Y}$ denote the set of all partitions, and $\bb{Q} \bb{Y}$ the $\bb{Q}$-vector space with basis $\bb{Y}$. The \emph{up operator} $U$ and \emph{down operator} $D$ are linear maps on $\bb{Q} \bb{Y}$ defined by
\[
U \lambda = \sum_{\nu \in \U(\la)} \nu, \qquad D\lambda = \sum_{\mu \in \D(\la)} \mu.
\] 
These two operators satisfy the commutation relation
\begin{equation}
\label{eq: up down commutator}
DU-UD = I,
\end{equation}
where $I$ is the identity map.

Before explaining why this commutation relation holds, we use it to deduce \eqref{eq: n! = sum over SYT}. Let $\left\langle \cdot, \cdot \right\rangle$ be the inner product on $\bb{Q} \bb{Y}$ defined by $\left\langle \la, \mu \right\rangle= \delta_{\lambda,\mu}$ for $\la,\mu \in \bb{Y}$. On the one hand, we have
\[
\left\langle D^nU^n\emptyset,\emptyset \right\rangle = \sum_{\lambda \vdash n}  \left\langle U^n\emptyset,\lambda \right\rangle \left\langle D^n\lambda, \emptyset \right\rangle.
\]
A standard Young tableau $T$ of shape $\la$ can be viewed as a saturated chain
\[
\emptyset = T^{(0)} \lessdot T^{(1)} \lessdot \cdots \lessdot T^{(n-1)} \lessdot T^{(n)} = \la
\]
in Young's lattice from the empty partition to $\la$, where $T^{(i)}$ is the shape of the subtableau of $T$ consisting of entries less than or equal to $i$, and $n = |\la|$. This implies that $\left\langle U^n\emptyset,\lambda \right\rangle = \left\langle D^n\lambda, \emptyset \right\rangle = f_\lambda$, so $\left\langle D^nU^n\emptyset,\emptyset \right\rangle$ is equal to the right-hand side of \eqref{eq: n! = sum over SYT}.

On the other hand, by repeatedly using the commutation relation to move a $D$ past all the $U$'s, we obtain
\[
D^nU^n = D^{n-1}UDU^{n-1} + D^{n-1}U^{n-1} = \cdots = D^{n-1}U^nD + nD^{n-1}U^{n-1}.
\]
Since $D\emptyset = 0$, we have $\langle D^nU^n \emptyset, \emptyset \rangle = n \langle D^{n-1}U^{n-1} \emptyset, \emptyset \rangle$. By induction, this is equal to $n!$, proving \eqref{eq: n! = sum over SYT}.

To prove the commutation relation \eqref{eq: up down commutator}, we reformulate it as
\begin{align*}
\left\langle DU \lambda,\lambda \right\rangle &= \left\langle UD \lambda,\lambda \right\rangle +1 & \text{ for all } \la,\\
\nonumber \left\langle DU \lambda,\rho \right\rangle &= \left\langle UD \lambda,\rho \right\rangle & \text{ for } \la \neq \rho,
\end{align*}
or, equivalently,
\begin{align}
\label{eq_la_la}
|\U(\lambda)| &= |\D(\la)| + 1 & \text{ for all } \la, \\
\label{eq_la_rho}
|\U(\lambda) \cap \U(\rho)| &= |\D(\lambda) \cap \D(\rho)| & \text{ for } \la \neq \rho.
\end{align}
Equation \eqref{eq_la_la} says that each partition $\la$ has exactly one more outer corner than inner corner, which is easy to see. Equation \eqref{eq_la_rho} is true because if $\la \neq \rho$, then either $\D(\lambda) \cap \D(\rho) = \{\lambda \cap \rho\}$ and $\U(\lambda) \cap \U(\rho) = \{\lambda \cup \rho\}$, or both of these intersections are empty.

Although it is not difficult to prove \eqref{eq_la_la} and \eqref{eq_la_rho}, it turns out to be quite fruitful, as we will see in the next section, to make the proofs of these equations explicitly bijective. For $\la \neq \rho$, this is uninteresting, as there is a unique bijection between two sets of size 0 or 1. For the equation $|\U(\la)| = |\D(\la)| + 1$, we set
\[
\D^*(\la) = \D(\la) \cup \{\la\},
\]
and we choose, for each $\la$, a bijection
\[
F_\lambda: \D^*(\lambda) \rightarrow \U(\lambda).
\]
There are of course many possibilities for $F_\la$, but two choices are particularly natural: the \emph{row insertion bijection} $F_\la^{\textnormal{row}}$, and the \emph{column insertion bijection} $F_\la^{\textnormal{col}}$. The row insertion bijection sends $\la$ to the outer corner in the first row of $\la$, and the inner corner in row $i$ to the outer corner in row $i+1$. The column insertion bijection sends $\la$ to the outer corner in the first column of $\la$, and the inner corner in column $i$ to the outer corner in column $i+1$. Figure \ref{fig: map between down and up} illustrates these two bijections.

\begin{figure}
\begin{center}
\begin{tikzpicture}
\begin{scope}[scale=1.5]
\Ylinecolour{lightgray}
\tyng(0cm,0cm,7,5,5,2,1)
\Yred
\tgyoung(0cm,0cm,::::::;,,::::;,:;,;)
\Ywhite
\Ycyan
\tgyoung(0cm,0cm,:::::::;,:::::;,,::;,:;,;)
\Ywhite
\draw[line width=2pt] (0,0) -- (7*12pt,0) -- (7*12pt,12pt) -- (5*12pt,12pt) -- (5*12pt,3*12pt) -- (2*12pt,3*12pt) -- (2*12pt,4*12pt) -- (1*12pt,4*12pt) -- (1*12pt,5*12pt) -- (0*12pt,5*12pt) -- (0,0) --(7*12pt,0);
\draw[line width=1pt, ->] (.5*12pt,4.5*12pt) -- (.5*12pt,5.5*12pt);
\draw[line width=1pt, ->] (1.5*12pt,3.5*12pt) -- (1.5*12pt,4.5*12pt);
\draw[line width=1pt, ->] (4.5*12pt,2.5*12pt) -- (2.5*12pt,3.5*12pt);
\draw[line width=1pt, ->] (6.5*12pt,.5*12pt) -- (5.5*12pt,1.5*12pt);
\node at (3.5*12pt,-12pt) {$F_{\lambda}^{\textnormal{row}}$};

\begin{scope}[xshift=5cm]
\Ylinecolour{lightgray}
\tyng(0cm,0cm,7,5,5,2,1)
\Yred
\tgyoung(0cm,0cm,::::::;,,::::;,:;,;)
\Ywhite
\Ycyan
\tgyoung(0cm,0cm,:::::::;,:::::;,,::;,:;,;)
\Ywhite
\draw[line width=2pt] (0,0) -- (7*12pt,0) -- (7*12pt,12pt) -- (5*12pt,12pt) -- (5*12pt,3*12pt) -- (2*12pt,3*12pt) -- (2*12pt,4*12pt) -- (1*12pt,4*12pt) -- (1*12pt,5*12pt) -- (0*12pt,5*12pt) -- (0,0) --(7*12pt,0);
\draw[line width=1pt, ->] (.5*12pt,4.5*12pt) -- (1.5*12pt,4.5*12pt);
\draw[line width=1pt, ->] (1.5*12pt,3.5*12pt) -- (2.5*12pt,3.5*12pt);
\draw[line width=1pt, ->] (4.5*12pt,2.5*12pt) -- (5.5*12pt,1.5*12pt);
\draw[line width=1pt, ->] (6.5*12pt,.5*12pt) -- (7.5*12pt,.5*12pt);
\node at (3.5*12pt,-12pt) {$F_{\lambda}^{\textnormal{col}}$};
\end{scope}
\end{scope}
\end{tikzpicture}
\end{center}
\caption{\label{fig: map between down and up} The Young diagram of the partition $\lambda=(7,5,5,2,1)$, with inner corners colored red and outer corners colored blue. The arrows in the left diagram depict the bijection $F_\lambda^{\textnormal{row}}$, and the arrows in the right diagram depict $F_\lambda^{\textnormal{col}}$. In both cases, the outer corner with no arrow pointing to it is the image of $\la$.}
\end{figure}
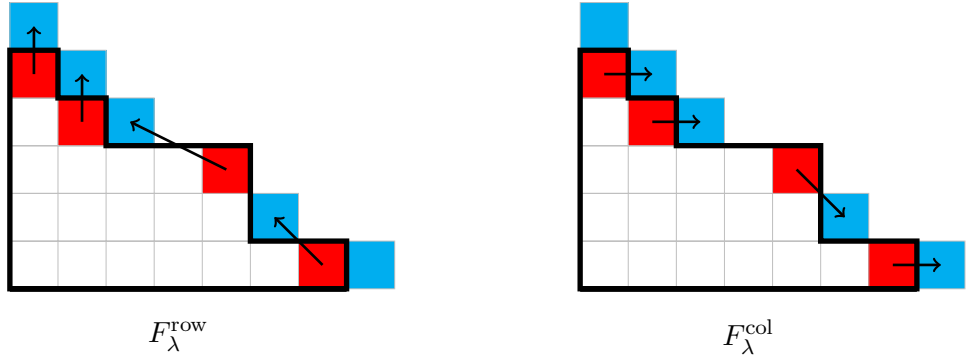

\begin{rem}
The commutation relation $DU - UD = I$ and its application to the enumeration of pairs of standard Young tableaux is the starting point of Stanley's theory of differential posets \cite{Stanley88}. The full Cauchy identity can be proved using a commutation relation for more general up and down operators that add and remove horizontal strips of cells, rather than just single cells \cite{Fomin_Knuth, Gessel93}.
\end{rem}

\subsection{Growth diagrams}
\label{sec: Fomin growth diagrams}

In this section, we review Fomin's growth diagrams \cite{Fomin86, Fomin_DGG_2}, which provide a mechanism for turning a bijective proof of the commutation relation $DU - UD = I$ into a bijective proof of the squarefree part of the Cauchy identity. We explain how each bijective proof arising in this way can be interpreted as an insertion algorithm, with the row and column versions of Robinson--Schensted as special cases. We only consider growth diagrams for permutations; for the generalization to nonnegative integer matrices and the RSK and Burge correspondences, we refer the reader to \cite[\S\S 2.2, 3.1-3.2]{vanLeeuwen05}.

Let $\sigma \in S_n$ be a permutation. We associate to $\sigma$ the permutation matrix $A_\sigma$ which has a 1 in positions $(\sigma(j),j)$, and zeroes elsewhere. We will view $A_\sigma$ as an $n \times n$ grid of squares, and consider labelings of the $(n+1)^2$ vertices of this grid with partitions. For this purpose, we index the vertices by $(i,j)$, with $0 \leq i, j \leq n$, so that the square $(i,j)$ in $A_\sigma$ is surrounded by vertices $(i-1,j-1), (i-1,j), (i,j-1), (i,j)$. (These indices are interpreted as matrix coordinates, rather than Cartesian coordinates.)

\begin{defn}
Fix $\sigma \in S_n$. A \emph{growth associated with $\sigma$} is a labeling $\Lambda = (\Lambda_{ij})$ of the vertices of the grid $A_\sigma$ with partitions, such that
\begin{itemize}
\item $\Lambda_{ij} \subseteq \Lambda_{i,j+1}$ if $j < n$, and $\Lambda_{i,j} \subseteq \Lambda_{i+1,j}$ if $i < n$.
\item $|\Lambda_{ij}|$ is equal to the number of $1$'s in $A_\sigma$ to the northwest of the vertex $(i,j)$.
\end{itemize}
We will often omit reference to $\sigma$ and refer to $\Lambda$ simply as a \emph{growth}.
\end{defn}

This definition is a special case of the notion of ``two-dimensional growth'' introduced by Fomin \cite{Fomin86}.

\begin{ex}
\label{ex_growths_213}
There are four growths associated with the permutation $213 \in S_3$:
\begin{center}
\begin{tikzpicture}[scale=0.6]

	\node at (1.5*1.5,-.5*1.5) {$X$};
	\node at (.5*1.5,-1.5*1.5) {$X$};
	\node at (2.5*1.5,-2.5*1.5) {$X$};
	
	\footnotesize
	
	\foreach \x  in {0,...,3}{
		\draw (0,-1.5*\x) -- (1.5*3,-1.5*\x);
		\draw (1.5*\x,0) -- (1.5*\x,-1.5*3); 
		\node at (-.3,-1.5*\x+0.3) {$\emptyset$};
	}
	\foreach \x in {1,...,3}
		\node at (-.3+1.5*\x,0.3) {$\emptyset$};
		
	\node at (1*1.5-0.3,-1*1.5+0.3) {$\emptyset$};
	\node at (2*1.5-0.3,-1*1.5+0.3) {$1$};
	\node at (3*1.5-0.3,-1*1.5+0.3) {$1$};
	
	\node at (1*1.5-0.3,-2*1.5+0.3) {$1$};
	\node at (2*1.5-0.3,-2*1.5+0.3) {$2$};	
	\node at (3*1.5-0.3,-2*1.5+0.3) {$2$};	
	
	\node at (1*1.5-0.3,-3*1.5+0.3) {$1$};
	\node at (2*1.5-0.3,-3*1.5+0.3) {$2$};
	\node at (3*1.5-0.3,-3*1.5+0.3) {$3$};

	\normalsize
	
	\node at (6+1.5*1.5,-.5*1.5) {$X$};
	\node at (6+.5*1.5,-1.5*1.5) {$X$};
	\node at (6+2.5*1.5,-2.5*1.5) {$X$};
	
	\footnotesize
	
	\foreach \x  in {0,...,3}{
		\draw (6+0,-1.5*\x) -- (6+1.5*3,-1.5*\x);
		\draw (6+1.5*\x,0) -- (6+1.5*\x,-1.5*3); 
		\node at (6+-.3,-1.5*\x+0.3) {$\emptyset$};
	}
	\foreach \x in {1,...,3}
		\node at (6+-.3+1.5*\x,0.3) {$\emptyset$};
		
	\node at (6+1*1.5-0.3,-1*1.5+0.3) {$\emptyset$};
	\node at (6+2*1.5-0.3,-1*1.5+0.3) {$1$};
	\node at (6+3*1.5-0.3,-1*1.5+0.3) {$1$};
	
	\node at (6+1*1.5-0.3,-2*1.5+0.3) {$1$};
	\node at (6+2*1.5-0.3,-2*1.5+0.3) {$2$};	
	\node at (6+3*1.5-0.3,-2*1.5+0.3) {$2$};	
	
	\node at (6+1*1.5-0.3,-3*1.5+0.3) {$1$};
	\node at (6+2*1.5-0.3,-3*1.5+0.3) {$2$};
	\node at (6+3*1.5-0.3,-3*1.5+0.3) {$21$};

	\normalsize
	
	\node at (12+1.5*1.5,-.5*1.5) {$X$};
	\node at (12+.5*1.5,-1.5*1.5) {$X$};
	\node at (12+2.5*1.5,-2.5*1.5) {$X$};
	
	\footnotesize
	
	\foreach \x  in {0,...,3}{
		\draw (12+0,-1.5*\x) -- (12+1.5*3,-1.5*\x);
		\draw (12+1.5*\x,0) -- (12+1.5*\x,-1.5*3); 
		\node at (12+-.3,-1.5*\x+0.3) {$\emptyset$};
	}
	\foreach \x in {1,...,3}
		\node at (12+-.3+1.5*\x,0.3) {$\emptyset$};
		
	\node at (12+1*1.5-0.3,-1*1.5+0.3) {$\emptyset$};
	\node at (12+2*1.5-0.3,-1*1.5+0.3) {$1$};
	\node at (12+3*1.5-0.3,-1*1.5+0.3) {$1$};
	
	\node at (12+1*1.5-0.3,-2*1.5+0.3) {$1$};
	\node at (12+2*1.5-0.3,-2*1.5+0.3) {$11$};	
	\node at (12+3*1.5-0.3,-2*1.5+0.3) {$11$};	
	
	\node at (12+1*1.5-0.3,-3*1.5+0.3) {$1$};
	\node at (12+2*1.5-0.3,-3*1.5+0.3) {$11$};
	\node at (12+3*1.5-0.3,-3*1.5+0.3) {$21$};	
	
	\normalsize

	\node at (18+1.5*1.5,-.5*1.5) {$X$};
	\node at (18+.5*1.5,-1.5*1.5) {$X$};
	\node at (18+2.5*1.5,-2.5*1.5) {$X$};
	
	\footnotesize
	
	\foreach \x  in {0,...,3}{
		\draw (18+0,-1.5*\x) -- (18+1.5*3,-1.5*\x);
		\draw (18+1.5*\x,0) -- (18+1.5*\x,-1.5*3); 
		\node at (18+-.3,-1.5*\x+0.3) {$\emptyset$};
	}
	\foreach \x in {1,...,3}
		\node at (18+-.3+1.5*\x,0.3) {$\emptyset$};
		
	\node at (18+1*1.5-0.3,-1*1.5+0.3) {$\emptyset$};
	\node at (18+2*1.5-0.3,-1*1.5+0.3) {$1$};
	\node at (18+3*1.5-0.3,-1*1.5+0.3) {$1$};
	
	\node at (18+1*1.5-0.3,-2*1.5+0.3) {$1$};
	\node at (18+2*1.5-0.3,-2*1.5+0.3) {$11$};	
	\node at (18+3*1.5-0.3,-2*1.5+0.3) {$11$};	
	
	\node at (18+1*1.5-0.3,-3*1.5+0.3) {$1$};
	\node at (18+2*1.5-0.3,-3*1.5+0.3) {$11$};
	\node at (18+3*1.5-0.4,-3*1.5+0.3) {$111$};	
\end{tikzpicture}
\end{center}

\noindent Here we represent the partitions $\Lambda_{i,j}$ as the concatenation of their parts; e.g., $21$ represents the partition $(2,1)$. Also, for purposes of readability, we represent the 1's in $A_\sigma$ with $X$, and we omit the 0's.
\end{ex}

The following properties of a growth $\Lambda$ are immediate from the definition:
\begin{enumerate}

\item[(a)] Either $\Lambda_{i,j} = \Lambda_{i,j+1}$ or $\Lambda_{i,j} \lessdot \Lambda_{i,j+1}$, and similarly for $\Lambda_{i,j}$ and $\Lambda_{i+1,j}$.

\item[(b)] The permutation $\sigma$ is determined by the partitions $\Lambda_{i,j}$.

\item[(c)] The southeast corner $\Lambda_{n,n}$ is a partition of $n$, and the right column and bottom row of $\Lambda$ form saturated chains from $\emptyset$ to $\Lambda_{n,n}$.

\end{enumerate}

\noindent In light of (c), we define $P(\Lambda)$ and $Q(\Lambda)$ to be the standard Young tableaux of shape $\Lambda_{n,n}$ corresponding to the right column and bottom row of $\Lambda$, respectively. For example, the third growth in Example \ref{ex_growths_213} has $P(\Lambda) = Q(\Lambda) = \young(13,2)$. If $\Lambda$ is associated with $\sigma$ and $P = P(\Lambda), Q = Q(\Lambda)$, we will write
\[
\Lambda : \sigma \rightarrow P,Q
\]
and say that ``$\Lambda$ is a growth from $\sigma$ to $P,Q$.'' In general, there may be multiple growths from $\sigma$ to $P,Q$.

Another property of growths that follows easily from the definition is that each square in the grid has one of the following four types:
\begin{center}
\begin{tikzpicture}
\draw (0,0) -- (0,-1) -- (1,-1) -- (1,0) -- (0,0);
\node at (-0.2,0.2) {$\mu$};
\node at (1.2,0.2) {$\mu$};
\node at (-0.2,-1.2) {$\mu$};
\node at (1.25,-1.25) {$\mu$};
\node at (.5,-.5) {$0$};

\begin{scope}[xshift=3.5cm]
\draw (0,0) -- (0,-1) -- (1,-1) -- (1,0) -- (0,0);
\node at (-0.2,0.2) {$\rho \cap \la$};
\node at (1.2,0.2) {$\rho$};
\node at (-0.2,-1.2) {$\lambda$};
\node at (1.25,-1.25) {$\rho \cup \la$};
\node at (.5,-.5) {$0$};
\node at (.5,-2) {with $\lambda \neq \rho$};
\end{scope}

\begin{scope}[xshift=7cm]
\draw (0,0) -- (0,-1) -- (1,-1) -- (1,0) -- (0,0);
\node at (-0.2,0.2) {$\mu$};
\node at (1.2,0.2) {$\la$};
\node at (-0.2,-1.2) {$\lambda$};
\node at (1.25,-1.25) {$\nu$};
\node at (.5,-.5) {$0$};
\node at (.5,-2) {with $\mu \lessdot \lambda \lessdot \nu$};
\end{scope}

\begin{scope}[xshift=10.5cm]
\draw (0,0) -- (0,-1) -- (1,-1) -- (1,0) -- (0,0);
\node at (-0.2,0.2) {$\la$};
\node at (1.2,0.2) {$\lambda$};
\node at (-0.2,-1.2) {$\lambda$};
\node at (.5,-.5) {$1$};
\node at (1.25,-1.25) {$\nu$};
\node at (.5,-2) {with $\la \lessdot \nu$};
\end{scope}
\end{tikzpicture}
\end{center}
Note that in the second type of square, we either have $\la \lessdot \rho$, $\rho \lessdot \la$, or $\la \cap \rho \lessdot \la, \rho \lessdot \la \cup \rho$.

\begin{defn}
A set of \emph{local growth rules} $F_\bullet$ is a choice of bijection
\[
F_\lambda: \D^*(\la) \rightarrow \U(\la)
\]
for each partition $\la$. A growth $\Lambda$ is an \emph{$F_\bullet$-growth diagram} if all squares of the third type satisfy $\nu = F_\la(\mu)$, and all squares of the fourth type satisfy $\nu = F_\la(\la)$.
\end{defn}

The key property of local growth rules is that they make the process of constructing growths deterministic. More precisely, if $\Lambda$ is an $F_\bullet$-growth diagram and
\begin{center}
\begin{tikzpicture}[scale=0.8]
\draw (0,0) -- (0,-1) -- (1,-1) -- (1,0) -- (0,0);
\node at (-0.2,0.2) {$\mu$};
\node at (1.2,0.2) {$\rho$};
\node at (-0.2,-1.2) {$\la$};
\node at (1.25,-1.25) {$\nu$};
\node at (.5,-.5) {$a$};
\end{tikzpicture}
\end{center}
is a square in $\Lambda$ (so $a \in \{0,1\}$), then $\nu$ is determined by $\mu,\la,\rho,a$, and $\mu,a$ are determined by $\la,\rho,\nu$. This implies that the set of $F_\bullet$-growth diagrams is in bijection with both permutations and pairs of standard Young tableaux of the same shape. Given a permutation $\sigma$, one constructs the unique $F_\bullet$-growth diagram associated with $\sigma$ by filling the top row and left column of the grid with the empty partition, and using the bijections $F_\la$, along with the positions of 1's in the permutation matrix, to recursively fill in the rest of the grid. Similarly, given standard Young tableaux $P,Q$ of the same shape, one fills the right and bottom edges of the grid with the saturated chains in Young's lattice corresponding to $P$ and $Q$, and then recursively fills in the rest of the grid using the bijections $F_\la^{-1}$. Thus, each set of local growth rules induces a different bijective proof of the identity $n! = \sum_{\la \vdash n} (f_\la)^2$.

\begin{rem}
In effect, the framework of growth diagrams transforms a bijective proof of the up-down commutation relation into a bijective proof of the identity $n! = \sum_{\la \vdash n} (f_\la)^2$ by ``bijectivizing'' the algebraic argument given in the previous section.
\end{rem}

We denote by $\RS_{F_\bullet}$ the bijection from permutations to pairs of standard Young tableaux induced by the local growth rules $F_\bullet$. We now explain how to translate $\RS_{F_\bullet}$ into an insertion algorithm. Let $\sigma$ be a permutation, and $\Lambda$ the $F_\bullet$-growth diagram associated to $\sigma$. Let $T^{(0)}, \ldots, T^{(n)}$ be the sequence of partitions in column $j-1$ of $\Lambda$, from top to bottom. This sequence corresponds to a \emph{partial standard Young tableau} $T$, i.e., a tableau with increasing rows and columns and no repeated entries (as in the previous section, $T^{(i)}$ is the shape of the subtableau of $T$ consisting of entries at most $i$). Let $\hat T$ be the partial standard Young tableau corresponding to column $j$ of $\Lambda$. Suppose $\sigma(j) = k$, so that the unique 1 in column $j$ of the permutation matrix $A_\sigma$ occurs between rows $k-1$ and $k$ of $\Lambda$, as shown below.

\begin{center}
\begin{tikzpicture}
\node at (0,0) {$T^{(0)}$};
\node at (0,-1.5) {$T^{(1)}$};
\node at (0,-2*1.5) {$T^{(k-1)}$};
\node at (0,-3*1.5) {$T^{(k)}$};

\node at (1.7,0) {$\hat T^{(0)}$};
\node at (1.7,-1.5) {$\hat T^{(1)}$};
\node at (1.9,-2*1.5) {$\hat T^{(k-1)}$};
\node at (1.7,-3*1.5) {$\hat T^{(k)}$};

\foreach \x in {0,...,3}{
		\draw (0.6,-1.5*\x) -- (1.1,-1.5*\x);
	}
	
\draw (0,-0.45) -- (0,.45-1.5*1);
\draw (0,-0.45-1.5*2) -- (0,.45-1.5*3);
\draw (1.5,-0.4) -- (1.5,.45-1.5*1);
\draw (1.5,-0.4-1.5*2) -- (1.5,.45-1.5*3);

\draw [dotted, thick] (1.5*0.5,-1.5-.3) -- (1.5*0.5,-1.5*2+.3);
\draw [dotted, thick] (1.5*0.5,-1.5*3-.3) -- (1.5*0.5,-1.5*4+.3);
\node at (1.5*0.5,-1.5*0.5) {$0$};
\node at (1.5*0.5,-1.5*2.5) {$1$};
\end{tikzpicture}
\end{center}

\begin{lem}
\label{lem_growth_insertion}
In the situation described above, $\hat T$ is obtained by inserting $k$ into $T$ according to the following algorithm:
\begin{itemize}
\item Let $c$ be the outer corner of $T^{(k)}$ which corresponds to $F_{T^{(k)}}\left(T^{(k)}\right)$. Place $k$ into this cell. If $c$ is unoccupied in $T$, the process terminates.
\item If $c$ is occupied by an entry $z > k$, then $c$ is an inner corner of $T^{(z)}$, and $T^{(z-1)}$ is obtained by removing this inner corner from $T^{(z)}$. Let $c'$ be the outer corner of $T^{(z)}$ corresponding to $F_{T^{(z)}}\left(T^{(z-1)}\right)$, and add $z$ to the cell $c'$. If $c'$ is unoccupied in $T$, the process terminates. Otherwise, repeat this step with the entry $z' > z$ which occupies $c'$.
\end{itemize}
\end{lem}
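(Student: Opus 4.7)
The plan is to verify the lemma square by square along column $j$ of the growth diagram, using the four-type classification of growth squares. The guiding principle is that outside the single row containing a $1$ in column $j$, the growth rules are forced automatically (type 1 or type 2), so all nontrivial information is carried by a single cell that tracks the ``floating'' entry of the algorithm.

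First I would handle the rows above the $1$. Since the $1$ in column $j$ of $A_\sigma$ lies in row $k$, the size formula $|\Lambda_{i,j}| = |\Lambda_{i,j-1}|$ for $i < k$, together with monotonicity $\Lambda_{i,j-1} \subseteq \Lambda_{i,j}$, forces $\hat T^{(i)} = T^{(i)}$ for all $i < k$. For row $k$ itself: since $\sigma$ is a permutation, row $k$ of $A_\sigma$ has no $1$ in columns $< j$, so $T^{(k)} = T^{(k-1)}$; combined with $\hat T^{(k-1)} = T^{(k-1)}$, the square at row $k$ has $\mu = \rho = \la = T^{(k)}$ and contains a $1$, making it a type 4 square. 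The growth rule then gives $\hat T^{(k)} = F_{T^{(k)}}(T^{(k)}) = T^{(k)} \cup \{c\}$, where $c$ is exactly the cell into which the algorithm places $k$ in its first step.

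For rows $i > k$ I would proceed by induction on $i$, maintaining the hypothesis that $\hat T^{(i-1)} = T^{(i-1)} \cup \{d\}$ for a single cell $d$ which corresponds to the cell where the current floating entry of the algorithm resides. The induction splits into two cases. If $d$ is occupied by the entry $i$ in $T$, so that $T^{(i)} = T^{(i-1)} \cup \{d\}$, then the square at row $i$ has $\rho = \la$ and $\mu \lessdot \la$, forcing it to be type 3; the growth rule gives $\hat T^{(i)} = F_\la(\mu) = F_{T^{(i)}}(T^{(i-1)})$, which is precisely the reinsertion step for $z = i$ in the algorithm. Otherwise the square is automatically type 1 or type 2, and one checks directly that $\hat T^{(i)} = T^{(i)} \cup \{d\}$, so $d$ is unchanged.

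The main obstacle is the bookkeeping in the inductive step: one must verify that the cell $d$ in $\hat T^{(i)} \setminus T^{(i)}$ is well-defined throughout and that its evolution in $i$ exactly mirrors the sequence of cells visited by the algorithm. In particular, termination at an entry $z_m$ corresponds to $d$ lying outside $T^{(n)}$ for all $i \ge z_m$, so no further type 3 square can occur and the remaining rows are handled automatically by types 1 and 2. Once this correspondence is pinned down, reading off the final column of the diagram recovers $\hat T$ exactly as produced by the algorithm.
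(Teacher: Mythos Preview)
Your proposal is correct and follows essentially the same approach as the paper's proof: both handle rows $i<k$ trivially, identify row $k$ as the type~4 square giving $\hat T^{(k)}=F_{T^{(k)}}(T^{(k)})$, and then for $i>k$ track the single cell $d=\hat T^{(i-1)}\setminus T^{(i-1)}$, splitting according to whether $d$ coincides with the cell $T^{(i)}\setminus T^{(i-1)}$ (the paper's ``second case,'' your type~3 square) or not (the paper's ``first case,'' your type~2 square). One tiny cosmetic remark: for $i>k$ the square can never be type~1, since $\rho=\hat T^{(i-1)}\neq T^{(i-1)}=\mu$ by your induction hypothesis, so ``type~1 or type~2'' is really just ``type~2.''
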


\begin{proof}
It follows from the definition of growths that $T^{(i)} = \hat T^{(i)} $ for $i < k$, and that $T^{(k-1)} = T^{(k)}$. The partition $\hat T^{(k)}$ is therefore equal to $F_{T^{(k)}}\left(T^{(k)}\right)$; this explains the first step of the algorithm.

For $i>k$, the definition of growths implies that $T^{(i-1)} \lessdot \hat T^{(i-1)}$ and $T^{(i)} \lessdot \hat T^{(i)}$. If $T^{(i-1)} = T^{(i)}$, then since $\hat T^{(i-1)}$ is contained in $\hat T^{(i)}$, we must have $\hat T^{(i-1)} = \hat T^{(i)}$. This says that if $T$ does not have an entry $i$, then neither does $\hat T$. If $T^{(i-1)} \lessdot T^{(i)}$, then there are two possible cases:
\begin{enumerate}
\item $T^{(i)} \neq \hat T^{(i-1)}, \quad \hat T^{(i)} = T^{(i)} \cup \hat T^{(i-1)}$
\item $T^{(i)} = \hat T^{(i-1)}, \quad \hat T^{(i)} = F_{T^{(i)}}\left(T^{(i-1)}\right)$.
\end{enumerate} 
Let $i_0$ be the smallest value of $i > k$ for which the second case occurs (assuming there is such a value). If $i$ is an entry in $T$ which is strictly between $k$ and $i_0$, then the first case must occur; this says that $i$ is not in the cell $c$ into which $k$ was inserted, and $i$ remains in the same location in $\hat T$. The fact that the second case occurs for $i_0$, and not for any smaller value of $i$, means that $i_0$ is located in the cell $c$. Thus, $i_0 = z$, and this number appears in $\hat T$ in the outer corner of $T^{(z)}$ corresponding to the partition $F_{T^{(z)}}\left(T^{(z-1)}\right)$, as claimed. The argument now repeats for $i > z$.
\end{proof}

If $T$ is a partial standard Young tableau and $k$ is a number not appearing in $T$, we define the \emph{$F_\bullet$-insertion} of $k$ into $T$ by the algorithm of Lemma \ref{lem_growth_insertion}. It follows from the preceding discussion that if $\RS_{F_\bullet}(\sigma) = P,Q$, then $P$ can be obtained by the successive $F_\bullet$-insertion of $\sigma_1, \ldots, \sigma_n$ into the empty tableau, and $Q$ records the growth of $P$, just as for usual Robinson--Schensted insertion. In fact, as the reader may verify, the row and column versions of Robinson--Schensted described in \S \ref{sec: RSK} are the special cases of $F_\bullet$-insertion corresponding to the local growth rules $F_\bullet^\row$ and $F_\bullet^\col$ introduced in the previous section.

It follows immediately from the definition of $F_{\bullet}$-growth diagrams that the $F_\bullet$-insertion algorithms have the symmetry property
\[
\RS_{F_\bullet}(\sigma) = P,Q \iff \RS_{F_\bullet}(\sigma^{-1}) = Q,P.
\]
This is not at all obvious from the description of the insertion procedure, even in the case of row or column insertion.

For completeness, we give a simple description of the local growth rules arising from the row insertion bijections $F_\bullet^\row$. For a square

\begin{center}
\begin{tikzpicture}[scale=0.8]
\draw (0,0) -- (0,-1) -- (1,-1) -- (1,0) -- (0,0);
\node at (-0.2,0.2) {$\mu$};
\node at (1.2,0.2) {$\rho$};
\node at (-0.2,-1.2) {$\la$};
\node at (.5,-.5) {$a$};
\end{tikzpicture}
\end{center}

\noindent the southeast vertex $\nu$ is determined by the following rules:

\begin{enumerate}
\item If $\lambda \neq \rho$, then $\nu = \lambda \cup \rho$.
\item If $\la = \rho$ and $\la$ is obtained by adding $1$ to $\mu_i$, then $\nu$ is obtained by adding $1$ to $\lambda_{i+1}$.
\item If $\mu=\la=\rho$, then $\nu=\mu$ if $a = 0$, and $\nu$ is obtained by adding $1$ to $\mu_1$ if $a = 1$.
\end{enumerate}

\begin{ex}
The $F_\bullet^\row$-growth diagram associated to the permutation $\sigma=526134$ is shown below. As in Example \ref{ex_growths_213}, we omit the 0's in the permutation matrix, and write $X$ instead of 1.
\label{ex: Fomin growth diagram}
\begin{center}
\begin{tikzpicture}[scale=0.8]
	\node at (3.5*1.5,-.5*1.5) {$X$};
	\node at (1.5*1.5,-1.5*1.5) {$X$};
	\node at (4.5*1.5,-2.5*1.5) {$X$};
	\node at (5.5*1.5,-3.5*1.5) {$X$};
	\node at (.5*1.5,-4.5*1.5) {$X$};
	\node at (2.5*1.5,-5.5*1.5) {$X$};
	\foreach \x  in {0,...,6}{
		\draw (0,-1.5*\x) -- (1.5*6,-1.5*\x);
		\draw (1.5*\x,0) -- (1.5*\x,-1.5*6); 
		\node at (-.2,-1.5*\x+0.2) {$\emptyset$};
	}
	\foreach \x in {1,...,6}
		\node at (-.2+1.5*\x,0.2) {$\emptyset$};
	\node at (1*1.5-0.2,-1*1.5+0.2) {$\emptyset$};
	\node at (2*1.5-0.2,-1*1.5+0.2) {$\emptyset$};
	\node at (3*1.5-0.2,-1*1.5+0.2) {$\emptyset$};
	\node at (4*1.5-0.2,-1*1.5+0.2) {$1$};
	\node at (5*1.5-0.2,-1*1.5+0.2) {$1$};
	\node at (6*1.5-0.2,-1*1.5+0.2) {$1$};
	
	\node at (1*1.5-0.2,-2*1.5+0.2) {$\emptyset$};
	\node at (2*1.5-0.2,-2*1.5+0.2) {$1$};	
	\node at (3*1.5-0.2,-2*1.5+0.2) {$1$};	
	\node at (4*1.5-0.3,-2*1.5+0.2) {$11$};	
	\node at (5*1.5-0.3,-2*1.5+0.2) {$11$};	
	\node at (6*1.5-0.3,-2*1.5+0.2) {$11$};	
	
	\node at (1*1.5-0.2,-3*1.5+0.2) {$\emptyset$};
	\node at (2*1.5-0.2,-3*1.5+0.2) {$1$};
	\node at (3*1.5-0.2,-3*1.5+0.2) {$1$};
	\node at (4*1.5-0.3,-3*1.5+0.2) {$11$};
	\node at (5*1.5-0.3,-3*1.5+0.2) {$21$};
	\node at (6*1.5-0.3,-3*1.5+0.2) {$21$};
	
	\node at (1*1.5-0.2,-4*1.5+0.2) {$\emptyset$};
	\node at (2*1.5-0.2,-4*1.5+0.2) {$1$};
	\node at (3*1.5-0.2,-4*1.5+0.2) {$1$};
	\node at (4*1.5-0.3,-4*1.5+0.2) {$11$};
	\node at (5*1.5-0.3,-4*1.5+0.2) {$21$};
	\node at (6*1.5-0.3,-4*1.5+0.2) {$31$};
	
	\node at (1*1.5-0.2,-5*1.5+0.2) {$1$};
	\node at (2*1.5-0.3,-5*1.5+0.2) {$11$};
	\node at (3*1.5-0.3,-5*1.5+0.2) {$11$};
	\node at (4*1.5-0.4,-5*1.5+0.2) {$111$};
	\node at (5*1.5-0.4,-5*1.5+0.2) {$211$};
	\node at (6*1.5-0.4,-5*1.5+0.2) {$311$};
	
	\node at (1*1.5-0.2,-6*1.5+0.2) {$1$};
	\node at (2*1.5-0.3,-6*1.5+0.2) {$11$};
	\node at (3*1.5-0.3,-6*1.5+0.2) {$21$};
	\node at (4*1.5-0.4,-6*1.5+0.2) {$211$};
	\node at (5*1.5-0.4,-6*1.5+0.2) {$221$};
	\node at (6*1.5-0.4,-6*1.5+0.2) {$321$};
\end{tikzpicture}
\end{center}
Reading the right column and bottom row of this diagram, we find
\[
P = \young(134,26,5) \;\;, \qquad Q= \young(136,25,4) \;\;,
\]
in agreement with Example \ref{ex: RSK algorithm by insertion}.
\end{ex}

\section{Preliminaries on Macdonald polynomials}
\label{sec: Macdonald polynomials}

\subsection{Monomial expansion of Macdonald polynomials}
\label{sec: Monomial expansion}

We review some basic properties of Macdonald polynomials, following \cite[Ch. VI]{Mac}.

The \emph{Macdonald symmetric functions} $P_\la(\mb{x}; q,t)$ are symmetric functions in variables $\mb{x} = (x_1, x_2, \ldots)$ with coefficients in the field $\bb{Q}(q,t)$ of rational functions in two additional variables $q$ and $t$. They were originally defined as the orthogonal basis obtained by applying the Gram--Schmidt orthogonalization procedure to the basis of monomial symmetric functions (ordered by dominance order), with respect to a certain inner product $\langle \cdot, \cdot \rangle_{q,t}$ that depends on $q$ and $t$. The elements of the basis dual to the $P_\la(\mb{x};q,t)$ with respect to this inner product are denoted by $Q_\la(\mb{x};q,t)$; in other words, $Q_\la$ is proportional to $P_\la$, with proportionality ``constant'' $\langle P_\la, P_\la \rangle_{q,t} \in \bb{Q}(q,t)$ (see \eqref{eq_P_Q_proportional} and the discussion preceding it for an explicit formula for this constant). We will usually refer to the symmetric functions $P_\la$ and $Q_\la$ as \emph{Macdonald polynomials}, even though they are not polynomials over any ring!

Macdonald polynomials generalize many families of symmetric functions. Of importance to this paper are the $q$-Whittaker functions $W_\la(\mathbf{x};q)$, the Hall--Littlewood functions $P_\la(\mathbf{x};t)$, and the Schur functions $s_\la(\mathbf{x})$, which are obtained from the Macdonald polynomials by
\[
W_\la(\mathbf{x};q) = P_\la(\mathbf{x};q,0), \qquad P_\la(\mathbf{x};t) = P_\la(\mathbf{x};0,t), \qquad s_\la(\mathbf{x}) = P_\la(\mathbf{x};q,q).
\]
It follows easily from the definition of the inner product $\langle \cdot, \cdot \rangle_{q,t}$ that the Macdonald polynomials satisfy a generalization of the Cauchy identity for Schur functions.

\begin{thm}[{\cite[Ch. VI (4.13)]{Mac}}]
\label{thm_Cauchy_Mac}
Let $\tb{x} = (x_1, x_2, \ldots)$ and $\tb{y} = (y_1, y_2, \ldots)$ be two sets of variables. Then
\begin{equation}
\label{eq_Cauchy_Mac}
\prod_{i,j \geq 1} \dfrac{(tx_iy_j ; q)_\infty}{(x_iy_j ; q)_\infty} = \sum_\la P_\la(\tb{x};q,t) Q_\la(\tb{y};q,t),
\end{equation}
where $(\alpha;q)_\infty = (1-\alpha)(1-\alpha q)(1-\alpha q^2) \cdots$ is the \emph{infinite $q$-Pochhammer symbol}.
\end{thm}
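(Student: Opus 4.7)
The plan is to follow Macdonald's classical argument: deduce the Cauchy identity from the fact that $\{P_\lambda\}$ and $\{Q_\lambda\}$ are dual bases with respect to the $(q,t)$-inner product $\langle\cdot,\cdot\rangle_{q,t}$ on symmetric functions. The strategy rests on a general duality principle, which says that if $\Lambda_{\mathbb{Q}(q,t)}$ is equipped with an inner product, and $\{u_\lambda\}$, $\{v_\lambda\}$ are any homogeneous bases, then $\{u_\lambda\}$ and $\{v_\lambda\}$ are dual if and only if $\sum_\lambda u_\lambda(\mathbf{x}) v_\lambda(\mathbf{y})$ equals a fixed "reproducing kernel" depending only on the inner product (and not on the chosen bases). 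Once this principle is in place, the identity reduces to identifying the left-hand side of \eqref{eq_Cauchy_Mac} with the reproducing kernel of $\langle\cdot,\cdot\rangle_{q,t}$.

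Concretely, I would proceed in three steps. First, recall that $\langle\cdot,\cdot\rangle_{q,t}$ is defined on the power-sum basis by
\[
\langle p_\lambda, p_\mu \rangle_{q,t} = \delta_{\lambda\mu}\, z_\lambda \prod_i \frac{1-q^{\lambda_i}}{1-t^{\lambda_i}},
\]
where $z_\lambda$ is the usual size of the centralizer of a permutation of cycle type $\lambda$. This immediately shows that $\{p_\lambda\}$ and $\left\{z_\lambda^{-1}\prod_i \frac{1-t^{\lambda_i}}{1-q^{\lambda_i}}\, p_\lambda\right\}$ are dual bases of $\Lambda_{\mathbb{Q}(q,t)}$. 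Second, expand the left-hand side of \eqref{eq_Cauchy_Mac} in power sums via the identity
\[
\log \frac{(tz;q)_\infty}{(z;q)_\infty} \;=\; \sum_{n\geq 1} \frac{z^n}{n}\cdot \frac{1-t^n}{1-q^n},
\]
which one verifies by expanding each $\log(1-q^k z)$ or $\log(1-tq^k z)$ as a power series and summing the resulting geometric series in $q$. Taking $z = x_i y_j$, summing over $i,j$, and exponentiating yields
\[
\prod_{i,j\geq 1} \frac{(tx_iy_j;q)_\infty}{(x_iy_j;q)_\infty}
\;=\; \sum_\lambda \frac{p_\lambda(\mathbf{x})\, p_\lambda(\mathbf{y})}{z_\lambda}\prod_i \frac{1-t^{\lambda_i}}{1-q^{\lambda_i}},
\]
so the left-hand side is precisely the reproducing kernel for $\langle\cdot,\cdot\rangle_{q,t}$ written in the power-sum basis.

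Finally, invoke the duality principle: since $\{P_\lambda\}$ and $\{Q_\lambda\}$ are defined to be dual bases with respect to $\langle\cdot,\cdot\rangle_{q,t}$, we must have $\sum_\lambda P_\lambda(\mathbf{x};q,t)\, Q_\lambda(\mathbf{y};q,t)$ equal to this same kernel, which gives \eqref{eq_Cauchy_Mac}. The duality principle itself is a short bilinear-algebra argument: expanding $u_\lambda$ and $v_\lambda$ in the power-sum basis and computing $\sum_\lambda u_\lambda(\mathbf{x}) v_\lambda(\mathbf{y})$ via the Gram matrix shows that equality with the kernel is equivalent to the Gram matrices of the two bases being inverse to each other, which is exactly the duality condition.

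The main technical step is the power-sum expansion of the Cauchy kernel; this is purely a manipulation of formal $q$-series (a double application of $-\log(1-u)=\sum_{n\geq 1}u^n/n$ together with a geometric series in $q$) and presents no conceptual difficulty. The duality principle and the fact that $P_\lambda, Q_\lambda$ form dual bases are built into the definitions, so there is no real obstacle beyond correctly bookkeeping the $(q,t)$-factors.
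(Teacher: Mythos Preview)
Your argument is correct and is precisely Macdonald's classical proof. Note, however, that the paper does not supply its own proof of this theorem: it is stated with a citation to \cite[Ch.~VI (4.13)]{Mac}, and the surrounding text merely remarks that the identity ``follows easily from the definition of the $(q,t)$-inner product.'' Your write-up fleshes out exactly that remark---the duality/reproducing-kernel argument via power sums---so it is fully in line with what the paper is gesturing at.

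One contextual point worth flagging: later in \S\ref{sec: Monomial expansion} the authors explicitly adopt the monomial expansions of Theorem~\ref{thm_Mac_monomial} as their \emph{definition} of $P_\lambda$ and $Q_\lambda$, and the whole thrust of the paper is to reprove (the squarefree part of) \eqref{eq_Cauchy_Mac} from that combinatorial starting point via the $(q,t)$-up/down operators and probabilistic growth rules. From that viewpoint your proof, while entirely valid, relies on the inner-product characterization rather than the tableau definition, so it is orthogonal to the paper's program; but as a proof of Theorem~\ref{thm_Cauchy_Mac} as stated and cited, it is exactly the intended one.
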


With a good deal of effort, Macdonald was able to derive explicit formulas for the monomial expansions of $P_\la$ and $Q_\la$ as weighted sums over semistandard Young tableaux. To describe these expansions, we need some notation. For a partition $\lambda$ and a cell $c \in \lambda$, define
\[
b_\lambda(c) = \dfrac{1-q^{a_\la(c)}t^{\ell_\la(c)+1}}{1-q^{a_\la(c)+1}t^{\ell_\la(c)}}.
\]
It is useful to view the numerator and denominator of $b_\la(c)$ as two different $(q,t)$-analogues of the hook-length $h_\la(c)$, which both specialize to the $q$-analogue $1-q^{h_\la(c)}$ when $q = t$. For $\mu \subseteq \la$, define
\[
\psi_{\la/\mu}(q,t) = \prod_{c \in \mc{R}_{\la/\mu} - \, \mc{C}_{\la/\mu}} \dfrac{b_\mu(c)}{b_\la(c)}, \qquad\qquad
\vp_{\la/\mu}(q,t) = \prod_{c \in \lambda/\mu}b_\lambda(c) \prod_{c \in \mc{C}_{\la/\mu}} \dfrac{b_\la(c)}{b_\mu(c)},
\]
where $\mc{R}_{\la/\mu}$ is the set of all cells of $\mu$ which are in a row that intersects the skew shape $\la/\mu$, and $\mc{C}_{\la/\mu}$ is the set of all cells of $\mu$ which are in a column that intersects $\la/\mu$.\footnote{In \cite{Mac}, Macdonald defines $\mc{R}_{\la/\mu}$ and $\mc{C}_{\la/\mu}$ to include the cells in $\la/\mu$, which allows him to express $\vp_{\la/\mu}$ in a more compact form. We have found our modified definition of $\mc{R}_{\la/\mu}$ and $\mc{C}_{\la/\mu}$ to be more convenient for the purposes of this paper, even though it requires us to express $\vp_{\la/\mu}$ less compactly.} Note that if $c \in \la - \left(\la/\mu \cup \mc{R}_{\la/\mu} \cup \mc{C}_{\la/\mu}\right)$, then $b_\la(c) = b_\mu(c)$, so the rational functions $\psi_{\la/\mu}$ and $\vp_{\la/\mu}$ are related by
\begin{equation}
\label{eq_psi_vp_relation}
\vp_{\la/\mu}(q,t) = \dfrac{b_\la(q,t)}{b_\mu(q,t)} \psi_{\la/\mu}(q,t),
\end{equation}
where $b_\kappa(q,t) = \prod_{c \in \kappa} b_\kappa(c)$.

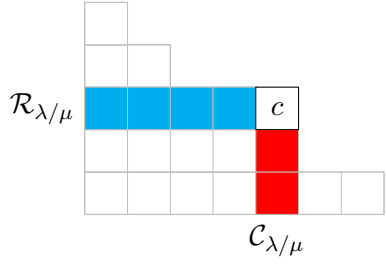
\begin{figure}
\begin{center}
\begin{tikzpicture}
	\Yboxdim{16 pt}
	\Ylinecolour{lightgray}
	\tyng(0,0,7,5,5,2,1)
	\Ycyan \tgyoung(0pt,0pt,,,;;;;)
	\Yred \tgyoung(0pt,0pt,::::;,::::;)
	\Ywhite
	\Ylinecolour{black}
	\tgyoung(0pt,0pt,,,::::;)
	\node at (4.5*16pt,2.5*16pt) {$c$};
	\node at (4.5*16pt,-10pt) {$\mc{C}_{\la/\mu}$};
	\node at (-16pt,2.5*16pt) {$\mc{R}_{\la/\mu}$};
\end{tikzpicture}
\end{center}
\caption{\label{fig: lambda mu} A skew diagram $\la/\mu$ consisting of a single cell $c$. The cells of $\mc{R}_{\lambda/\mu}$ and $\mc{C}_{\lambda/\mu}$ are colored blue and red, respectively.}
\end{figure}

\begin{ex}
For $\la = (7,5,5,2,1)$ and $\mu = (7,5,4,2,1)$ as in Figure \ref{fig: lambda mu}, we have
\[
\psi_{\la/\mu}(q,t) = \dfrac{b_{0,0} b_{2,1}b_{3,2}}{b_{2,0} b_{3,1}b_{4,2}}, \qquad\qquad
\vp_{\la/\mu}(q,t) = \dfrac{b_{0,1} b_{2,2}}{b_{2,1}},
\]
where $b_{i,j}$ is shorthand for $\frac{1-q^it^{j+1}}{1-q^{i+1}t^j}$.
\end{ex}

For a semistandard Young tableau $T$, define rational functions $\psi_T(q,t), \vp_T(q,t)$ by
\[
\psi_T(q,t) = \prod_{i \geq 1} \psi_{T^{(i)}/T^{(i-1)}}(q,t), \qquad\qquad
\vp_T(q,t) = \prod_{i \geq 1} \vp_{T^{(i)}/T^{(i-1)}}(q,t)
\]
where, as in previous sections, $T^{(i)}$ is the shape of the subtableau of $T$ consisting of entries less than or equal to $i$.

\begin{thm}[{\cite[Ch. VI ($7.13,7.13'$)]{Mac}}]
\label{thm_Mac_monomial}
The Macdonald polynomials $P_\la(\tb{x}; q,t)$ and $Q_\la(\tb{x}; q,t)$ have the following monomial expansions over semistandard Young tableaux of shape $\la$:
\[
P_\la (\mathbf{x}; q,t) = \sum_{T \in \SSYT(\la)} \psi_T(q,t) \tb{x}^T, \qquad\qquad
Q_\la (\mathbf{x}; q,t) = \sum_{T \in \SSYT(\la)} \vp_T(q,t) \tb{x}^T.
\]
\end{thm}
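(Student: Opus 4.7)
My plan is to follow Macdonald's classical argument (\cite{Mac}, Ch.~VI), which proceeds via a Pieri-type branching rule. Concretely, the key lemma to establish is the branching rule
\[
P_\la(x_1, \ldots, x_n; q, t) = \sum_\mu \psi_{\la/\mu}(q, t) \, x_n^{|\la/\mu|} \, P_\mu(x_1, \ldots, x_{n-1}; q, t),
\]
where $\mu$ ranges over partitions such that $\la/\mu$ is a horizontal strip. To derive this, I would specialize the $\mb{y}$-variables of the Cauchy identity (Theorem~\ref{thm_Cauchy_Mac}) to a single variable $y$: the LHS becomes $\prod_i (tx_iy;q)_\infty/(x_iy;q)_\infty$, while the RHS collapses to $\sum_{n \geq 0} P_{(n)}(\mb{x}) Q_{(n)}(y)$ because $Q_\la$ vanishes on a single variable unless $\la$ has at most one row. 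This identifies the ``single-variable Cauchy kernel'' as a generating function $\sum_n g_n(\mb{x}) y^n$ with $g_n = Q_{(n)}$. Combined with the orthogonality $\langle P_\la, Q_\mu \rangle_{q,t} = \delta_{\la\mu}$ of the Macdonald inner product and a corresponding skew identity, this yields the Pieri rule $P_\mu \cdot g_r = \sum_\la \vp_{\la/\mu} P_\la$, from which the branching rule for $P_\la$ follows.

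Given the branching rule, the tableau formula is essentially bookkeeping. Iterating it $n$ times starting from $P_\emptyset = 1$ produces
\[
P_\la(x_1, \ldots, x_n; q, t) = \sum_{\emptyset = \la^{(0)} \subseteq \la^{(1)} \subseteq \cdots \subseteq \la^{(n)} = \la} \; \prod_{i=1}^{n} \psi_{\la^{(i)}/\la^{(i-1)}}(q, t) \; x_i^{|\la^{(i)}/\la^{(i-1)}|},
\]
summed over chains in which every step is a horizontal strip. Such chains are precisely SSYTs $T$ of shape $\la$ with entries in $\{1, \ldots, n\}$, via $\la^{(i)} = T^{(i)}$; under this bijection, the product of the $\psi_{\la^{(i)}/\la^{(i-1)}}$ equals $\psi_T$ by definition, and the monomial equals $\mb{x}^T$. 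Passing to infinitely many variables yields the claimed formula for $P_\la$. For $Q_\la$, I would use the standard identity $Q_\la = b_\la(q,t)\, P_\la$ together with the telescoping $b_\la = \prod_i b_{\la^{(i)}}/b_{\la^{(i-1)}}$ (with $b_\emptyset = 1$) and the relation \eqref{eq_psi_vp_relation}: each factor $\psi_{\la^{(i)}/\la^{(i-1)}}$ gets multiplied by $b_{\la^{(i)}}/b_{\la^{(i-1)}}$ to become $\vp_{\la^{(i)}/\la^{(i-1)}}$, so that $b_\la \psi_T = \vp_T$ and the $Q_\la$ formula drops out.

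The hard part is establishing the Pieri/branching rule. This is where all of the analytic content of Macdonald theory lives: the precise definition of the $(q,t)$-inner product, the characterization of $P_\la$ as its unique orthogonal/triangular basis, and the nontrivial fact that the Cauchy kernel $\prod_{i,j}(tx_iy_j;q)_\infty/(x_iy_j;q)_\infty$ is the reproducing kernel of this inner product. Once the Pieri rule is in hand, the passage to SSYTs and the conversion between $P_\la$ and $Q_\la$ are routine combinatorial manipulations.
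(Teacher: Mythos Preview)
Your sketch is a faithful outline of Macdonald's original argument, and that is exactly what is appropriate here: the paper does not give its own proof of this theorem. It is stated as a result from \cite[Ch.~VI (7.13, 7.13$'$)]{Mac}, and immediately afterward the authors declare that they \emph{take Theorem~\ref{thm_Mac_monomial} as the definition} of $P_\la$ and $Q_\la$. So there is no ``paper's own proof'' to compare against; your proposal simply unpacks the cited reference, which is the correct thing to do.
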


It follows from Theorem \ref{thm_Mac_monomial} and \eqref{eq_psi_vp_relation} that the functions $P_\la$ and $Q_\la$ are related by
\begin{equation}
\label{eq_P_Q_proportional}
Q_\la(\mb{x};q,t) = b_\la(q,t)  P_\la(\mb{x};q,t).
\end{equation}

In this paper, we take the somewhat unusual perspective of viewing Theorem \ref{thm_Mac_monomial} as the \emph{definition} of the Macdonald polynomials $P_\la$ and $Q_\la$. The theory of Schur functions can be developed in elegant combinatorial fashion by taking the monomial expansion over semistandard Young tableaux as the starting point (for example, this is Stanley's approach in \cite[Ch. 7]{EC2}). We believe that trying to mimic this approach in the more general Macdonald setting will lead to interesting combinatorial and probabilistic results.

\subsection{Up and down operators for Macdonald polynomials}
\label{sec_qt_up_down}

In \S \ref{sec: Preliminaries}, we saw how the squarefree part of the Cauchy identity for Schur functions can be proved combinatorially, starting from the monomial expansion of the Schur functions. We now take the first steps toward an analogous proof of the squarefree part of the generalized Cauchy identity \eqref{eq_Cauchy_Mac}.

It is a standard exercise (see, e.g., \cite[Ch. VI.2, Ex. 1]{Mac}) to show that
\begin{equation}
\label{eq_Poch}
\dfrac{(tz;q)_\infty}{(z;q)_\infty} = \sum_{k \geq 0} \dfrac{(t;q)_k}{(q;q)_k} z^k,
\end{equation}
where $(\alpha;q)_k = (1-\alpha) (1-\alpha q) \cdots (1-\alpha q^{k-1})$. Using this identity and the monomial expansions of the Macdonald polynomials, one can rewrite the generalized Cauchy identity in the form
\[
\sum_{A = (a_{ij})} \prod_{i,j \geq 1} \dfrac{(t;q)_{a_{ij}}}{(q;q)_{a_{ij}}} (x_iy_j)^{a_{ij}} = \sum_{P,Q} \psi_P(q,t) \vp_Q(q,t) \mb{x}^P \mb{y}^Q,
\]
where the first sum is over nonnegative integer matrices with finite support, and the second sum is over pairs of SSYTs of the same shape. In particular, taking the coefficients of the squarefree monomial $x_1 \cdots x_n y_1 \cdots y_n$, we obtain the identity
\begin{equation}
\label{eq: squarefree Macdonald Cauchy}
\dfrac{(1-t)^n}{(1-q)^n} n! = \sum_{\la \vdash n} \sum_{P,Q} \psi_P(q,t) \vp_Q(q,t),
\end{equation}
where the inner sum is over pairs of standard Young tableaux of shape $\la$.

The identity \eqref{eq: squarefree Macdonald Cauchy} can be proved with the aid of a suitable modification of the up and down operators from \S \ref{sec: Up and Down}. Let $\bb{Q}(q,t) \bb{Y}$ be the vector space over the field of rational functions in $q$ and $t$ with (orthonormal) basis $\bb{Y}$, and define linear operators $U_{q,t}$ and $D_{q,t}$ on $\bb{Q}(q,t) \bb{Y}$ by
\begin{equation}
\label{eq_def_UD_qt}
U_{q,t} \la = \sum_{\nu \in \U(\la)} \psi_{\nu/\la}(q,t) \, \nu, \qquad\qquad
D_{q,t} \la = \sum_{\mu \in \D(\la)} \vp_{\la/\mu}(q,t) \, \mu.
\end{equation}
If $\kappa/\rho$ consists of a single cell, then $\mc{R}_{\kappa/\rho}$ and $\mc{C}_{\kappa/\rho}$ are disjoint, and the cell $\kappa/\rho$ contributes a factor of $\frac{1-t}{1-q}$ to $\vp_{\kappa/\rho}$. Thus, the coefficients in \eqref{eq_def_UD_qt} are given by
\begin{equation}
\label{eq_psi_vp_one_cell}
\psi_{\nu/\la}(q,t) = \prod_{c \in \mc{R}_{\nu/\la}} \dfrac{b_\la(c)}{b_\nu(c)}, \qquad\qquad
\vp_{\la/\mu}(q,t) = \dfrac{1-t}{1-q} \prod_{c \in \mc{C}_{\la/\mu}} \dfrac{b_\la(c)}{b_\mu(c)}.
\end{equation}
 
Taking the monomial expansions of Theorem \ref{thm_Mac_monomial} as the definition of $P_\la$ and $Q_\la$, it follows immediately from \eqref{eq_def_UD_qt} that the right-hand side of \eqref{eq: squarefree Macdonald Cauchy} is equal to $\langle D_{q,t}^nU_{q,t}^n \emptyset, \emptyset \rangle$. Thus, by exactly the same argument given in \S \ref{sec: Up and Down}, \eqref{eq: squarefree Macdonald Cauchy} can be deduced from the following commutation relation.

\begin{thm}
\label{thm: q,t commutator}
The $(q,t)$-up and down operators satisfy the commutation relation
\begin{equation}
\label{eq: q,t commutator}
D_{q,t}U_{q,t}-U_{q,t}D_{q,t} = \frac{1-t}{1-q} I.
\end{equation}
\end{thm}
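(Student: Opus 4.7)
The plan is to reduce the operator equation to numerical identities by examining matrix coefficients $\langle (D_{q,t}U_{q,t} - U_{q,t}D_{q,t})\lambda, \rho\rangle$, and then to verify these in two cases according to whether $\lambda = \rho$. Expanding the definitions \eqref{eq_def_UD_qt},
\[
\langle D_{q,t}U_{q,t}\lambda,\rho\rangle = \sum_{\nu \in \U(\la)\cap\U(\rho)} \psi_{\nu/\la}(q,t)\,\vp_{\nu/\rho}(q,t),
\qquad
\langle U_{q,t}D_{q,t}\lambda,\rho\rangle = \sum_{\mu \in \D(\la)\cap\D(\rho)} \vp_{\la/\mu}(q,t)\,\psi_{\rho/\mu}(q,t),
\]
so I must show that the first sum exceeds the second by $\tfrac{1-t}{1-q}\,\delta_{\la,\rho}$.

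For the off-diagonal case $\la \neq \rho$, as noted in the discussion of \eqref{eq_la_rho}, the two index sets are either both empty (in which case both sides are $0$) or each consist of a single element, namely $\nu = \la \cup \rho$ and $\mu = \la \cap \rho$. In that situation $\la$ and $\rho$ differ by moving a single cell, and I would verify the single-term identity $\psi_{\nu/\la}\vp_{\nu/\rho} = \vp_{\la/\mu}\psi_{\rho/\mu}$ directly from the explicit formulas \eqref{eq_psi_vp_one_cell}. Letting $A$ be the unique cell of $\la/\mu = \nu/\rho$ and $B$ be the unique cell of $\rho/\mu = \nu/\la$, the sets $\mc{R}$ and $\mc{C}$ appearing in each factor are supported on the row and column of either $A$ or $B$, and the ratios $b_\la(c)/b_\mu(c)$ and $b_\nu(c)/b_\rho(c)$ agree cell-by-cell whenever $c$ does not lie in the row or column containing both $A$ and $B$ simultaneously. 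A short case analysis (according to whether $A$ and $B$ are in the same row, same column, or neither) then matches the remaining factors. The factor $\tfrac{1-t}{1-q}$ appears once on each side, so it cancels.

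The substantive case is $\la = \rho$. Using \eqref{eq_psi_vp_one_cell} and cancelling a common factor of $\tfrac{1-t}{1-q}$, this boils down to the rational identity
\[
\sum_{\nu \in \U(\la)} \prod_{c \in \mc{R}_{\nu/\la}} \frac{b_\la(c)}{b_\nu(c)} \prod_{c \in \mc{C}_{\nu/\la}} \frac{b_\nu(c)}{b_\la(c)} \;-\; \sum_{\mu \in \D(\la)} \prod_{c \in \mc{R}_{\la/\mu}} \frac{b_\mu(c)}{b_\la(c)} \prod_{c \in \mc{C}_{\la/\mu}} \frac{b_\la(c)}{b_\mu(c)} \;=\; 1.
\]
I would attack this via Lagrange interpolation, exploiting the fact that inner and outer corners of $\la$ can be indexed by their contents $q^{x-1}t^{y-1}$ (or equivalently by the value $u = q^{x}t^{y}$). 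After rewriting each product in terms of the corner coordinates, each summand becomes a rational function in $q,t$ with simple, corner-controlled poles. A rational function with prescribed simple poles on a finite set and appropriate behavior at infinity is determined by its residues and its value at one point, so if I can (i) identify the poles of an auxiliary function whose value at a suitable point recovers the left-hand side, and (ii) show that the residues at each outer-corner pole and each inner-corner pole match, then the identity follows from a partial-fraction expansion summing to $1$.

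The main obstacle is this diagonal identity: the combinatorics of which cells lie in $\mc{R}$ and $\mc{C}$ as corners shift can become intricate, and one needs a clean bookkeeping scheme so that the Lagrange-interpolation argument carries through uniformly for every $\la$. This is precisely where the paper promises a more conceptual route — interpreting the terms as a probability distribution on $\U(\la) \cup \D^*(\la)$, so that the identity becomes the statement ``these probabilities sum to $1$'' — and I would expect the most natural proof to proceed along those lines (the $(q,t)$-hook walk of Garsia--Haiman being a special case). The off-diagonal case remains a straightforward direct verification once the explicit product formulas \eqref{eq_psi_vp_one_cell} are in hand.
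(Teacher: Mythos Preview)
Your proposal is correct and follows essentially the same route as the paper: the off-diagonal case $\lambda \neq \rho$ is handled by a direct cell-by-cell comparison of the products in \eqref{eq_psi_vp_one_cell} (note that $A$ and $B$ necessarily lie in distinct rows \emph{and} distinct columns, so your case analysis collapses to a single case), and the diagonal case is reduced to the identity \eqref{eq_weights}, which the paper proves by constructing a probabilistic bijection and verifying via Lagrange interpolation that the forward and backward probabilities sum to $1$. The hook-walk interpretation you mention is indeed developed in \S\ref{sec_hook_walk} as an alternative proof of the $\mu=\lambda$ part of the diagonal identity.
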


The main goal of this paper is to give a direct proof of Theorem \ref{thm: q,t commutator}. Reasoning as in \S \ref{sec: Up and Down}, one sees that \eqref{eq: q,t commutator} is equivalent to the identities
\begin{equation}
\label{eq_la_la_qt}
\sum_{\nu \in \U(\la)} \psi_{\nu/\la} \vp_{\nu/\la} = \dfrac{1-t}{1-q} + \sum_{\mu \in \D(\la)} \psi_{\la/\mu} \vp_{\la/\mu}
\end{equation}
for all partitions $\la$, and
\begin{equation}
\label{eq_la_rho_qt}
\sum_{\nu \in \U(\la) \cap \U(\rho)} \psi_{\nu/\la} \vp_{\nu/\rho} = \sum_{\mu \in \D(\la) \cap \D(\rho)} \psi_{\rho/\mu} \vp_{\la/\mu}
\end{equation}
for $\la \neq \rho$ (for notational convenience, we suppress the dependence of the rational functions $\psi$ and $\vp$ on $q,t$.)

We can quickly dispense with \eqref{eq_la_rho_qt}. As noted in \S \ref{sec: Up and Down}, the sets $\U(\la) \cap \U(\rho)$ and $\D(\la) \cap \D(\rho)$ are either empty, or they are the singleton sets consisting of $\la \cup \rho$ and $\la \cap \rho$, respectively. In the former case, there is nothing to prove. The latter case occurs when $\la$ and $\rho$ are the same size and differ by exactly two boxes, which necessarily lie in distinct rows and columns. In this case, set $\nu = \la \cup \rho$ and $\mu = \la \cap \rho$. In light of \eqref{eq_psi_vp_one_cell}, we must show that
\[
\dfrac{1-t}{1-q} \prod_{c \in \mc{R}_{\nu/\la}} \dfrac{b_\la(c)}{b_\nu(c)} \prod_{c \in \mc{C}_{\nu/\rho}} \dfrac{b_\nu(c)}{b_\rho(c)} = \dfrac{1-t}{1-q} \prod_{c \in \mc{R}_{\rho/\mu}} \dfrac{b_\mu(c)}{b_\rho(c)} \prod_{c \in \mc{C}_{\la/\mu}} \dfrac{b_\la(c)}{b_\mu(c)}.
\]
It is clear that $\mc{R}_{\nu/\la} = \mc{R}_{\rho/\mu}$ and $\mc{C}_{\nu/\rho} = \mc{C}_{\la/\mu}$; denote these sets $\mc{R}$ and $\mc{C}$, respectively, and let $\mc{S} = \mc{R} \cap \mc{C}$. Note that $\mc{S}$ is either empty, or consists of a single cell. If $c \in \mc{R} - \mc{S}$, then $b_\la(c) = b_\mu(c)$ and $b_\nu(c) = b_\rho(c)$. Similarly, if $c \in \mc{C} - \mc{S}$, then $b_\nu(c) = b_\la(c)$ and $b_\rho(c) = b_\mu(c)$. Thus, we have
\begin{align*}
\prod_{c \in \mc{R}_{\nu/\la}} \dfrac{b_\la(c)}{b_\nu(c)} \prod_{c \in \mc{C}_{\nu/\rho}} \dfrac{b_\nu(c)}{b_\rho(c)}
&= \prod_{c^* \in \mc{S}} \dfrac{b_\la(c^*)}{b_\rho(c^*)} \prod_{c \in \mc{R} - \mc{S}} \dfrac{b_\la(c)}{b_\nu(c)} \prod_{c \in \mc{C} - \mc{S}} \dfrac{b_\nu(c)}{b_\rho(c)} \\
&= \prod_{c^* \in \mc{S}} \dfrac{b_\la(c^*)}{b_\rho(c^*)} \prod_{c \in \mc{R} - \mc{S}} \dfrac{b_\mu(c)}{b_\rho(c)} \prod_{c \in \mc{C} - \mc{S}} \dfrac{b_\la(c)}{b_\mu(c)}
= \prod_{c \in \mc{R}_{\rho/\mu}} \dfrac{b_\mu(c)}{b_\rho(c)} \prod_{c \in \mc{C}_{\la/\mu}} \dfrac{b_\la(c)}{b_\mu(c)},
\end{align*}
which proves \eqref{eq_la_rho_qt}.

The proof of \eqref{eq_la_la_qt} is much more involved. In effect, we superimpose all the bijections $F_\la : \mc{D}^*(\la) \rightarrow \mc{U}(\la)$ to obtain a set of ``probabilistic local growth rules.'' This is the subject of \S \ref{sec_qt_growth_rules}.

\begin{rem}
As in the Schur case, one can generalize the $(q,t)$-up and down operators to allow for the addition and removal of horizontal strips. The Cauchy identity for Macdonald polynomials is then an immediate consequence of a certain commutation relation for these generalized operators. This commutation relation (which encompasses Theorem \ref{thm: q,t commutator}) is a special case of the skew Cauchy identity for Macdonald polynomials, which, in turn, can be easily derived from the Macdonald Cauchy identity. Thus, the up-down operator approach is really an equivalent formulation of the Macdonald Cauchy identity. The point for us is that the up-down operators come directly from the monomial expansions.
\end{rem}

\section{$(q,t)$-local growth rules}
\label{sec_qt_growth_rules}

In \S \ref{sec: probabilistic bijections}, we define the notion of a probabilistic bijection between weighted sets. In \S \ref{sec_weighted_sets}-\ref{sec_proofs}, we complete the proof of the commutation relation for the $(q,t)$-up and down operators by constructing a probabilistic bijection between the sets $\D^*(\la)$ and $\U(\la)$. In \S \ref{sec_qrst}, we define the $(q,t)$-Robinson--Schensted correspondence by treating this probabilistic bijection as a set of probabilistic local growth rules.

\subsection{Probabilistic bijections}
\label{sec: probabilistic bijections}

The following definition is due to Bufetov and Petrov \cite{BufetovPetrov19}, although they use the term ``bijectivization'' (or ``coupling'') rather than ``probabilistic bijection.'' This notion also plays an important role in \cite{BufetovMatveev18}.
\begin{defn}
\label{def: prob bij}
Let $X$ and $Y$ be finite sets equipped with weight functions $\omega : X \rightarrow k$, $\ov{\omega} : Y \rightarrow k$, where $k$ is a field. A \emph{probabilistic bijection} from $(X,\omega)$ to $(Y,\ov{\omega})$ is a pair of maps $\mc{P},\ov{\mc{P}} : X \times Y \rightarrow k$ satisfying
\begin{enumerate}
\item For each $x \in X$, $\ds \sum_{y \in Y} \mc{P}(x,y) = 1$.
\item For each $y \in Y$, $\ds \sum_{x \in X} \ov{\mc{P}}(x,y) = 1$.
\item For each $x \in X$ and $y \in Y$, $\ds \omega(x)\mc{P}(x,y) = \ov{\mc{P}}(x,y)\ov{\omega}(y)$.
\end{enumerate}
\end{defn}
We will usually write $\mc{P}(x \rightarrow y)$ for $\mc{P}(x,y)$ and $\ov{\mc{P}}(x \leftarrow y)$ for $\ov{\mc{P}}(x,y)$, and think of the former as the ``probability'' of moving ``forward'' from $x$ to $y$, and the latter as the ``probability'' of moving ``backward'' from $y$ to $x$. Thus, (1) says that $\mc{P}$ defines a ``probability distribution'' on $Y$ for each $x$, and (2) says that $\ov{\mc{P}}$ defines a ``probability distribution'' on $X$ for each $y$. We put ``probability'' in quotes because we do not require $\mc{P}(x,y), \ov{\mc{P}}(x,y) \in [0,1]$ (they need not even be real-valued). We refer to (3) as the \emph{compatibility condition}.

\begin{lem}
\label{lem_prob_bij_easy}
If $\mc{P},\ov{\mc{P}}$ is a probabilistic bijection from $(X,\omega)$ to $(Y,\ov{\omega})$, then
\[
\sum_{x \in X} \omega(x) = \sum_{y \in Y} \ov{\omega}(y).
\]
\end{lem}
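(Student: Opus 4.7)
The plan is to carry out a short double-counting argument that chains together the three defining properties of a probabilistic bijection. First I would start with $\sum_{x \in X} \omega(x)$ and, using property (1) of Definition \ref{def: prob bij}, insert a factor of $1 = \sum_{y \in Y} \mc{P}(x \rightarrow y)$ for each $x$. This turns the one-dimensional sum into a double sum $\sum_{x,y} \omega(x)\,\mc{P}(x \rightarrow y)$.

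Next I would apply the compatibility condition (3) termwise, replacing $\omega(x)\,\mc{P}(x \rightarrow y)$ by $\ov{\mc{P}}(x \leftarrow y)\,\ov{\omega}(y)$. Finally I would swap the order of summation and use property (2), $\sum_{x \in X} \ov{\mc{P}}(x \leftarrow y) = 1$, to collapse the $x$-sum and arrive at $\sum_{y \in Y} \ov{\omega}(y)$.

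There is essentially no obstacle here; this is the standard observation from \cite{BufetovPetrov19} that verifying (1)--(3) is strictly stronger than verifying $\sum_x \omega(x) = \sum_y \ov{\omega}(y)$. The only thing worth noting is that the argument takes place in the field $k$, so no positivity of the values $\mc{P}(x,y), \ov{\mc{P}}(x,y)$ is required. In summary, the displayed computation is simply
\[
\sum_{x \in X} \omega(x) \;=\; \sum_{x \in X} \omega(x) \sum_{y \in Y} \mc{P}(x \rightarrow y) \;=\; \sum_{x,y} \ov{\mc{P}}(x \leftarrow y) \, \ov{\omega}(y) \;=\; \sum_{y \in Y} \ov{\omega}(y),
\]
with finiteness of $X$ and $Y$ justifying the interchange of sums.
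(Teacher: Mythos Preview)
Your argument is correct and is essentially identical to the paper's own proof: the paper also inserts $\sum_{y}\mc{P}(x\to y)=1$ via property (1), applies the compatibility condition (3), swaps the order of summation, and collapses the $x$-sum using property (2). Your remarks about working in the field $k$ and about finiteness of $X,Y$ justifying the interchange are accurate and harmless additions.
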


\begin{proof}
Using properties (1), (3), and (2) successively, we compute
\[
\sum_{x \in X} \omega(x) = \sum_{x \in X} \sum_{y \in Y} \omega(x) \mc{P}(x \rightarrow y) = \sum_{y \in Y} \sum_{x \in X} \ov{\mc{P}}(x \leftarrow y) \ov{\omega}(y) = \sum_{y \in Y} \ov{\omega}(y). \qedhere
\]
\end{proof}

The existence of a probabilistic bijection $\mc{P},\ov{\mc{P}}$ also implies the more refined identities
\[
\sum_{x \in X} \omega(x) \mc{P}(x \rightarrow y) = \ov{\omega}(y), \qquad\qquad \sum_{y \in Y} \ov{\mc{P}}(x \leftarrow y) \ov{\omega}(y) = \omega(x).
\]

\begin{rem}
If $\omega(x) = \ov{\omega}(y) = 1$ for all $x \in X$ and $y \in Y$, and $f: X \rightarrow Y$ is a bijection, then we may take $\mc{P}(x \rightarrow y) = \ov{\mc{P}}(x \leftarrow y) = \delta_{y,f(x)}$. Thus, the notion of probabilistic bijection generalizes that of bijection, allowing for situations in which $X$ and $Y$ have different cardinalities, or the same cardinality but differently distributed weight functions, etc.
\end{rem}

\begin{rem}
Suppose $\omega(x), \ov{\omega}(y) \in \bb{R}_{>0}$ for all $x \in X, y \in Y$, so that $\omega$ and $\ov{\omega}$ induce probability distributions $\mc{P}_X$ and $\mc{P}_Y$ on $X$ and $Y$ by
\[
\mc{P}_X(x)=\frac{\omega(x)}{\sum_{x \in X}\omega(x)}, \qquad \mc{P}_Y(y)=\frac{\ov{\omega}(y)}{\sum_{y \in Y}\ov{\omega}(y)}.
\]
Suppose further that
\begin{equation}
\label{eq_equality_of_weighted_sums}
\sum_{x \in X} \omega(x) = \sum_{y \in Y} \ov{\omega}(y).
\end{equation}
In this case, an $\bb{R}_{>0}$-valued probabilistic bijection between $(X,\omega)$ and $(Y,\ov{\omega})$ is equivalent to a joint distribution on $X \times Y$ whose marginal distributions are $\fP_X$ and $\fP_Y$, and whose conditional probabilities are given by the probabilistic bijection. Indeed, if $\fP,\bP$ is a probabilistic bijection, then
\[
Prob(x,y) = \fP_X(x) \fP(x \rightarrow y) = \bP(x \leftarrow y) \fP_Y(y)
\]
defines a joint distribution on $X \times Y$ with marginal distributions $\fP_X, \fP_Y$.

Conversely, a joint distribution $Prob(x,y)$ with marginal distributions $\fP_X,\fP_Y$ determines the probabilistic bijection given by
\[
\fP(x \rightarrow y) = \dfrac{Prob(x,y)}{\fP_X(x)}, \qquad\qquad \bP(x \leftarrow y) = \dfrac{Prob(x,y)}{\fP_Y(y)}.
\]
Note, however, that the compatibility condition for this probabilistic bijection is equivalent to the identity \eqref{eq_equality_of_weighted_sums}, so if our goal is to \emph{prove} \eqref{eq_equality_of_weighted_sums} by exhibiting a probabilistic bijection, the viewpoint of joint distributions is not especially useful.
\end{rem}

\subsection{The weighted sets $(\mc{D}^*(\la), \omega_\la)$ and $(\mc{U}(\la), \ov{\omega}_\la)$}
\label{sec_weighted_sets}

We return now to the identity
\begin{equation}
\label{eq_psi_vp}
\dfrac{1-t}{1-q} + \sum_{\mu \in \D(\la)} \psi_{\la/\mu} \vp_{\la/\mu} = \sum_{\nu \in \U(\la)} \psi_{\nu/\la} \vp_{\nu/\la}.
\end{equation}
Recall that $\D^*(\la) = \D(\la) \cup \{\la\}$. Define the \emph{weight} of $\mu \in \mc{D}^*(\la)$ and $\nu \in \mc{U}(\la)$ by
\[
\omega_\la(\mu) = \begin{cases} \ds \prod_{c \in \mc{R}_{\la/\mu}} \dfrac{b_\mu(c)}{b_\la(c)} \prod_{c \in \mc{C}_{\la/\mu}} \dfrac{b_\la(c)}{b_\mu(c)} & \text{ if } \mu \in \mc{D}(\la) \bigskip \\
1 & \text{ if } \mu = \la, \end{cases} \quad\quad
\ov{\omega}_\la(\nu) = \prod_{c \in \mc{R}_{\nu/\la}} \dfrac{b_\la(c)}{b_\nu(c)} \prod_{c \in \mc{C}_{\nu/\la}} \dfrac{b_\nu(c)}{b_\la(c)},
\]
so that equation \eqref{eq_psi_vp} becomes (after using \eqref{eq_psi_vp_one_cell} and dividing both sides by $\frac{1-t}{1-q}$)
\begin{equation}
\label{eq_weights}
\sum_{\mu \in \mc{D}^*(\la)} \omega_\la(\mu) = \sum_{\nu \in \mc{U}(\la)} \ov{\omega}_\la(\nu).
\end{equation}
The following example shows that, although the sets $\D^*(\la)$ and $\U(\la)$ have the same cardinality, the weights do not allow for a bijective proof of \eqref{eq_weights}.

\begin{ex}
\label{ex_weights_d=1}
Let $\la = (h^v)$ be a rectangular partition. The elements of $\mc{D}^*(\la)$ and $\mc{U}(\la)$ are shown below (for $h = 8, v = 4$), together with their weights. The reader may wish to verify \eqref{eq_weights} in this case.
\begin{center}
\begin{tikzpicture}[scale=0.6]

\draw[dashed] (5.5,7.5) -- (5.5,-0.5);

\draw (-3.8,7) node[left]{$\omega_\la$};
\draw (3,7) node[left]{$\mc{D}^*(\la)$};
\draw (8,7) node[right]{$\mc{U}(\la)$};
\draw (15,7) node[right]{$\ov{\omega}_\la$};

\draw[step=0.5, gray, thin] (0,4) grid (4,6);
\draw (-4.5,5) node{$1$};
\draw (0,4) rectangle (4,6);
\draw (2,3.6) node{$h$};
\draw (-0.4,5) node{$v$};

\draw[step=0.5, gray, thin] (0,0) grid (4,1.5);
\draw[step=0.5, gray, thin] (0,1.5) grid (3.5,2);
\draw (-4.5,1) node{$\dfrac{(1-t^v)(1-q^h)}{(1-qt^{v-1})(1-q^{h-1}t)}$};
\draw (0,0) --(4,0) -- (4,1.5)  -- (3.5,1.5) -- (3.5,2) -- (0,2) -- (0,0);
\draw (2,-0.4) node{$h$};
\draw (-0.4,1) node{$v$};

\draw[step=0.5, gray, thin] (7,4) grid (11,6);
\draw (15.5,5) node{$\dfrac{(1-t^v)(1-q^{h+1}t^{v-1})}{(1-qt^{v-1})(1-q^ht^v)}$};
\draw (7,4) -- (11.5,4) -- (11.5,4.5)  -- (11,4.5) -- (11,6) -- (7,6) -- (7,4);
\draw (9,3.6) node{$h$};
\draw (6.6,5) node{$v$};

\draw[step=0.5, gray, thin] (7,0) grid (11,2);
\draw (15.5,1) node{$\dfrac{(1-q^{h-1}t^{v+1})(1-q^h)}{(1-q^ht^v)(1-q^{h-1}t)}$};
\draw (7,0) -- (11,0) -- (11,2)  -- (7.5,2) -- (7.5,2.5) -- (7,2.5) -- (7,0);
\draw (9,-0.4) node{$h$};
\draw (6.6,1) node{$v$};

\end{tikzpicture}
\end{center}
\end{ex}

Since we cannot prove \eqref{eq_weights} with a bijection, we believe the next best thing is to prove it via a probabilistic bijection (Definition \ref{def: prob bij}). That is, for each $\mu \in \mc{D}^*(\la)$ and $\nu \in \mc{U}(\la)$, we will define ``probabilities'' $\mc{P}_\la(\mu \rightarrow \nu)$ and $\ov{\mc{P}}_\la(\mu \leftarrow \nu)$ such that
\begin{equation}
\label{eq_prob_bij_forward}
\sum_{\nu \in \mc{U}(\la)} \mc{P}_\la(\mu \rightarrow \nu) = 1 \quad\quad\quad \text{ for } \mu \in \mc{D}^*(\la),
\end{equation}
\begin{equation}
\label{eq_prob_bij_backward}
\sum_{\mu \in \mc{D}^*(\la)} \ov{\mc{P}}_\la(\mu \leftarrow \nu) = 1 \quad\quad\quad \text{ for } \nu \in \mc{U}(\la),
\end{equation}
and
\begin{equation}
\label{eq_prob_bij_compatible}
\omega_\la(\mu) \mc{P}_\la(\mu \rightarrow \nu) = \ov{\mc{P}}_\la(\mu \leftarrow \nu) \ov{\omega}_\la(\nu).
\end{equation}

\begin{rem}
We write ``probabilities'' because these expressions will actually be rational functions in $q$ and $t$, rather than numbers in $[0,1]$. However, we will see in Remark \ref{rem_row_probs}(1) that when $q,t \in [0,1)$ or $q,t \in (1,\infty)$, these expressions lie in $[0,1]$, and are thus actual probabilities. For this reason, we will consider the rational functions themselves to be probabilities, even though this is, strictly speaking, an abuse of terminology.
\end{rem}

The main ingredient in defining the probabilities $\mc{P}_\la$ and $\ov{\mc{P}}_\la$ is the following pair of rational expressions. For integers $i,j$, set $[i,j] = [i,j]_{q,t} = 1-q^it^j$.

\begin{defn}
For partitions $\la \lessdot \nu$, define
\begin{align*}
\alpha_{\nu/\la}(q,t) &= \prod_{c \in \mc{R}_{\nu/\la}} \dfrac{[a_\la(c),\ell_\la(c)+1]}{[a_\nu(c),\ell_\nu(c)+1]} \prod_{c \in \mc{C}_{\nu/\la}} \dfrac{[a_\la(c)+1,\ell_\la(c)]}{[a_\nu(c)+1,\ell_\nu(c)]}, \\
\ov{\alpha}_{\nu/\la}(q,t) &= \prod_{c \in \mc{R}_{\nu/\la}} \dfrac{[a_\la(c)+1,\ell_\la(c)]}{[a_\nu(c)+1,\ell_\nu(c)]} \prod_{c \in \mc{C}_{\nu/\la}} \dfrac{[a_\la(c),\ell_\la(c)+1]}{[a_\nu(c),\ell_\nu(c)+1]}.
\end{align*}
\end{defn}

It is clear that
\begin{equation}
\label{eq_half_psi_vp}
\ov{\omega}_\la(\nu) = \dfrac{\alpha_{\nu/\la}(q,t)}{\ov{\alpha}_{\nu/\la}(q,t)}.
\end{equation}
To get an intuitive feel for these expressions, it is helpful to set $q = t$ and take the limit $q \rightarrow 1$. For a partition $\kappa$, let $H_\kappa = \prod_{c \in \kappa} h_\kappa(c)$ be the product of the hook-lengths of $\kappa$.

\begin{lem}
\label{lem_q=t=1}
Suppose $\la \lessdot \nu$. Then
\[
\lim_{q \rightarrow 1} \alpha_{\nu/\la}(q,q) = \lim_{q \rightarrow 1} \ov{\alpha}_{\nu/\la}(q,q) = \dfrac{H_\la}{H_\nu}.
\]
\end{lem}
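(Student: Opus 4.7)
The plan is to exploit the specialization $t = q$ to collapse the two products in $\alpha_{\nu/\la}$ (and in $\ov{\alpha}_{\nu/\la}$) into a single product over $\mc{R}_{\nu/\la} \cup \mc{C}_{\nu/\la}$ of hook-length ratios, and then let $q \to 1$. The key observation is that $[i,j]_{q,q} = 1 - q^{i+j}$, so at $t = q$ both $[a_\la(c), \ell_\la(c)+1]$ and $[a_\la(c)+1, \ell_\la(c)]$ reduce to $1 - q^{h_\la(c)}$. Applying this to both factors of $\alpha_{\nu/\la}$, and to the numerators and denominators simultaneously, yields
\[
\alpha_{\nu/\la}(q,q) = \ov{\alpha}_{\nu/\la}(q,q) = \prod_{c \in \mc{R}_{\nu/\la} \cup \mc{C}_{\nu/\la}} \dfrac{1 - q^{h_\la(c)}}{1 - q^{h_\nu(c)}},
\]
where the union is disjoint since a cell of $\la$ cannot share both its row and column with the unique cell $d$ of $\nu/\la$.

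Next I would pass to the limit $q \to 1$ by applying $\lim_{q \to 1}(1-q^m)/(1-q) = m$ to each factor, giving
\[
\lim_{q \to 1} \alpha_{\nu/\la}(q,q) = \prod_{c \in \mc{R}_{\nu/\la} \cup \mc{C}_{\nu/\la}} \dfrac{h_\la(c)}{h_\nu(c)}.
\]

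The last step is to identify this product with $H_\la / H_\nu$. Since $\nu$ is obtained from $\la$ by adding the single cell $d$, the hook-lengths of $\la$ and $\nu$ agree on every cell of $\la$ that does not lie in the row or column of $d$; for $c \in \mc{R}_{\nu/\la}$ the arm-length of $c$ increases by $1$ (with leg unchanged), and for $c \in \mc{C}_{\nu/\la}$ the leg-length increases by $1$ (with arm unchanged), so $h_\nu(c) = h_\la(c) + 1$ in both cases. Finally, the new cell $d$ is a corner of $\nu$, so $h_\nu(d) = 1$. Taking the ratio of $H_\la = \prod_{c \in \la} h_\la(c)$ and $H_\nu = h_\nu(d) \prod_{c \in \la} h_\nu(c)$ therefore gives exactly $\prod_{c \in \mc{R}_{\nu/\la} \cup \mc{C}_{\nu/\la}} h_\la(c)/h_\nu(c)$, matching the limit above.

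I expect no serious obstacle here: the result is essentially an algebraic identity plus a bookkeeping check on how hook-lengths change when adding a single corner. The only subtlety is being careful that $\mc{R}_{\nu/\la}$ and $\mc{C}_{\nu/\la}$, as defined in the paper, consist of cells of $\la$ (not of $\nu$) lying strictly to the left of, respectively strictly below, the new cell $d$, and that these two sets are disjoint; this ensures the product over $\mc{R} \cup \mc{C}$ has no double counting and covers precisely the cells where hook-lengths change.
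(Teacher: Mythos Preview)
Your proof is correct and follows essentially the same approach as the paper: both specialize $t=q$ so that each factor $[i,j]$ becomes $1-q^{i+j}=1-q^{h(c)}$, collapse the product over $\mc{R}_{\nu/\la}\cup\mc{C}_{\nu/\la}$, take the limit $q\to 1$, and then identify the resulting hook-length ratio with $H_\la/H_\nu$ using that $h_\nu$ of the new cell is $1$ and hooks off the new row/column are unchanged. Your write-up is slightly more explicit (you record the $q$-identity $\alpha_{\nu/\la}(q,q)=\ov{\alpha}_{\nu/\la}(q,q)$ before taking the limit, and spell out the disjointness of $\mc{R}$ and $\mc{C}$), but the argument is the same.
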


\begin{proof}
We have
\[
\lim_{q \rightarrow 1} \alpha_{\nu/\la}(q,q) = \lim_{q \rightarrow 1} \ov{\alpha}_{\nu/\la}(q,q) = \prod_{c \in \mc{R}_{\nu/\la} \cup \, \mc{C}_{\nu/\la}} \dfrac{h_\la(c)}{h_\nu(c)}.
\]
This product is equal to $H_\nu/H_\la$ because $h_\nu(c^*) = 1$ for the single cell $c^* \in \nu/\la$, and $h_\la(c) = h_\nu(c)$ for each $c \in \la$ which is not in the same row or column as $c^*$.
\end{proof}

In light of Lemma \ref{lem_q=t=1}, we view $\alpha_{\nu/\la}(q,t)$ and $\ov{\alpha}_{\nu/\la}(q,t)$ as $(q,t)$-analogues of the ratio $H_\la/H_\nu$. For later use, we record a conjugation symmetry of these $(q,t)$-analogues.

\begin{lem}
\label{lem_conjugation}
We have
\[
\alpha_{\nu/\la}(t,q) = \alpha_{\nu'/\la'}(q,t), \quad\quad\quad \ov{\alpha}_{\nu/\la}(t,q) = \ov{\alpha}_{\nu'/\la'}(q,t).
\]
\end{lem}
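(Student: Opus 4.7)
The plan is to verify both equalities by direct substitution, using three compatible symmetries between a partition and its conjugate.

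First I would record the key symbolic identity: since $[i,j]_{q,t} = 1 - q^i t^j$, swapping $q$ and $t$ gives $[i,j]_{t,q} = 1 - t^i q^j = [j,i]_{q,t}$, i.e.\ the bracket operation just transposes its two arguments when $q$ and $t$ are swapped. Next I would recall the standard conjugation dictionary: the map $c \mapsto c^*$ sending $(x,y) \in \lambda$ to $(y,x) \in \lambda'$ is a bijection, and under it arms and legs are exchanged, so $a_\lambda(c) = \ell_{\lambda'}(c^*)$ and $\ell_\lambda(c) = a_{\lambda'}(c^*)$, and likewise for $\nu$. Finally, since conjugation exchanges rows and columns, the same bijection restricts to bijections $\mc{R}_{\nu/\la} \leftrightarrow \mc{C}_{\nu'/\la'}$ and $\mc{C}_{\nu/\la} \leftrightarrow \mc{R}_{\nu'/\la'}$.

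With these three facts in hand, the verification is a direct index-chase. Applying $q \leftrightarrow t$ inside $\alpha_{\nu/\la}$ converts every factor $[a_\kappa(c),\ell_\kappa(c)+1]$ into $[\ell_\kappa(c)+1,a_\kappa(c)]$, and every factor $[a_\kappa(c)+1,\ell_\kappa(c)]$ into $[\ell_\kappa(c),a_\kappa(c)+1]$, for $\kappa \in \{\la,\nu\}$. On the other side, rewriting $\alpha_{\nu'/\la'}(q,t)$ in terms of the original cells $c$ via $c \leftrightarrow c^*$ turns the product over $\mc{R}_{\nu'/\la'}$ (resp.\ $\mc{C}_{\nu'/\la'}$) into a product over $\mc{C}_{\nu/\la}$ (resp.\ $\mc{R}_{\nu/\la}$), and replaces each $a_{\kappa'}(c^*)$ by $\ell_\kappa(c)$ and each $\ell_{\kappa'}(c^*)$ by $a_\kappa(c)$. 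The resulting two expressions match termwise, which proves the first equality. The argument for $\ov\alpha$ is identical, since $\ov\alpha$ is obtained from $\alpha$ by swapping the roles of the row and column products, and this swap is compatible with the conjugation bijection in exactly the same way.

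The main obstacle is purely bookkeeping: one has to keep track of four products ($\mc{R}$ vs.\ $\mc{C}$, numerator $\la$ vs.\ denominator $\nu$) and ensure that the swap $q \leftrightarrow t$ on the left is correctly matched by the combined transformations (conjugation of cells, exchange of arm/leg, and exchange of row/column sets) on the right. Since each of these ingredients is a straightforward bijection or identity, no deeper input is needed, and the proof should be a short, essentially symbolic computation.
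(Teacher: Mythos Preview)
Your proposal is correct and follows essentially the same approach as the paper: both rely on the identity $[i,j]_{t,q} = [j,i]_{q,t}$ together with the conjugation bijection $c \leftrightarrow c^*$ that swaps arms with legs and $\mc{R}_{\nu/\la}$ with $\mc{C}_{\nu'/\la'}$. The paper simply compresses this into a single displayed equality of products, whereas you spell out the three ingredients separately, but the argument is the same.
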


\begin{proof}
These identities follow from the observation that
\[
\prod_{c \in \mc{R}_{\nu/\la}} \dfrac{[a_\la(c),\ell_\la(c)+1]_{t,q}}{[a_\nu(c),\ell_\nu(c)+1]_{t,q}} = \prod_{c' \in \mc{C}_{\nu'/\la'}} \dfrac{[a_{\la'}(c')+1,\ell_{\la'}(c')]_{q,t}}{[a_{\nu'}(c')+1,\ell_{\nu'}(c')]_{q,t}}. \qedhere
\]
\end{proof}

\subsection{The probabilities}
\label{sec_probabilities}

In this section, we give our first definition of the probabilities $\mc{P}_\la(\mu \rightarrow \nu)$ and $\ov{\mc{P}}_\la(\mu \leftarrow \nu)$, and discuss several enumerative identities resulting from the $q = t \rightarrow 1$ specialization. In \S \ref{sec_formulas} and \S \ref{sec_proofs}, we give two alternative formulations of the probabilities that are better suited for certain purposes, such as explicit calculations.

\subsubsection{The case $\mu = \la$}
\label{sec_mu=la}

We start by defining the probabilities $\mc{P}_\la(\mu \rightarrow \nu)$, $\ov{\mc{P}}_\la(\mu \leftarrow \nu)$ in the case $\mu = \la$. Set $n(\nu/\lambda) = n(\nu) - n(\lambda)$, where $n(\kappa) = \sum_{c \in \kappa} \ell_\kappa(c)$.

\begin{defn}
\label{defn_r=0}
For $\nu \in \mc{U}(\la)$, define
\[
\mc{P}_\la(\la \rightarrow \nu) = t^{n(\nu/\lambda)} \alpha_{\nu/\la}(q,t), \quad\quad\quad \ov{\mc{P}}_\la(\la \leftarrow \nu) = t^{n(\nu/\lambda)} \ov{\alpha}_{\nu/\la}(q,t).
\]
\end{defn}

It follows from \eqref{eq_half_psi_vp} that these expressions satisfy the compatibility condition \eqref{eq_prob_bij_compatible}, and the following result shows that the expressions $\mc{P}_\la(\la \rightarrow \nu)$ are ``probabilities.''

\begin{thm}
\label{thm_r=0}
We have
\[
\sum_{\nu \in \mc{U}(\la)} t^{n(\nu/\lambda)} \alpha_{\nu/\la}(q,t) = 1.
\]
\end{thm}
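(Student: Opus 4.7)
The plan is to prove this summation identity via Lagrange interpolation, as the authors explicitly hint in the introduction when they write that the main difficulty in summing the probability expressions is handled ``by using Lagrange interpolation.''

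First, I would make $\alpha_{\nu/\la}(q,t)$ entirely explicit for $\nu = \la \cup \{c^*\}$ with $c^* = (X,Y)$ an outer corner of $\la$. Since $c^* \notin \la$, no cell of $\la$ lies simultaneously in row $Y$ and column $X$, so $\mc{R}_{\nu/\la}$ and $\mc{C}_{\nu/\la}$ are disjoint: the former consists of $(x,Y)$ for $1 \leq x \leq X-1$ and the latter of $(X,y)$ for $1 \leq y \leq Y-1$. For $c \in \mc{R}_{\nu/\la}$ the arm length increases by one and the leg length is unchanged, while for $c \in \mc{C}_{\nu/\la}$ the opposite happens. Thus
\[
\alpha_{\nu/\la}(q,t) = \prod_{x=1}^{X-1} \dfrac{1 - q^{a_\la(x,Y)} t^{\ell_\la(x,Y)+1}}{1 - q^{a_\la(x,Y)+1} t^{\ell_\la(x,Y)+1}} \prod_{y=1}^{Y-1} \dfrac{1 - q^{a_\la(X,y)+1} t^{\ell_\la(X,y)}}{1 - q^{a_\la(X,y)+1} t^{\ell_\la(X,y)+1}},
\]
and $t^{n(\nu/\la)} = t^{Y-1}$, since adding the cell $(X,Y)$ increments the leg length of each of the $Y-1$ cells of $\la$ in column $X$ by one.

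Next, I would enumerate the outer corners of $\la$ as $(X_0,Y_0), (X_1,Y_1), \ldots, (X_k,Y_k)$ with $X_0 < X_1 < \cdots < X_k$ and $Y_0 > Y_1 > \cdots > Y_k$, and the inner corners (interlaced with these) as $(x_1,y_1),\ldots,(x_k,y_k)$. The arm and leg lengths of a cell $c = (x, Y_i)$ in row $Y_i$ of $\la$ take constant values on each ``strip'' between consecutive corners above row $Y_i$, and jump as $x$ passes the column of such a corner. A similar statement holds for cells in column $X_i$. Telescoping the products accordingly, one expects to obtain
\[
t^{Y_i - 1}\,\alpha_{\nu_i/\la}(q,t) \; = \; \prod_{j \neq i}\dfrac{\beta_j - z_*}{\beta_j - \beta_i}
\]
for appropriate nodes $\beta_i = q^{X_i}t^{Y_i}$ (or a close variant such as $q^{X_i-1}t^{Y_i-1}$) and a suitable evaluation point $z_*$ (expected to be $z_* = 1$ or a boundary value depending on $\la$, so that the ``extra'' outer corner contribution degenerates). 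Once this match is established, the identity becomes the Lagrange partition of unity
\[
\sum_{i=0}^{k} \prod_{j \neq i} \dfrac{\beta_j - z_*}{\beta_j - \beta_i} = 1,
\]
valid for any $z_*$ and distinct nodes $\beta_0, \ldots, \beta_k$, which is Lagrange interpolation of the constant function $1$ evaluated at $z_*$.

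The main obstacle will be the bookkeeping step: verifying that the product over cells in row $Y_i$ and column $X_i$ of $\la$ actually telescopes into the product over corners $j \neq i$ predicted by Lagrange interpolation. This requires tracking precisely how the factors $1 - q^a t^\ell$ collapse as one traverses row $Y_i$ from left to right past the columns of each outer and inner corner $(X_j, Y_j)$ or $(x_j, y_j)$ with $j < i$, and analogously for column $X_i$. The factor $t^{Y_i-1}$ in the summand should be exactly what is needed to make the telescoping land on the claimed Lagrange basis element at $\beta_i$; that it does is strong evidence that the factor of $t^{n(\nu/\la)}$ in Definition \ref{defn_r=0} is the natural one. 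As a sanity check, I would verify the formula against Example \ref{ex_weights_d=1} with $\la = (h^v)$, where both outer corners can be analyzed by hand and $\sum_\nu t^{n(\nu/\la)}\alpha_{\nu/\la}(q,t) = 1$ reduces to a one-line $q,t$-identity.
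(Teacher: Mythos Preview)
Your overall strategy—telescope the product defining $\alpha_{\nu/\la}$ down to a ratio indexed by corners, then apply Lagrange interpolation—is exactly what the paper does. However, the specific shape you anticipate for the telescoped expression is wrong, and this changes which Lagrange identity is needed.

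You expect
\[
t^{Y_i-1}\alpha_{\nu_i/\la}(q,t) \;=\; \prod_{j\neq i}\dfrac{\beta_j - z_*}{\beta_j - \beta_i}
\]
with a \emph{single} evaluation point $z_*$, so that the sum over $i$ is the Lagrange partition of unity for the constant function $1$. But when you actually carry out the telescoping, the numerator does not collapse to a repeated factor $\beta_j-z_*$: each surviving numerator factor records the data of a different \emph{inner} corner of $\la$. In the paper's parameter notation (Proposition~\ref{prop: 3rd version of probabilities}) one finds, with $a_i=q^{x_i}t^{y_i}$ attached to the $d+1$ outer corners and $b_i=q^{x_{i-1}}t^{y_i}$ attached to the $d$ inner corners,
\[
t^{n(\nu_s/\la)}\alpha_{\nu_s/\la}(q,t) \;=\; \dfrac{\prod_{i=1}^d (a_s-b_i)}{\prod_{i\neq s}(a_s-a_i)}.
\]
There is no single $z_*$ here; instead there are $d$ distinct points $b_1,\ldots,b_d$ in the numerator. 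Consequently the relevant Lagrange fact is not interpolation of the constant $1$, but rather: the sum $\sum_s \prod_i(a_s-b_i)/\prod_{i\neq s}(a_s-a_i)$ is the leading coefficient of the degree-$d$ interpolant of the monic polynomial $f(x)=\prod_{i=1}^d(x-b_i)$ at the $d+1$ nodes $a_0,\ldots,a_d$, hence equals $1$.

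So your plan would stall at the ``bookkeeping'' step—not because the telescoping is hard, but because it does not produce the form you are aiming for. Once you replace the single $z_*$ by the family $b_1,\ldots,b_d$ coming from inner corners and swap the partition-of-unity argument for the leading-coefficient argument, your proof becomes the paper's.
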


We will give two proofs of this identity: an algebraic proof via Lagrange interpolation in \S\ref{sec_proofs}, and a probabilistic proof via a $(q,t)$-generalization of the Greene--Nijenhuis--Wilf hook walk in \S\ref{sec_hook_walk_qt}.

\begin{rem}
\label{rem_q=t=1}
In the limit $q = t \rightarrow 1$, Theorem \ref{thm_r=0} becomes (by Lemma \ref{lem_q=t=1}) the identity
\begin{equation}
\label{eq_upper_recursion}
\sum_{\nu \in \mc{U}(\la)} \dfrac{H_\la}{H_\nu} = 1.
\end{equation}
Let $f_\la$ be the number of standard Young tableaux of shape $\la$. Using the hook-length formula $f_\la = |\la|!/H_\la$, we may rewrite \eqref{eq_upper_recursion} as
\[
\sum_{\nu \in \mc{U}(\la)} f_\nu = (n+1) f_\la,
\]
where $n = |\la|$. This is a classical identity known as the \emph{upper recursion} for the numbers $f_\la$, and it has many different proofs. For example, it was proved by Greene, Nijenhuis, and Wilf in \cite{GNW2} using a ``random hook walk'' (see Theorem \ref{thm_GNW} below). It can also be proved by comparing the coefficients of a squarefree monomial on either side of the Pieri rule $h_1 s_\la = \sum_{\nu \in \mc{U}(\la)} s_\nu$.
\end{rem}

\subsubsection{The case $\mu \in \mc{D}(\la)$}
\label{sec_mu_neq_la}

For $\mu \in \mc{D}(\la)$, set
\[
\beta_{\la/\mu}(q,t) = \dfrac{1}{\alpha_{\la/\mu}(q,t)}, \quad\quad \ov{\beta}_{\la/\mu}(q,t) = \dfrac{1}{\ov{\alpha}_{\la/\mu}(q,t)}.
\]
Also set $n'(\la/\mu) = n'(\la) - n'(\mu)$, where $n'(\kappa) = n(\kappa') = \sum_{c \in \kappa} a_\kappa(c)$.

\begin{defn}
\label{defn_r>0}
For $\mu \in \mc{D}(\la)$ and $\nu \in \mc{U}(\la)$, define
\begin{align*}
\mc{P}_\la(\mu \rightarrow \nu) &= t^{n(\nu/\la) - n(\la/\mu)-1} \dfrac{\alpha_{\nu/\la}(q,t) \beta_{\la/\mu}(q,t)}{\gamma_{\nu/\la/\mu}(q,t)}, \\
\ov{\mc{P}}_\la(\mu \leftarrow \nu) &= t^{n(\nu/\la) - n(\la/\mu)-1} \dfrac{\ov{\alpha}_{\nu/\la}(q,t) \ov{\beta}_{\la/\mu}(q,t)}{\gamma_{\nu/\la/\mu}(q,t)},
\end{align*}
where
\[
\gamma_{\nu/\la/\mu}(q,t) = \dfrac{(1-q^{n'(\la/\mu)-n'(\nu/\la)}t^{n(\nu/\la) - n(\la/\mu)})(1-q^{n'(\la/\mu)-n'(\nu/\la)+1}t^{n(\nu/\la) - n(\la/\mu)-1})}{(1-q)(1-t)}.
\]
\end{defn}

It is immediate from \eqref{eq_half_psi_vp} that these expressions satisfy the compatibility condition \eqref{eq_prob_bij_compatible}. Much less obvious is the fact that they are ``probabilities.''

\begin{thm}
\label{thm_r>0}
For $\mu \in \mc{D}(\la)$, we have
\[
\sum_{\nu \in \mc{U}(\la)} \mc{P}_\la(\mu \rightarrow \nu) = 1,
\]
and for $\nu \in \mc{U}(\la)$, we have
\[
\sum_{\mu \in \mc{D}^*(\la)} \ov{\mc{P}}_\la(\mu \leftarrow \nu) = 1.
\]
\end{thm}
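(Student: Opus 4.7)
The strategy is to prove both identities by Lagrange interpolation, as hinted in the introduction. I would focus first on the forward identity $\sum_{\nu \in \mc{U}(\la)} \mc{P}_\la(\mu \to \nu) = 1$. Since $\beta_{\la/\mu}(q,t)$ does not depend on $\nu$, pulling it out reduces the statement to
\[
\sum_{\nu \in \mc{U}(\la)} \dfrac{t^{n(\nu/\la) - n(\la/\mu) - 1}\,\alpha_{\nu/\la}(q,t)}{\gamma_{\nu/\la/\mu}(q,t)} = \alpha_{\la/\mu}(q,t).
\]
Writing $(x_\nu, y_\nu)$ and $(x_\mu, y_\mu)$ for the coordinates of the cells $\nu/\la$ and $\la/\mu$, a direct check gives $n(\nu/\la) = y_\nu - 1$ and $n'(\nu/\la) = x_\nu - 1$. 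Hence
\[
\gamma_{\nu/\la/\mu}(q,t) = \dfrac{(1-q^{x_\mu - x_\nu}t^{y_\nu - y_\mu})(1-q^{x_\mu - x_\nu + 1}t^{y_\nu - y_\mu - 1})}{(1-q)(1-t)},
\]
so that the $\nu$-dependence of the denominator enters only through the monomial $q^{-x_\nu}t^{y_\nu}$.

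The core idea is to introduce an auxiliary variable (for example, by treating $q^{x_\mu}t^{-y_\mu}$ as formally independent of the partition data), in terms of which each summand becomes a rational function with two simple poles per $\nu$. Lagrange interpolation then identifies the sum from its residues together with its asymptotic behavior. The residue computation is the central technical step: using the explicit product formula for $\alpha_{\nu/\la}(q,t)$ in terms of arm- and leg-lengths, one must track how the factors of $\alpha_{\nu/\la}$ interact with the poles coming from $\gamma_{\nu/\la/\mu}$. I expect a careful case-by-case analysis to show that many potential residue contributions either vanish (because of matching zeros in $\alpha_{\nu/\la}$) or telescope across consecutive outer corners of $\la$, leaving a sum that equals the constant $\alpha_{\la/\mu}(q,t)$.

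The backward identity would be proved by an analogous Lagrange interpolation argument, now fixing $\nu$ and summing over $\mu \in \mc{D}^*(\la)$, with $\ov{\alpha}$ playing the role of $\alpha$. The distinguished term $\mu = \la$ from Definition \ref{defn_r=0} has no analogue in the forward identity and must be treated as a boundary contribution that balances the interpolation formula; this is consistent with Theorem \ref{thm_r=0} being the ``degenerate'' case of the same scheme. The main obstacle I anticipate is the pole-residue bookkeeping, especially in situations where the added cell $\nu/\la$ and the removed cell $\la/\mu$ share a row or column: in such cases the arm- and leg-lengths contributing to $\alpha_{\nu/\la}$ and $\ov{\alpha}_{\nu/\la}$ shift in subtle ways that must be matched carefully against the factors of $\gamma_{\nu/\la/\mu}$, and verifying the required cancellations will likely require preliminary lemmas about how $\alpha$ and $\ov{\alpha}$ factor in these special situations.
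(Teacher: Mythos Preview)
Your high-level strategy---Lagrange interpolation---is exactly what the paper uses, and your observation that the $\nu$-dependence of $\gamma_{\nu/\la/\mu}$ enters only through the monomial $q^{-x_\nu}t^{y_\nu}$ is on the right track. However, your proposal is a plan rather than a proof, and it is missing the one technical step that makes the argument work cleanly.

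The paper does \emph{not} proceed by residue analysis, telescoping, or case-by-case study of whether $\nu/\la$ and $\la/\mu$ share a row or column. Instead, it first establishes a compact product formula (Proposition~\ref{prop: 3rd version of probabilities}): writing $a_i = q^{x_i}t^{y_i}$ for the outer-corner monomials and $b_i = q^{x_{i-1}}t^{y_i}$ for the inner-corner monomials (with one exceptional $b_r$), the forward probability becomes
\[
p_{r,s} = \prod_{i \neq s}\frac{b_r - a_i}{a_s - a_i}\prod_{i \neq r}\frac{a_s - b_i}{b_r - b_i}.
\]
Once you have this, the sum $\sum_s p_{r,s}$ is literally the evaluation at $x = b_r$ of the Lagrange interpolant of the degree $d-1$ polynomial $\prod_{i\neq r}(x - b_i)$ at the nodes $a_0,\ldots,a_d$, divided by that same polynomial at $b_r$; the identity $\sum_s p_{r,s} = 1$ is then a one-line consequence of Lemma~\ref{lem: interpolation poly}. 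The backward identity follows similarly by interpolating $\prod_{i\neq s}(x-a_i) - \prod_i(x-b_i)$ at the nodes $b_1,\ldots,b_d$, and the $\mu = \la$ term drops out naturally---no separate ``boundary'' analysis is needed.

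The difficulties you anticipate (pole bookkeeping, shared rows/columns, preliminary lemmas on $\alpha$ and $\ov\alpha$) all dissolve once the probabilities are repackaged in this form; conversely, without that repackaging your residue approach would likely become unmanageable. So the missing ingredient is not the interpolation itself but the preliminary change of variables that puts the summand into interpolation-ready shape.
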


We prove this result using Lagrange interpolation in \S\ref{sec_proofs}. Equation \eqref{eq_half_psi_vp} and Theorems \ref{thm_r=0} and \ref{thm_r>0} show that $\mc{P}_\la$ and $\ov{\mc{P}}_\la$ satisfy \eqref{eq_prob_bij_forward}, \eqref{eq_prob_bij_backward}, and \eqref{eq_prob_bij_compatible}, so they give a probabilistic bijection between the weighted sets $(\mc{D}^*(\la), \omega_\la)$ and $(\mc{U}(\la), \ov{\omega}_\la)$.

As in the case $\mu = \la$, it is instructive to consider what happens to these probabilities in the limit $q = t \rightarrow 1$. Suppose $\mu \in \mc{D}(\la)$ and $\nu \in \mc{U}(\la)$. Let $c_1$ be the cell in the intersection of the row containing the single cell $\nu/\la$ and the column containing the single cell $\la/\mu$, and let $c_2$ be the cell in the intersection of the column containing $\nu/\la$ and the row containing $\la/\mu$. It is clear that exactly one of $c_1$ and $c_2$ is in $\la$; call this cell $c_{\mu,\nu}$. (Note that if the cells $\nu/\la$ and $\la/\mu$ are in the same row or column, then $\la/\mu = \{c_{\mu,\nu}\}$.)

\begin{lem}
\label{lem_gamma_q=t=1}
If $\mu \in \mc{D}(\la)$ and $\nu \in \mc{U}(\la)$, then
\begin{equation}
\label{eq_gamma_q=t=1}
\lim_{q \rightarrow 1} \gamma_{\nu/\la/\mu}(q,q) = (h_\la(c_{\mu,\nu}))^2.
\end{equation}
Thus, we have
\[
\lim_{q \rightarrow 1} \mc{P}_\la(\mu \rightarrow \nu)|_{q,q} = \lim_{q \rightarrow 1} \ov{\mc{P}}_\la(\mu \leftarrow \nu)|_{q,q} = \dfrac{(H_\la)^2}{H_\mu H_\nu} \dfrac{1}{(h_\la(c_{\mu,\nu}))^2}.
\]
\end{lem}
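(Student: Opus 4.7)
The plan is to work through the $q=t$ specialization of $\gamma_{\nu/\la/\mu}(q,t)$ and identify the resulting exponent with a hook-length. At $q=t$, the two factors $1-q^{n'(\la/\mu)-n'(\nu/\la)}t^{n(\nu/\la) - n(\la/\mu)}$ and $1-q^{n'(\la/\mu)-n'(\nu/\la)+1}t^{n(\nu/\la) - n(\la/\mu)-1}$ coincide, both equal to $1-q^{A}$ where
\[
A \;=\; n'(\la/\mu) - n'(\nu/\la) + n(\nu/\la) - n(\la/\mu),
\]
so $\gamma_{\nu/\la/\mu}(q,q) = \bigl(\tfrac{1-q^A}{1-q}\bigr)^2$, whose limit as $q\to 1$ is $A^2$. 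Thus the first statement reduces to showing $|A| = h_\la(c_{\mu,\nu})$.

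To compute $A$, write $\la/\mu = \{(x_1,y_1)\}$ and $\nu/\la = \{(x_2,y_2)\}$. Using the alternative formulas $n(\kappa)=\sum_i\binom{\kappa'_i}{2}$ and $n'(\kappa)=\sum_i\binom{\kappa_i}{2}$, together with the fact that $\la/\mu$ and $\nu/\la$ each differ by a single cell from $\la$, I get
\[
n(\la/\mu) = y_1-1,\quad n'(\la/\mu) = x_1-1,\quad n(\nu/\la) = y_2-1,\quad n'(\nu/\la) = x_2-1.
\]
Hence $A = (x_1-x_2)+(y_2-y_1)$. Now I split into cases according to which of $c_1=(x_1,y_2)$, $c_2=(x_2,y_1)$ is the cell $c_{\mu,\nu}\in\la$. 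If $c_{\mu,\nu}=c_1$, then $c_1\in\la$ forces $x_1\le x_2-1$ and $y_2\le y_1$, so $A\le 0$; a direct computation using $\la_{y_2}=x_2-1$ and $\la'_{x_1}=y_1$ gives $a_\la(c_1)+\ell_\la(c_1)+1 = -A$. Symmetrically, if $c_{\mu,\nu}=c_2$ then $A\ge 0$ and $h_\la(c_2)=A$. Either way $|A|=h_\la(c_{\mu,\nu})$, which gives \eqref{eq_gamma_q=t=1} (and also takes care of the degenerate cases in which the two cells lie in the same row or same column, where $c_1=c_2$ equals the inner corner and $|A|=1$).

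For the second formula, I just combine the pieces. Setting $q=t$ in the expression for $\mc{P}_\la(\mu\to\nu)$, the $t^{n(\nu/\la)-n(\la/\mu)-1}$ prefactor tends to $1$; by Lemma \ref{lem_q=t=1}, $\alpha_{\nu/\la}(q,q)\to H_\la/H_\nu$ and $\beta_{\la/\mu}(q,q) = 1/\alpha_{\la/\mu}(q,q)\to H_\la/H_\mu$; and by \eqref{eq_gamma_q=t=1}, $\gamma_{\nu/\la/\mu}(q,q)\to h_\la(c_{\mu,\nu})^2$. Multiplying yields the desired $(H_\la)^2/(H_\mu H_\nu h_\la(c_{\mu,\nu})^2)$, and the identical argument (using the same asymptotic for $\ov\alpha$ and $\ov\beta$) handles $\ov{\mc{P}}_\la(\mu\leftarrow\nu)$.

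The only step that requires real care is the case analysis identifying $|A|$ with $h_\la(c_{\mu,\nu})$; everything else is bookkeeping. I would organize the write-up as a one-line reduction to $\bigl(\tfrac{1-q^A}{1-q}\bigr)^2$, a short coordinate computation of $A$, and a brief verification of the two cases (plus the degenerate same-row/column cases), followed by assembling the second identity from Lemma \ref{lem_q=t=1} and \eqref{eq_gamma_q=t=1}.
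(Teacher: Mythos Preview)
Your argument is correct and follows essentially the same route as the paper: reduce $\gamma_{\nu/\la/\mu}(q,q)$ to $\bigl((1-q^A)/(1-q)\bigr)^2$, express $A$ in terms of the coordinates of the two cells, and verify $|A|=h_\la(c_{\mu,\nu})$ by the two-case analysis, then assemble the second identity from Lemma~\ref{lem_q=t=1}. One small slip: in your parenthetical about the degenerate same-row/same-column cases, you write ``$c_1=c_2$ equals the inner corner,'' but in fact $c_1\neq c_2$ there (one is the inner corner and the other is the outer corner); your main case split already handles these cases correctly, so just delete or correct that remark.
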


\begin{proof}
It is clear that
\[
\lim_{q \rightarrow 1} \gamma_{\nu/\la/\mu}(q,q) = (n'(\la/\mu) - n'(\nu/\la) + n(\nu/\la) - n(\la/\mu))^2,
\]
so to prove \eqref{eq_gamma_q=t=1}, it suffices to show that
\begin{equation}
\label{eq_abs_value}
h_\la(c_{\mu,\nu}) = |n'(\la/\mu) - n'(\nu/\la) + n(\nu/\la) - n(\la/\mu)|.
\end{equation}
Let $(i_1,j_1)$ and $(i_2,j_2)$ be the (Cartesian) coordinates of the single cells $\nu/\la$ and $\la/\mu$, respectively. These coordinates can be expressed as
\[
\begin{array}{l}
(i_1,j_1) = (n'(\nu/\la)+1, n(\nu/\la)+1), \\
(i_2,j_2)= (n'(\la/\mu)+1, n(\la/\mu)+1).
\end{array}
\]
Observe that $c_{\mu,\nu} = (i_1,j_2)$ if $i_1 \leq i_2$, and $c_{\mu,\nu} = (i_2,j_1)$ if $i_1 > i_2$, so we have
\[
h_\la(c_{\mu,\nu}) = \begin{cases}
(i_2-i_1) + (j_1-j_2-1)+1 & \text{ if } i_1 \leq i_2 \\
(i_1-i_2-1) + (j_2-j_1) + 1 & \text{ if } i_i > i_2
\end{cases}
\]
(the reader may find it helpful to refer to Figure \ref{fig_parameters}, where $c_{\mu,\nu}$ is the cell containing two circles). In both cases \eqref{eq_abs_value} holds, so we have proved \eqref{eq_gamma_q=t=1}; the second assertion of the lemma now follows from Lemma \ref{lem_q=t=1}.
\end{proof}

Combining Theorem \ref{thm_r>0}, Lemmas \ref{lem_q=t=1} and \ref{lem_gamma_q=t=1}, and the hook-length formula, we obtain two interesting identities involving the numbers $f_\la$.

\begin{cor}
\label{cor_r>0_q=t=1}
Let $n = |\la|$. For $\mu \in \mc{D}(\la)$,
\[
\sum_{\nu \in \mc{U}(\la)} \dfrac{f_\mu f_\nu}{(h_\la(c_{\mu,\nu}))^2} = \dfrac{n+1}{n} (f_\la)^2,
\]
and for $\nu \in \mc{U}(\la)$,
\[
\dfrac{f_\la f_\nu}{n} + \sum_{\mu \in \mc{D}(\la)} \dfrac{f_\mu f_\nu}{(h_\la(c_{\mu,\nu}))^2} = \dfrac{n+1}{n} (f_\la)^2.
\]
\end{cor}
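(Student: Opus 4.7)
The plan is to deduce both identities by specializing the probability normalization statements of Theorem \ref{thm_r>0} to $q = t$ and then taking the limit $q \to 1$, using the hook-length formula $f_\kappa = |\kappa|!/H_\kappa$ to translate products of hook lengths into counts of standard Young tableaux. Lemma \ref{lem_gamma_q=t=1} already gives us the limiting form of each $\mc{P}_\la(\mu \to \nu)$ and $\ov{\mc{P}}_\la(\mu \leftarrow \nu)$ for $\mu \in \mc{D}(\la)$, and the $\mu = \la$ case is handled by Definition \ref{defn_r=0} together with Lemma \ref{lem_q=t=1}.

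Concretely, for the first identity, I would start from $\sum_{\nu \in \mc{U}(\la)} \mc{P}_\la(\mu \to \nu) = 1$ with $\mu \in \mc{D}(\la)$, apply $\lim_{q \to 1}$ at $t = q$, and invoke Lemma \ref{lem_gamma_q=t=1} to replace each summand by $(H_\la)^2/(H_\mu H_\nu (h_\la(c_{\mu,\nu}))^2)$. Substituting $H_\la = n!/f_\la$, $H_\mu = (n-1)!/f_\mu$, $H_\nu = (n+1)!/f_\nu$ and simplifying the resulting factorial ratio $(n!)^2/[(n-1)!(n+1)!] = n/(n+1)$ gives exactly the claimed identity after clearing the common factor of $n/[(n+1)(f_\la)^2]$.

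For the second identity, I would start from $\sum_{\mu \in \mc{D}^*(\la)} \ov{\mc{P}}_\la(\mu \leftarrow \nu) = 1$ with $\nu \in \mc{U}(\la)$, and separate the $\mu = \la$ term from the sum over $\mu \in \mc{D}(\la)$. The sum over $\mc{D}(\la)$ is handled exactly as above. The special term $\ov{\mc{P}}_\la(\la \leftarrow \nu) = t^{n(\nu/\la)} \ov{\alpha}_{\nu/\la}(q,t)$ tends to $H_\la/H_\nu = f_\nu / [(n+1)f_\la]$ by Lemma \ref{lem_q=t=1}, which after multiplying the identity through by $(n+1)(f_\la)^2/n$ becomes the $f_\la f_\nu / n$ term in the desired formula.

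The derivation is essentially a bookkeeping exercise once the limits are in place, so there is no real obstacle beyond keeping track of which factorials come from which partition; the conceptual content is entirely packaged in Theorem \ref{thm_r>0} and Lemma \ref{lem_gamma_q=t=1}. One small sanity check worth performing is that the specialization $q = t$ does not cause any of the $\gamma_{\nu/\la/\mu}$ denominators to vanish prematurely before taking $q \to 1$, which is ensured by the squared form of the limit $(h_\la(c_{\mu,\nu}))^2 > 0$ computed in Lemma \ref{lem_gamma_q=t=1}.
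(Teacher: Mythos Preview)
Your proposal is correct and follows exactly the argument indicated in the paper, which simply states that the corollary is obtained by combining Theorem~\ref{thm_r>0}, Lemmas~\ref{lem_q=t=1} and~\ref{lem_gamma_q=t=1}, and the hook-length formula. Your bookkeeping with the factorials and the separate handling of the $\mu=\la$ term via Definition~\ref{defn_r=0} and Lemma~\ref{lem_q=t=1} are precisely the steps the paper leaves implicit.
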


We have not been able to find these identities in the literature. Although they follow from our algebraic proof of Theorem \ref{thm_r>0}, they remain mysterious to us from the combinatorial and probabilistic points of view. We believe these identities---and their $q$-analogues discussed in Remark \ref{rem_ps_Schur}---deserve further study.

\subsection{Explicit formulas for the probabilities}
\label{sec_formulas}

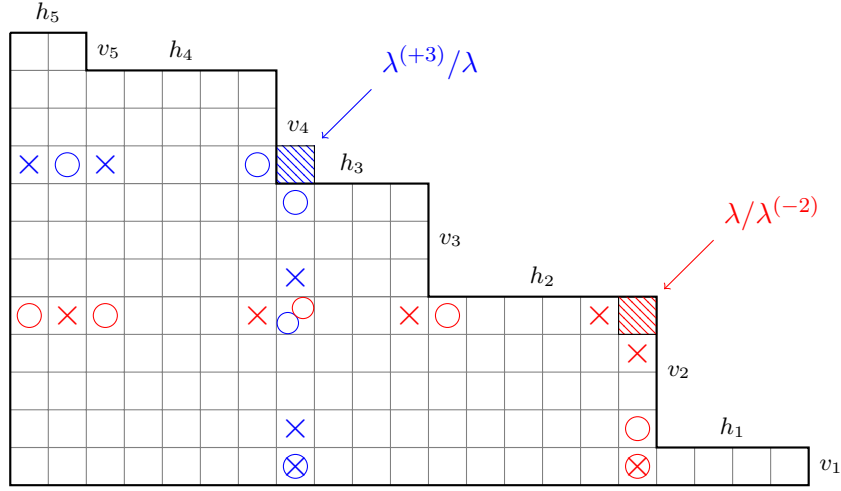
\begin{figure}
\begin{center}
\begin{tikzpicture}[scale=0.5]

\draw[gray, thin] (-5,-1) grid (-3,-2);
\draw[gray, thin] (-5,-2) grid (2,-5);
\draw[gray, thin] (-5,-5) grid (6,-8);
\draw[gray, thin] (-5,-8) grid (12,-12);
\draw[gray, thin] (-5,-12) grid (16,-13);

\draw[thick] (-5,-1) -- (-5,-13) -- (16,-13) --node[right]{\footnotesize $v_1$} (16,-12) --node[above]{\footnotesize $h_1$} (12,-12) --node[right]{\footnotesize $v_2$} (12,-8) --node[above]{\footnotesize $h_2$} (6,-8) --node[right]{\footnotesize $v_3$} (6,-5) --node[above]{\footnotesize $h_3$} (2,-5) --node[right]{\footnotesize $v_4$} (2,-2) --node[above]{\footnotesize $h_4$} (-3,-2) --node[right]{\footnotesize $v_5$}  (-3,-1) --node[above]{\footnotesize $h_5$} (-5,-1);

\draw[pattern=north west lines, pattern color=blue] (2,-5) rectangle (3,-4);
\draw[pattern=north west lines, pattern color=red] (11,-9) rectangle (12,-8);

\draw[color = blue, ->] (4.5,-2.5) node[above right]{$\la^{(+3)}/\la$} -- (3.2,-3.8);
\draw[color = red, ->] (13.5,-6.5) node[above right]{$\la/\la^{(-2)}$} -- (12.2,-7.8);

\draw[color = blue] (1.5,-4.5) circle[radius = 9pt];
\draw[color = blue] (-2.5,-4.5) node{\Large{$\times$}};
\draw[color = blue] (-3.5,-4.5) circle[radius = 9pt];
\draw[color = blue] (-4.5,-4.5) node{\Large{$\times$}};

\draw[color = blue] (2.5,-5.5) circle[radius=9pt];
\draw[color = blue] (2.5,-7.5) node{\Large{$\times$}};
\draw[color = blue] (2.3,-8.7) circle[radius=8pt];
\draw[color = blue] (2.5,-11.5) node{\Large{$\times$}};
\draw[color = blue] (2.5,-12.5) circle[radius=9pt] node{\Large{$\times$}};

\draw[color = red] (10.5,-8.5) node{\Large{$\times$}};;
\draw[color = red] (6.5,-8.5) circle[radius=9pt];
\draw[color = red] (5.5,-8.5) node{\Large{$\times$}};;
\draw[color = red] (2.7,-8.3) circle[radius=8pt];
\draw[color = red] (1.5,-8.5) node{\Large{$\times$}};;
\draw[color = red] (-2.5,-8.5) circle[radius=9pt];
\draw[color = red] (-3.5,-8.5) node{\Large{$\times$}};;
\draw[color = red] (-4.5,-8.5) circle[radius=9pt];

\draw[color = red] (11.5,-9.5) node{\Large{$\times$}};
\draw[color = red] (11.5,-11.5) circle[radius=9pt];
\draw[color = red] (11.5,-12.5) circle[radius=9pt] node{\Large{$\times$}};

\end{tikzpicture}
\end{center}

\caption{The Young diagram of the partition $\la = (21,17^4,11^3,7^3,2)$, whose parameters are $h = (4,6,4,5,2)$ and $v = (1,4,3,3,1)$. The partition $\nu = \la^{(+3)}$ is formed by adding the shaded blue box to $\la$, and $\mu = \la^{(-2)}$ is formed by removing the shaded red box from $\la$. The proof of Proposition \ref{prop_explicit_formulas} shows that the cells containing a blue circle (resp., a blue $\times$) contribute a factor $1-q^it^j$ to the numerator (resp., denominator) of $\alpha_3$ and $\ov{\alpha}_3$. Similarly, the cells containing a red circle (resp., a red $\times$) contribute a factor $1-q^it^j$ to the numerator (resp., denominator) of $\beta_2$ and $\ov{\beta}_2$. The cell containing one circle of each color is $c_{\mu,\nu}$; the terms coming from this cell are cancelled by the terms in the numerator of $\gamma_{r,s}'$. (The terms in the denominator of $\gamma_{r,s}'$ cancel with the terms coming from the two red $\times$'s adjacent to the shaded red box.)}
\label{fig_parameters}
\end{figure}

In this section, we give more explicit formulas for the probabilities $\mc{P}_\la(\mu \rightarrow \nu)$ and $\ov{\mc{P}}_\la(\mu \leftarrow \nu)$ in terms of a set of parameters associated to the partition $\la$. These formulas provide further insight into the probabilities, and they lead to an even more compact formulation that is essential for the proofs of Theorems \ref{thm_r=0} and \ref{thm_r>0}.

Suppose $\la$ is a partition with $d$ distinct part sizes $u_1 > \cdots > u_d > 0$. Define positive integers $h_1, \ldots, h_d$ and $v_1, \ldots, v_d$ by taking $v_i$ to be the multiplicity of the part $u_i$ in $\la$, and setting $h_i = u_i - u_{i+1}$ (with $u_{d+1} = 0$). The $h_i$ (resp., $v_i$) give the lengths of the horizontal (resp., vertical) segments of the boundary of the Young diagram of $\la$, starting from the southeast, as illustrated in Figure \ref{fig_parameters}. We call the vectors $h = (h_1, \ldots, h_d)$ and $v = (v_1, \ldots, v_d)$ the \emph{parameters} of $\la$. Set
\[
h_{i,j} = h_i + \cdots + h_j, \quad\quad v_{i,j} = v_i + \cdots + v_j
\]
for $i \leq j$, and $h_{i,j} = v_{i,j} = 0$ for $i > j$.

The set $\mc{U}(\la)$ consists of $d+1$ elements, which we denote by $\la^{(+0)}, \ldots, \la^{(+d)}$, where $\la^{(+s)}$ is obtained from $\la$ by adding a box in row $v_{1,s} + 1$. The set $\mc{D}^*(\la)$ also consists of $d+1$ elements, which we denote by $\la^{(-0)}, \ldots, \la^{(-d)}$, where $\la^{(-0)} = \la$, and for $r \geq 1$, $\la^{(-r)}$ is obtained from $\la$ by removing a box from row $v_{1,r}$. This is illustrated in Figure \ref{fig_parameters}.

For $r,s \in \{0,\ldots,d\}$, set
\[
p_{r,s}(q,t) = \mc{P}_\la(\la^{(-r)} \rightarrow \la^{(+s)}), \quad\quad \ov{p}_{r,s}(q,t) = \ov{\mc{P}}_\la(\la^{(-r)} \leftarrow \la^{(+s)}).
\]
For $s \in \{0,\ldots,d\}$ and $r \in \{1,\ldots,d\}$, define $\alpha_s(q,t), \ov{\alpha}_s(q,t), \beta_r(q,t), \ov{\beta}_r(q,t), \gamma_{r,s}(q,t)$ by
\[
\alpha_s(q,t) = \alpha_{\la^{(+s)}/\la}(q,t), \quad \beta_r(q,t) = \beta_{\la/\la^{(-r)}}(q,t), \quad \gamma_{r,s}(q,t) = \gamma_{\la^{(+s)}/\la/\la^{(-r)}}(q,t), \quad \text{ etc.}
\]
Often the partition $\la$ will be clear from context; if not, we will write $p_{r,s}[\la](q,t), \alpha_s[\la](q,t)$, etc. We will frequently omit the dependence of these expressions on $q$ and $t$ when we are not specializing either variable.

\begin{prop}
\label{prop_explicit_formulas}
Suppose $\la$ has $d$ distinct part sizes and parameters $(h_1, \ldots, h_d)$ and $(v_1, \ldots, v_d)$. Recall that $[i,j] = 1-q^it^j$, and set $[i,j]^+ = [i+1,j-1], [i,j]^- = [i-1,j+1]$. We have
\[
\alpha_s = \prod_{i=1}^s \dfrac{[h_{i,s},v_{i+1,s}]}{[h_{i,s},v_{i,s}]} \prod_{i=s+1}^d \dfrac{[h_{s+1,i-1},v_{s+1,i}]}{[h_{s+1,i},v_{s+1,i}]},
\]
\[
\ov{\alpha}_s = \prod_{i=1}^s \dfrac{[h_{i,s},v_{i+1,s}]^-}{[h_{i,s},v_{i,s}]^-} \prod_{i=s+1}^d \dfrac{[h_{s+1,i-1},v_{s+1,i}]^+}{[h_{s+1,i},v_{s+1,i}]^+},
\]
\[
\beta_r = \prod_{i=1}^{r-1} \dfrac{[h_{i,r-1},v_{i,r}]^+}{[h_{i,r-1},v_{i+1,r}]^+} \times \dfrac{[0,v_r]^+}{[0,1]^+} \dfrac{[h_r,0]^-}{[1,0]^-} \times \prod_{i=r+1}^d \dfrac{[h_{r,i},v_{r+1,i}]^-}{[h_{r,i-1},v_{r+1,i}]^-},
\]
\[
\ov{\beta}_r = \prod_{i=1}^{r-1} \dfrac{[h_{i,r-1},v_{i,r}]}{[h_{i,r-1},v_{i+1,r}]} \times \dfrac{[0,v_r]}{[0,1]} \dfrac{[h_{r},0]}{[1,0]} \times \prod_{i=r+1}^d \dfrac{[h_{r,i},v_{r+1,i}]}{[h_{r,i-1},v_{r+1,i}]},
\]
\[
\gamma_{r,s} = \begin{cases}
\gamma'_{r,s} & \text{ if } 0 < r \leq s \medskip \\
\gamma'_{r,s} \cdot q^{-1-2h_{s+1,r-1}} \cdot t^{1-2v_{s+1,r}}  & \text{ if } r > s,
\end{cases}
\]
where
\[
\gamma'_{r,s} = \begin{cases}
\dfrac{[h_{r,s},v_{r+1,s}][h_{r,s},v_{r+1,s}]^-}{[0,1][1,0]} & \text{ if } 0 < r \leq s \medskip \\
\dfrac{[h_{s+1,r-1},v_{s+1,r}][h_{s+1,r-1},v_{s+1,r}]^+}{[0,1][1,0]} & \text{ if } r > s.
\end{cases}
\]
Thus, the probabilities can be written as
\begin{equation}
\label{eq_row_probs}
p_{r,s}(q,t) = \begin{cases}
\ds t^{v_{1,s}} \alpha_s & \text{ if } r = 0 \bigskip \\
\ds \tau_{r,s} \dfrac{\alpha_s \beta_r}{\gamma'_{r,s}} & \text{ if } r > 0,
\end{cases}
\quad\quad\quad
\ov{p}_{r,s}(q,t) = \begin{cases}
\ds t^{v_{1,s}} \ov{\alpha}_s & \text{ if } r = 0 \bigskip \\
\ds \tau_{r,s} \dfrac{\ov{\alpha}_s \ov{\beta}_r}{\gamma'_{r,s}} & \text{ if } r > 0,
\end{cases}
\end{equation}
where
\[
\tau_{r,s} = \begin{cases}
t^{v_{r+1,s}} & \text{ if } 0 < r \leq s \\
q^{1 + 2h_{s+1,r-1}} \cdot t^{-1+v_{s+1,r}} & \text{ if } r > s.
\end{cases}
\]
\end{prop}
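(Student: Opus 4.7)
The plan is to verify each formula by direct computation from its definition, making systematic use of the fact that the relevant sets $\mc{R}$ and $\mc{C}$ decompose into ``strips'' indexed by the horizontal/vertical segments of the boundary of $\la$, on each of which one parameter (arm or leg) varies by $1$ while the other is constant. This produces a telescoping product.

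First, I would pin down the coordinates: the added cell of $\nu = \la^{(+s)}$ sits at $(u_{s+1}+1,\, v_{1,s}+1)$ (with $u_i=h_{i,d}$), and the removed cell of $\la/\la^{(-r)}$ sits at $(u_r,\, v_{1,r})$. For $\alpha_s = \alpha_{\nu/\la}$, the set $\mc{R}_{\nu/\la}$ consists of the cells $(x, v_{1,s}+1)$ with $1 \le x \le u_{s+1}$. I would partition these cells according to which vertical segment of $\la$ their column belongs to: for $i \ge s+1$, the columns $x \in (h_{i+1,d},\, h_{i,d}]$ have $\la'_x = v_{1,i}$, so $\ell_\la(c) = v_{s+1,i}-1$ and $a_\la(c) = h_{s+1,d}-x$ ranges over $\{h_{s+1,i-1},\ldots,h_{s+1,i}-1\}$. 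The contribution of one such cell to $\alpha_s$ is $[a_\la,\,\ell_\la+1]/[a_\la+1,\,\ell_\la+1]$, and the product over the strip telescopes to $[h_{s+1,i-1},\,v_{s+1,i}]/[h_{s+1,i},\,v_{s+1,i}]$. Similarly, $\mc{C}_{\nu/\la}$ stratifies by horizontal segments, yielding (after telescoping in $\ell$) the first product in the claimed formula for $\alpha_s$. The formula for $\ov{\alpha}_s$ is obtained by the identical partition-and-telescope argument; the swapped roles of $[a,\ell+1]$ and $[a+1,\ell]$ in the defining products produce exactly the shifts $[\,\cdot\,]^-$ and $[\,\cdot\,]^+$.

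Next I would compute $\alpha_{\la/\la^{(-r)}}$ (and its $\ov{\alpha}$ version) in the same way, then invert to get $\beta_r$ and $\ov{\beta}_r$. The two middle factors $\frac{[0,v_r]^+}{[0,1]^+}$ and $\frac{[h_r,0]^-}{[1,0]^-}$ come from the ``boundary'' strips $i = r$ (in both the $\mc{R}$ and $\mc{C}$ decompositions), which are truncated because the cells $(u_r,\,v_{1,r}), (u_r, v_{1,r-1}+1,\dots)$ lying in the corner sit on the edges of the strip; one must check that the formula still gives $1$ when $h_r = 1$ or $v_r = 1$ (the truncated strip becomes empty), which is indeed the case.

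For $\gamma_{r,s}$, I would first read off $n(\nu/\la) = v_{1,s}$ and $n(\la/\mu) = v_{1,r}-1$ from the general fact that adding a cell in row $y$ increases $n$ by $y-1$ (each cell below in the same column contributes one extra to the total leg), and dually $n'(\nu/\la) = h_{s+1,d}$ and $n'(\la/\mu) = h_{r,d}-1$. Plugging into the definition, when $r \le s$ one has $n(\nu/\la)-n(\la/\mu) = v_{r+1,s}+1$ and $n'(\la/\mu)-n'(\nu/\la) = h_{r,s}-1$, giving the two factors $[h_{r,s},v_{r+1,s}]^-$ and $[h_{r,s},v_{r+1,s}]$, hence $\gamma_{r,s} = \gamma'_{r,s}$. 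When $r > s$ the exponents become negative and one pulls out the monomial $-q^{-h_{s+1,r-1}-1}t^{1-v_{s+1,r}}$ from the first factor and $-q^{-h_{s+1,r-1}}t^{-v_{s+1,r}}$ from the second; the two minus signs cancel and the leftover monomial is exactly the correction $q^{-1-2h_{s+1,r-1}}t^{1-2v_{s+1,r}}$ multiplying $\gamma'_{r,s}$.

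Finally, substituting into Definitions \ref{defn_r=0} and \ref{defn_r>0}, the power of $t$ in $p_{r,s}$ equals $n(\nu/\la)-n(\la/\mu)-1 = v_{1,s}-v_{1,r}$, which is $v_{r+1,s}$ or $-v_{s+1,r}$ according as $r \le s$ or $r > s$; combining with the monomial correction coming from $\gamma_{r,s}$ in the latter case produces exactly $\tau_{r,s}$, and \eqref{eq_row_probs} follows. The main obstacle is simply bookkeeping: keeping track of the boundary strips $i = r$ and $i = s$ in the telescoping arguments, handling the sign flips when exponents in $\gamma$ turn negative, and checking that all the degenerate cases ($r = 0$, $r = s+1$, $h_r = 1$, $v_r = 1$, $s = 0$ or $s = d$) reduce correctly to the stated formulas.
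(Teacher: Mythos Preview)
Your approach is correct and essentially identical to the paper's: both partition $\mc{R}_{\nu/\la}$ and $\mc{C}_{\nu/\la}$ (and the analogous sets for $\la/\mu$) into strips along the boundary segments of $\la$, telescope each strip to a single ratio, and then compute $\gamma_{r,s}$ by reading off $n(\nu/\la)=v_{1,s}$, $n(\la/\mu)=v_{1,r}-1$, $n'(\nu/\la)=h_{s+1,d}$, $n'(\la/\mu)=h_{r,d}-1$ and using $[-i,-j]=-q^{-i}t^{-j}[i,j]$ in the case $r>s$. The paper in fact works out only $\alpha_s$ in detail and says ``the formulas for $\ov{\alpha}_s,\beta_r,\ov{\beta}_r$ are proved in the same way; we omit the details,'' so your more explicit treatment of the boundary strips for $\beta_r$ goes slightly beyond what is written there.
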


Before giving the proof, we make several observations and give an example.

\begin{rem} \
\label{rem_row_probs}
\begin{enumerate}
\item
Since the parameters $h_i$ and $v_i$ are strictly positive, one can easily verify that each of the factors $1-q^it^j$ appearing in \eqref{eq_row_probs} satisfies $i,j \geq 0$ and $i+j > 0$, and that the exponents of $q$ and $t$ in $\tau_{r,s}$ are nonnegative. This makes it clear that $\mc{P}_\la(\mu \rightarrow \nu)$ and $\ov{\mc{P}}_\la(\mu \leftarrow \nu)$ are nonnegative whenever $q,t \in [0,1)$ or $q,t \in (1,\infty)$ (in the latter case, we also use the fact that the expressions for $\alpha_s, \ov{\alpha}_s, \beta_r, \ov{\beta}_r, \gamma'_{r,s}$ each contain the same number of factors in their numerator and denominator). Since the expressions $\mc{P}_\la(\mu \rightarrow \nu)$ and $\ov{\mc{P}}_\la(\mu \leftarrow \nu)$ sum to 1, they are honest probabilities in such specializations.

\item
Each of the factors $1-q^it^j$ appearing in the formulas \eqref{eq_row_probs} is associated with a particular cell of $\mc{R}_{\nu/\la}, \mc{C}_{\nu/\la}, \mc{R}_{\la/\mu},$ or $\mc{C}_{\la/\mu}$, where $\nu = \la^{(+s)}$ and $\mu = \la^{(-r)}$. These cells are marked in Figure \ref{fig_parameters}. The proof of Proposition \ref{prop_explicit_formulas} consists in verifying that all the other terms in the products defining $\alpha_{\nu/\la}, \beta_{\la/\mu}$, etc. cancel out.
\end{enumerate}
\end{rem}

\begin{ex}
\label{ex_probs_d=1}
If $\la$ has one distinct part size, then $\la = (h_1^{v_1})$ is a rectangle. Writing $h = h_1, v = v_1$, we have
\[
\alpha_0 = \dfrac{1-t^v}{1-q^ht^v}, \quad\quad \alpha_1 = \dfrac{1-q^h}{1-q^ht^v}, \quad\quad \beta_1 = \dfrac{(1-qt^{v-1})(1-q^{h-1}t)}{(1-q)(1-t)},
\]
and the probabilities $p_{r,s}$ are:
\[
\begin{array}{ll}
p_{0,0} = \dfrac{1-t^v}{1-q^ht^v} & \quad\quad p_{0,1} = t^v\dfrac{1-q^h}{1-q^ht^v} \bigskip \\
p_{1,0} = qt^{v-1}\dfrac{1-q^{h-1}t}{1-q^ht^v} & \quad\quad p_{1,1} = \dfrac{1-qt^{v-1}}{1-q^ht^v}.
\end{array}
\]
Similarly, the probabilities $\ov{p}_{r,s}$ are:
\[
\begin{array}{ll}
\ov{p}_{0,0} = \dfrac{1-qt^{v-1}}{1-q^{h+1}t^{v-1}} & \quad\quad \ov{p}_{0,1} = t^v\dfrac{1-q^{h-1}t}{1-q^{h-1}t^{v+1}} \bigskip \\
\ov{p}_{1,0} = qt^{v-1}\dfrac{1-q^h}{1-q^{h+1}t^{v-1}} & \quad\quad \ov{p}_{1,1} = \dfrac{1-t^v}{1-q^{h-1}t^{v+1}}.
\end{array}
\]
\end{ex}

\begin{proof}[Proof of Proposition \ref{prop_explicit_formulas}]
Let $\nu = \la^{(+s)}$. By definition,
\begin{equation}
\label{eq_alpha_def}
\alpha_s = \alpha_{\nu/\la} = \prod_{c \in \mc{R}_{\nu/\la}} \dfrac{[a_\la(c),\ell_\la(c)+1]}{[a_\nu(c),\ell_\nu(c)+1]} \prod_{c \in \mc{C}_{\nu/\la}} \dfrac{[a_\la(c)+1,\ell_\la(c)]}{[a_\nu(c)+1,\ell_\nu(c)]}.
\end{equation}
Write $(x,y) = (h_{s+1,d}+1, v_{1,s}+1)$ for the coordinates of the cell $\nu/\la$. For $i = 1, \ldots, s$, the contribution to the second product in \eqref{eq_alpha_def} from the cells $(x,v_{1,i-1}+1), \ldots, (x,v_{1,i})$ is
\[
\dfrac{[h_{i,s},v_{i,s}-1]}{[h_{i,s},v_{i,s}]} \dfrac{[h_{i,s},v_{i,s}-2]}{[h_{i,s},v_{i,s}-1]} \cdots \dfrac{[h_{i,s},v_{i,s}-v_i]}{[h_{i,s},v_{i,s}-v_i+1]} = \dfrac{[h_{i,s},v_{i+1,s}]}{[h_{i,s},v_{i,s}]}.
\]
For $i = s+1, \ldots, d$, the contribution to the first product from the cells $(h_{i+1,d}+1,y), \ldots, (h_{i,d},y)$ is
\[
\dfrac{[h_{s+1,i}-1,v_{s+1,i}]}{[h_{s+1,i},v_{s+1,i}]} \dfrac{[h_{s+1,i}-2,v_{s+1,i}]}{[h_{s+1,i}-1,v_{s+1,i}]} \cdots \dfrac{[h_{s+1,i}-h_i,v_{s+1,i}]}{[h_{s+1,i}-h_i+1,v_{s+1,i}]} = \dfrac{[h_{s+1,i-1},v_{s+1,i}]}{[h_{s+1,i},v_{s+1,i}]}.
\]
This proves the formula for $\alpha_s$. The formulas for $\ov{\alpha}_s, \beta_r$, and $\ov{\beta}_r$ are proved in the same way, using $\mu = \la^{(-r)}$ instead of $\nu = \la^{(+s)}$ for the latter two; we omit the details.
The formula for $\gamma_{r,s}$ comes from the observation that
\[
\begin{array}{cc}
n'(\nu/\la) = h_{s+1,d} & \quad n'(\la/\mu) = h_{r,d}-1 \\
n(\nu/\la) = v_{1,s} & \quad n(\la/\mu) = v_{1,r}-1
\end{array}
\]
together with (in the case $r > s$) the identity $[-i,-j] = -q^{-i}t^{-j}[i,j]$.
\end{proof}

One immediate consequence of the formulas \eqref{eq_row_probs} and Remark \ref{rem_row_probs}(1) is that
\[
p_{r,s}(0,0) = \ov{p}_{r,s}(0,0) = \delta_{r,s}
\]
(the Kronecker delta). That is, when $q = t = 0$, the probabilistic bijection $\mc{P}_\la, \ov{\mc{P}}_\la$ reduces to the bijection $F_\la^\row$ (see \S \ref{sec: Up and Down}), which gives rise to the growth rules for ordinary Robinson--Schensted. This is consistent with the fact that setting $q = t = 0$ turns the Macdonald polynomials into the Schur polynomials. Note, however, that although setting $q = t$ is enough to recover the Schur polynomials, this specialization does not ``trivialize'' the probabilities! Instead, we obtain a one-parameter family of probabilistic bijections between the (trivially) weighted sets $(\mc{D}^*(\la), 1)$ and $(\mc{U}(\la),1)$, which contains the bijection $F_\la^\row$ at one extreme. The $q = t$ specialization is further examined in \S \ref{sec_q=t_spec}.

\begin{rem}
\label{rem_uniqueness}
The space of probabilistic bijections from $(\mc{D}^*(\la), \omega_\la)$ to $(\mc{U}(\la), \ov{\omega}_\la)$ is quite large in general. We have seen that our particular choice $\mc{P}_\la, \ov{\mc{P}}_\la$ has two very nice properties: it takes nonnegative values when $q,t \in [0,1)$ or $q,t \in (1, \infty)$ (Remark \ref{rem_row_probs}(1)), and it reduces to the row insertion bijection $F_\la^{\row}$ when $q = t = 0$. In addition, $\mc{P}_\la, \ov{\mc{P}}_\la$ reduces to $F_\la^{\col}$ when $q = t \rightarrow \infty$ (this is shown in \S \ref{sec_column_insertion}). It seems possible that $\mc{P}_\la, \ov{\mc{P}}_\la$ is the unique \emph{rational} probabilistic bijection (i.e., with ``probabilities'' in $\bb{R}(q,t)$) with these properties. Perhaps it is even the unique rational probabilistic bijection which takes nonnegative values for $q,t \in [0,1)$, and reduces to $F_\la^\row$ when $q = t = 0$.
\end{rem}

\subsection{Proofs of Theorems \ref{thm_r=0} and \ref{thm_r>0}}
\label{sec_proofs}

For the proofs of Theorem \ref{thm_r=0} and Theorem \ref{thm_r>0}, we first rewrite the probabilities $p_{r,s}$ and $\ov{p}_{r,s}$ in a more compact form.

\begin{prop}
\label{prop: 3rd version of probabilities}
Let $\la$ be a partition with parameters $(h_1,\ldots,h_d)$ and $(v_1,\ldots,v_d)$. Set $x_i = h_{1,i}$ and $y_i =v_{1,i}$ for $0 \leq i \leq d$ (in particular, $x_0 = y_0 = 0$). We have
\begin{align}
\label{eq: prob r=0 with x,y}
p_{0,s} = \frac{\prod\limits_{i=1}^d (q^{x_s}t^{y_s}-q^{x_{i-1}}t^{y_i})}{\prod\limits_{\substack{i=0 \\ i \neq s}}^d (q^{x_s}t^{y_s}-q^{x_i}t^{y_i})}, \qquad \qquad
\ov{p}_{0,s} = \frac{\prod\limits_{i=1}^d (q^{x_s-1}t^{y_s+1}- q^{x_{i-1}}t^{y_i})}{\prod\limits_{\substack{i=0 \\ i \neq s}}^d (q^{x_s-1}t^{y_s+1}- q^{x_i}t^{y_i})},
\end{align}
and for $r>0$,
\begin{align}
\label{eq: prob r>0 with x,y 1}
p_{r,s}&=\prod_{\substack{i=0 \\ i \neq s}}^d \frac{q^{x_{r-1}+1}t^{y_r-1}-q^{x_i}t^{y_i}}{q^{x_s}t^{y_s}-q^{x_i}t^{y_i}}
\prod_{\substack{i=1 \\ i \neq r}}^d \frac{q^{x_s}t^{y_s}-q^{x_{i-1}}t^{y_i}}{q^{x_{r-1}+1}t^{y_r-1}-q^{x_{i-1}}t^{y_i}} ,\\
\label{eq: prob r>0 with x,y 2}
\ov{p}_{r,s}&= \prod_{\substack{i=0 \\ i \neq s }}^d \frac{q^{x_{r-1}}t^{y_r}- q^{x_i}t^{y_i}}{q^{x_s-1}t^{y_s+1}-q^{x_i}t^{y_i}}
\prod_{\substack{i=1 \\ i \neq r}}^d \frac{q^{x_s-1}t^{y_s+1}-q^{x_{i-1}}t^{y_i}}{q^{x_{r-1}}t^{y_r}-q^{x_{i-1}}t^{y_i}}.
\end{align}
\end{prop}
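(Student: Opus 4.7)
The proof is a direct verification: after the substitution $x_i = h_{1,i}, y_i = v_{1,i}$, we factor each term in the compact formulas \eqref{eq: prob r=0 with x,y}--\eqref{eq: prob r>0 with x,y 2} and match with Proposition \ref{prop_explicit_formulas}. The key identity is that every factor $q^a t^b - q^c t^d$ arising has $a - c$ and $b - d$ of the same (weak) sign, so it may be written as
\[
q^a t^b - q^c t^d = \epsilon \cdot q^{\min(a,c)}t^{\min(b,d)}\bigl[\,|a-c|,\,|b-d|\,\bigr],
\]
with $\epsilon = -1$ when both differences are nonnegative (and not both zero) and $\epsilon = +1$ otherwise. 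The sign pattern is determined by the position of $i$ relative to $s$ (for factors involving $A = q^{x_s}t^{y_s}$) or to $r$ (for factors involving $B = q^{x_{r-1}+1}t^{y_r-1}$).

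First I would handle the $r = 0$ case. Applying the factorization to the $2d$ terms of \eqref{eq: prob r=0 with x,y} and re-indexing $i \mapsto i+1$ in the lower half of the denominator product, the $[\,\cdot\,,\,\cdot\,]$-factors align precisely with those defining $\alpha_s$; the signs pair up and cancel, and the remaining monomial contributions collapse to $t^{y_s} = t^{v_{1,s}}$, yielding $p_{0,s} = t^{v_{1,s}}\alpha_s$. The identical argument for $\ov p_{0,s}$ with $A$ replaced by $A' = Aq^{-1}t$ shifts each $[\,a,b\,]$-factor by $(-1,+1)$ for $i \leq s$ and by $(+1,-1)$ for $i > s$, producing exactly $\ov{\alpha}_s$.

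For $r > 0$, applying the factorization to the four sub-products of \eqref{eq: prob r>0 with x,y 1} gives the $[\,\cdot\,,\,\cdot\,]$-content of $\alpha_s$ (from the $A$-terms) up to a missing factor at $i=r$ and the $[\,\cdot\,,\,\cdot\,]$-content of $\beta_r$ (from the $B$-terms) up to a missing factor at $i=s$; these missing factors, together with the collected signs and monomial prefactors, assemble precisely into $\tau_{r,s}/\gamma'_{r,s}$, with the two cases $r \leq s$ and $r > s$ in Proposition \ref{prop_explicit_formulas} corresponding to the two possible sign conventions for the missing factor $A - q^{x_{r-1}}t^{y_r}$. The formula for $\ov p_{r,s}$ then follows by the substitution $A \to A q^{-1}t$, $B \to B q^{-1}t$, which carries $\alpha_s \mapsto \ov{\alpha}_s$ and $\beta_r \mapsto \ov{\beta}_r$ while leaving $\tau_{r,s}$ and $\gamma'_{r,s}$ invariant (these depend only on $q,t$ and the parameters $h_i, v_i$, not on $A$ or $B$). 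The main obstacle is bookkeeping: roughly $4d$ factors must be tracked through a case-split based on the position of $i$ relative to both $r$ and $s$, with signs and monomial exponents handled carefully; a check in the smallest case ($d=2$, $r=1$, $s=2$) confirms the template, and the general pattern follows by direct extrapolation.
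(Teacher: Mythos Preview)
Your approach is correct and essentially the same as the paper's: both are direct verifications using the identity $[i,j] = -q^it^j[-i,-j]$ (equivalently, your factorization $q^at^b - q^ct^d = \epsilon\, q^{\min(a,c)}t^{\min(b,d)}[\,|a-c|,|b-d|\,]$) to pass between the bracket products of Proposition~\ref{prop_explicit_formulas} and the compact difference products. The paper computes in the direction Proposition~\ref{prop_explicit_formulas} $\Rightarrow$ compact form, writing out only $p_{0,s}$ in detail and declaring the rest analogous; you compute in the reverse direction and give more structure for the $r>0$ case (identifying the missing $i=r$ and $i=s$ factors as the source of $\tau_{r,s}/\gamma'_{r,s}$), but the underlying computation is identical.
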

\begin{proof}
The formulas follow directly from the explicit expressions for the probabilities $p_{r,s}, \ov{p}_{r,s}$ in Proposition \ref{prop_explicit_formulas}. We explain this for $p_{0,s}$ in more detail; the other formulas are obtained analogously. Using the definition of $x_i,y_i$ and the identity $[i,j]=-q^it^j[-i,-j]$, we obtain
\begin{multline*}
p_{0,s} = t^{y_s} \prod_{i=1}^s \frac{[x_s-x_{i-1},y_s-y_i]}{[x_s-x_{i-1},y_s-y_{i-1}]}
\prod_{i=s+1}^d \frac{[x_{i-1}-x_s,y_i-y_s]}{[x_i-x_s,y_i-y_s]} \\
= t^{y_s} \prod_{i=1}^s \frac{q^{x_s-x_{i-1}}t^{y_s-y_i}[x_{i-1}-x_s,y_i-y_s]}{q^{x_s-x_{i-1}}t^{y_s-y_{i-1}}[x_{i-1}-x_s,y_{i-1}-y_s]}
\prod_{i=s+1}^d \frac{[x_{i-1}-x_s,y_i-y_s]}{[x_i-x_s,y_i-y_s]}\\
= \frac{\prod\limits_{i=1}^d \left(1-q^{x_{i-1}-x_s}t^{y_i-y_s}\right)}{\prod\limits_{\substack{i=0 \\ i \neq s}}^d \left(1-q^{x_i-x_s}t^{y_i-y_s}\right)}
= \frac{\prod\limits_{i=1}^d \left(q^{x_s}t^{y_s}-q^{x_{i-1}}t^{y_i}\right)}{\prod\limits_{\substack{i=0 \\ i \neq s}}^d \left(q^{x_s}t^{y_s}-q^{x_i}t^{y_i}\right)}. \qedhere
\end{multline*}
\end{proof}

The main idea of our proof is to use Lagrange interpolation. Consider a function $f: k \rightarrow k$, where $k$ is a field. Given pairwise distinct elements $a_0, \ldots, a_d \in k$, define the \emph{interpolation polynomial} of $f$ at the positions $a_0, \ldots, a_d$ by
\[
I_f^{a_0, \ldots, a_d}(x) = \sum_{s=0}^d f(a_s) \prod_{\substack{i=0 \\ i \neq s}}^d \frac{x-a_i}{a_s-a_i}.
\]
\begin{lem}
\label{lem: interpolation poly}
If $f$ is a polynomial of degree at most $d$, then $I_f^{a_0, \ldots, a_d}(x) = f(x)$.
\end{lem}
\begin{proof}
Both $f$ and $I^{a_0, \ldots, a_d}_f$ are polynomials of degree at most $d$ which agree at the $d+1$ positions $a_0, \ldots, a_d$, so they are equal.
\end{proof}

\begin{proof}[Proof of Theorem \ref{thm_r=0}]
Writing $a_i=q^{x_i}t^{y_i}$ and  $b_i=q^{x_{i-1}}t^{y_i}$, we have
\begin{equation}
\label{eq: p_0,s with a,b}
p_{0,s}= \frac{\prod\limits_{i=1}^d (a_s-b_i)}{\prod\limits_{\substack{i=0 \\ i \neq s}}^d (a_s-a_i)}.
\end{equation}
(Note that since $h_i, v_i > 0$, the $x_i$ and $y_i$, and thus the $a_i$, are distinct.) The sum $\sum_{s =0}^d p_{0,s}$ is the leading coefficient of the interpolation polynomial
\[
I^{a_0, \ldots, a_d}_f(x) = \sum_{s=0}^d \prod_{i=1}^d (a_s-b_i) \prod_{\substack{i=0 \\i \neq s}}^d\frac{x-a_i}{a_s-a_i}
\]
for $f(x)=\prod_{i=1}^d(x-b_i)$, and hence by Lemma \ref{lem: interpolation poly} equal to $1$.
\end{proof}

\begin{proof}[Proof of Theorem \ref{thm_r>0}]
Fix $r > 0$. Set $a_i=q^{x_i}t^{y_i}$ for $0 \leq i \leq d$ and $b_i=q^{x_{i-1}}t^{y_i}$ for $1 \leq i \leq d$, with the exception $b_r=q^{x_{r-1}+1}t^{y_{r}-1}$. This allows us to rewrite $p_{r,s}$ as
\begin{equation}
\label{eq: p_r,s with a,b}
p_{r,s}= \prod_{\substack{i=0 \\i \neq s}}^d \frac{b_r-a_i}{a_s-a_i} \prod_{\substack{i=1 \\ i\neq r}}^d\frac{a_s-b_i}{b_r-b_i}.
\end{equation}
Interpolation of the polynomial $f(x)=\prod\limits_{\substack{i=1 \\ i \neq r}}^d (x-b_i)$ at the positions $a_0, \ldots, a_d$ gives the identity
\[
\prod\limits_{\substack{i=1 \\ i \neq r}}^d (x-b_i) =
\sum_{s=0}^d \prod_{\substack{i=1 \\ i \neq r}}^d(a_s-b_i) \prod_{\substack{i=0 \\ i \neq s}}^d\frac{x-a_i}{a_s-a_i}.
\]
Dividing both sides by $f(x)$ and setting $x = b_r$, we obtain $1 = \sum_{s=0}^d p_{r,s}$. (The positivity of $h_i$ and $v_i$ implies that $b_r \neq b_i$ for $i \neq r$.)
\\

Now fix $s \geq 0$. Set $a_i=q^{x_i}t^{y_i}$ for $0 \leq i \leq d$, with the exception $a_s=q^{x_{s}-1}t^{y_{s}+1}$, and set $b_i=q^{x_{i-1}}t^{y_i}$ for $1 \leq i \leq d$. This allows us to rewrite $\ov{p}_{r,s}$ as
\begin{equation}
\label{eq: ov_p_r,s with a,b}
\ov{p}_{0,s}= \frac{\prod\limits_{i=1}^d (a_s-b_i)}{\prod\limits_{\substack{i=0 \\ i \neq s}}^d (a_s-a_i)}, \qquad \ov{p}_{r,s}= \prod_{\substack{i=0 \\i \neq s}}^d \frac{b_r-a_i}{a_s-a_i} \prod_{\substack{i=1 \\ i\neq r}}^d\frac{a_s-b_i}{b_r-b_i} \quad \text{ for } r > 0.
\end{equation}
Consider the interpolation polynomial $I^{b_1, \ldots, b_d}_f$ for the degree $d-1$ polynomial
\[
f(x)= \prod_{\substack{i=0 \\ i \neq s}}^d (x-a_i) - \prod_{i=1}^d (x-b_i).
\]
Lemma \ref{lem: interpolation poly} gives
\[
\prod_{\substack{i=0 \\ i \neq s}}^d (x-a_i) - \prod_{i=1}^d (x-b_i)
= \sum_{r=1}^d \left( \prod_{\substack{i=0 \\ i \neq s}}^d (b_r-a_i) - \prod_{i=1}^d (b_r-b_i) \right)
\prod_{\substack{i=1 \\ i \neq r}}^d \frac{x-b_i}{b_r-b_i}.
\]
Setting $x= a_s$, adding $\prod_{i=1}^d (a_s-b_i)$ to both sides, and then dividing by $\prod\limits_{\substack{i=0 \\ i \neq s}}^d (a_s-a_i)$, we obtain
\[
1= \sum_{r=1}^d \prod_{\substack{i=0 \\ i \neq s}}^d \frac{b_r-a_i}{a_s-a_i} \prod_{\substack{i=1 \\ i \neq r}}^d \frac{a_s-b_i}{b_r-b_i} + \frac{\prod\limits_{i=1}^d (a_s-b_i)}{\prod\limits_{\substack{i=0 \\ i \neq s}}^d (a_s-a_i)} = \sum_{r=0}^d \ov{p}_{r,s}. \qedhere
\]
\end{proof}

\begin{rem}
Remarkably, the expressions for $p_{r,s}$ and $\ov{p}_{r,s}$ in \eqref{eq: p_0,s with a,b}, \eqref{eq: p_r,s with a,b}, and \eqref{eq: ov_p_r,s with a,b} coincide with the expressions obtained by setting $a_i = x_i + y_i$ and $b_i = x_{i-1} + y_i$ in the expressions $\lim_{q \rightarrow 1} p_{r,s}(q,q)$ and $\lim_{q \rightarrow 1} \ov{p}_{r,s}(q,q)$. Thus, proving Theorems \ref{thm_r=0} and \ref{thm_r>0} for arbitrary $q,t$ is equivalent to proving them in the limit $q = t \rightarrow 1$.
\end{rem}

\subsection{\qrst: Definition and examples}
\label{sec_qrst}

In \S \ref{sec_weighted_sets}-\ref{sec_proofs}, we proved the commutation relation for the $(q,t)$-up and down operators by introducing a family of probabilistic bijections $\mc{P}_\la, \ov{\mc{P}}_\la$ between the weighted sets $\mc{D}^*(\la)$ and $\mc{U}(\la)$. In this section, we introduce the probabilistic insertion algorithm $\qrst$ by interpreting the $\mc{P}_\la$ as (probabilistic) local growth rules.

We freely use the definitions and notation from \S \ref{sec: Fomin growth diagrams}. Let $\Lambda$ be a growth, and suppose $\square \in \Lambda$ is a square in the $n \times n$ grid whose vertices are labeled by $\Lambda$. The square $\square$ has one of the following three types of configurations of vertices:\footnote{Here we ignore the entry of the permutation matrix in the middle of the square, which allows us to merge the third and fourth configurations that appeared in \S \ref{sec: Fomin growth diagrams} into a single type.}

\begin{center}
\begin{tikzpicture}[scale=0.8]
\node at (0.5,1.2) {\textbf{Type I}};
\draw (0,0) -- (0,-1) -- (1,-1) -- (1,0) -- (0,0);
\node at (-0.2,0.2) {$\mu$};
\node at (1.2,0.2) {$\mu$};
\node at (-0.2,-1.2) {$\mu$};
\node at (1.25,-1.25) {$\mu$};

\begin{scope}[xshift=5.5cm]
\node at (0.5,1.2) {\textbf{Type II}};
\draw (0,0) -- (0,-1) -- (1,-1) -- (1,0) -- (0,0);
\node at (-0.2,0.2) {$\rho \cap \la$};
\node at (1.2,0.2) {$\rho$};
\node at (-0.2,-1.2) {$\lambda$};
\node at (1.25,-1.25) {$\rho \cup \la$};
\node at (.5,-2.2) {with $\lambda \neq \rho$};
\end{scope}

\begin{scope}[xshift=11cm]
\node at (0.5,1.2) {\textbf{Type III}};
\draw (0,0) -- (0,-1) -- (1,-1) -- (1,0) -- (0,0);
\node at (-0.2,0.2) {$\mu$};
\node at (1.2,0.2) {$\lambda$};
\node at (-0.2,-1.2) {$\lambda$};
\node at (1.25,-1.25) {$\nu$};
\node at (.5,-2.2) {with $\mu \in \D^*(\la), \nu \in \U(\la)$};
\end{scope}
\end{tikzpicture}
\end{center}

\noindent Set
\[
\mc{P}(\Lambda) = \prod_{\square \in \Lambda} \mc{P}(\square), \quad\quad \ov{\mc{P}}(\Lambda) = \prod_{\square \in \Lambda} \ov{\mc{P}}(\square),
\]
where
\[
\mc{P}(\square) = \begin{cases}
\mc{P}_\la(\mu \rightarrow \nu) & \text{ if } \square \text{ is of type III} \\
1 & \text{ otherwise,}
\end{cases}
\quad\quad\quad
\ov{\mc{P}}(\square) = \begin{cases}
\ov{\mc{P}}_\la(\mu \leftarrow \nu) & \text{ if } \square \text{ is of type III} \\
1 & \text{ otherwise.}
\end{cases}
\]

\begin{defn}
Suppose $\sigma \in S_n$ and $P,Q \in \SYT(\la)$ for some $\la \vdash n$. Define
\[
\mc{P}(\sigma \rightarrow P,Q) = \sum_{\Lambda : \sigma \rightarrow P,Q} \mc{P}(\Lambda), \quad\quad \ov{\mc{P}}(\sigma \leftarrow P,Q) = \sum_{\Lambda : \sigma \rightarrow P,Q} \ov{\mc{P}}(\Lambda).
\]
\end{defn}

\begin{thm}
\label{thm_qrst_is_prob_bij}
The expressions $\mc{P}(\sigma \rightarrow P,Q)$ and $\ov{\mc{P}}(\sigma \leftarrow P,Q)$ define a probabilistic bijection between the weighted sets $(S_n, \omega)$ and $\ds \left(\bigsqcup_{\la \vdash n} \SYT(\la) \times \SYT(\la), \ov{\omega}\right)$, where
\[
\omega(\sigma) = \dfrac{(1-t)^n}{(1-q)^n} \quad\quad \text{ and } \quad\quad \ov{\omega}(P,Q) = \psi_P(q,t)\vp_Q(q,t).
\]
Moreover, $\mc{P}(\sigma \rightarrow P,Q)$ and $\ov{\mc{P}}(\sigma \leftarrow P,Q)$ take values in $[0,1]$ when $q,t \in [0,1)$ or $q,t \in (1,\infty)$.
\end{thm}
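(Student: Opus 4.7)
My plan is to verify the three axioms of Definition \ref{def: prob bij} by localizing each one to the type III squares of a growth, using the factorizations $\mc{P}(\Lambda) = \prod_{\square \text{ type III}} \mc{P}_\la(\mu \to \nu)$ and its analog for $\ov{\mc{P}}(\Lambda)$. For the first axiom I rewrite $\sum_{P,Q}\mc{P}(\sigma \to P,Q) = \sum_\Lambda \mc{P}(\Lambda)$ with $\Lambda$ ranging over growths associated to $\sigma$, and build each $\Lambda$ in row-major order: at type I or type II squares the bottom-right corner is forced by the other three (the permutation matrix entry being $0$), while at each type III square I pick $\nu \in \U(\la)$ with probability $\mc{P}_\la(\mu \to \nu)$. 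Since Theorems \ref{thm_r=0} and \ref{thm_r>0} give $\sum_\nu \mc{P}_\la(\mu \to \nu) = 1$, iterating the construction yields $\sum_\Lambda \mc{P}(\Lambda) = 1$. The backward normalization $\sum_\sigma \ov{\mc{P}}(\sigma \leftarrow P,Q) = 1$ is symmetric: fill the bottom row and right column using the chains of $Q$ and $P$ and then propagate leftward and upward, at each type III square choosing $\mu \in \D^*(\la)$ with probability $\ov{\mc{P}}_\la(\mu \leftarrow \nu)$, which sums to $1$ over $\mu$ by the second part of Theorem \ref{thm_r>0}.

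For the compatibility condition, it is enough to prove the per-growth statement $\omega(\sigma)\mc{P}(\Lambda) = \ov\omega(P,Q)\ov{\mc{P}}(\Lambda)$ for each $\Lambda : \sigma \to (P,Q)$, since summing over $\Lambda$ then yields the asserted equality. Multiplying the local compatibilities $\omega_\la(\mu)\mc{P}_\la(\mu \to \nu) = \ov\omega_\la(\nu)\ov{\mc{P}}_\la(\mu \leftarrow \nu)$ over the type III squares reduces this to
\[
\omega(\sigma) \prod_{\square \text{ type III}} \dfrac{\ov\omega_\la(\nu)}{\omega_\la(\mu)} = \psi_P\vp_Q.
\]
The identities $\omega_\la(\la) = 1$, $\omega_\la(\mu) = \frac{1-q}{1-t}\psi_{\la/\mu}\vp_{\la/\mu}$ for $\mu \lessdot \la$, and $\ov\omega_\la(\nu) = \frac{1-q}{1-t}\psi_{\nu/\la}\vp_{\nu/\la}$ follow directly from the definitions, \eqref{eq_psi_vp_one_cell}, and the disjointness of $\mc{R}$ and $\mc{C}$ for single-cell transitions. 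Combining this with the observation that there are exactly $n$ diagonal type III squares (one per $1$ in the permutation matrix), the factors of $(1-q)/(1-t)$ cancel against $\omega(\sigma) = (1-t)^n/(1-q)^n$ and the identity simplifies to
\[
\psi_P\vp_Q \;=\; \dfrac{\prod_{\square \text{ type III}} \psi_{\nu/\la}\vp_{\nu/\la}}{\prod_{\square \text{ off-diagonal type III}} \psi_{\la/\mu}\vp_{\la/\mu}}.
\]

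To prove this last identity I will telescope $\psi_P$ column by column, writing $\psi_P = \prod_{j=1}^n \psi(A_j)/\psi(A_{j-1})$ with $\psi(A_j) = \prod_i \psi_{\Lambda_{i,j}/\Lambda_{i-1,j}}$ the $\psi$-weight of the chain in column $j$ of $\Lambda$, and $\vp_Q$ row by row in the analogous way. A case analysis on the type of each square then assigns it a $\psi$-ratio times a $\vp$-ratio: type I squares give $1$; type III squares give exactly $\psi_{\nu/\la}\vp_{\nu/\la}/(\psi_{\la/\mu}\vp_{\la/\mu})$ (with the convention $\psi_{\la/\la} = \vp_{\la/\la} = 1$ in the diagonal case); and each type II square gives $\psi_{\nu/\rho}\vp_{\nu/\la}/(\psi_{\la/\mu}\vp_{\rho/\mu})$. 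The crux of the argument is the observation that this type II contribution equals $1$, which is precisely the single-term case of equation \eqref{eq_la_rho_qt} (with $\mu = \la \cap \rho$ and $\nu = \la \cup \rho$, applied after interchanging the roles of $\la$ and $\rho$) that is verified in the computation immediately following that equation. I expect this type II cancellation to be the one nontrivial step; the rest is organization and bookkeeping. Finally, the positivity claim for $q,t \in [0,1) \cup (1,\infty)$ is immediate from Remark \ref{rem_row_probs}(1): each local $\mc{P}_\la(\mu \to \nu)$ and $\ov{\mc{P}}_\la(\mu \leftarrow \nu)$ lies in $[0,1]$, so the products $\mc{P}(\Lambda),\ov{\mc{P}}(\Lambda)$ do as well, and hence so do the nonnegative sums $\mc{P}(\sigma \to P,Q),\ov{\mc{P}}(\sigma \leftarrow P,Q)$, which are bounded above by $1$ by the normalization axioms.
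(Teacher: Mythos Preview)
Your proof is correct and follows essentially the same route as the paper's: both verify the normalization axioms by recursive construction of growths and reduce the compatibility condition to a per-growth identity proved by a square-by-square case analysis, with the type II case handled precisely by equation \eqref{eq_la_rho_qt}. The only difference is organizational: you factor out the local compatibility \eqref{eq_prob_bij_compatible} up front and then establish the residual weight identity by telescoping columns (for $\psi$) and rows (for $\vp$) separately, whereas the paper packages the same computation as a single lattice-path induction (Lemma \ref{lem_Lambda_compatibility}), moving a staircase path across the grid one square at a time.
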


\begin{proof}
For a fixed permutation $\sigma$, one can obtain all the growths associated with $\sigma$ by starting with the empty partition along the north and west boundaries, and recursively filling in the rest of the diagram. When the northwest, southwest, and northeast vertices ($\mu,\la,\rho$, respectively) of a square have been filled in, there is either a unique choice for the southeast vertex $\nu$ (for type I and II squares), or $\nu$ is chosen according to the probability distribution $\mc{P}_\la(\mu \rightarrow \nu)$ (for type III squares). Thus, the fact that the expressions $\mc{P}(\sigma \rightarrow P,Q)$ sum to 1 for fixed $\sigma$ follows from the fact that the ``local probabilities'' $\mc{P}_\la(\mu \rightarrow \nu)$ sum to $1$ for fixed $\mu$ and $\la$. Also, since the ``local probabilities'' take values in $[0,1]$ for $q,t \in [0,1)$ or $q,t \in (1, \infty)$ by Remark \ref{rem_row_probs}(1), the same is true of $\mc{P}(\sigma \rightarrow P,Q)$.

Similarly, by starting with a fixed pair $P,Q$ along the east and south boundaries and recursively filling in the rest of the growth according to the ``local backward probabilities'' $\ov{\mc{P}}_\la(\mu \leftarrow \nu)$, one sees that the expressions $\ov{\mc{P}}(\sigma \leftarrow P,Q)$ sum to 1 for fixed $P,Q$, and take values in $[0,1]$ for the appropriate values of $q,t$.

Finally, the compatibility relation $\omega(\sigma) \mc{P}(\sigma \rightarrow P,Q) = \ov{\mc{P}}(\sigma \leftarrow P,Q) \ov{\omega}(P,Q)$ follows from Lemma \ref{lem_Lambda_compatibility} below.
\end{proof}

\begin{lem}
\label{lem_Lambda_compatibility}
If $\Lambda$ is a growth from $\sigma$ to $P,Q$, then
\[
\dfrac{(1-t)^n}{(1-q)^n} \mc{P}(\Lambda) = \ov{\mc{P}}(\Lambda) \psi_P(q,t) \vp_Q(q,t).
\]
\end{lem}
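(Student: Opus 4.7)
The plan is to prove the identity square-by-square using the compatibility condition of the probabilistic bijection, and then telescope contributions across the grid. By \eqref{eq_prob_bij_compatible}, for each type III square $\square$ with NW vertex $\mu$ and SE vertex $\nu$, we have $\mc{P}(\square)/\ov{\mc{P}}(\square) = \ov{\omega}_\la(\nu)/\omega_\la(\mu)$. Using the definitions of $\omega_\la, \ov{\omega}_\la$ from \S \ref{sec_weighted_sets} together with \eqref{eq_psi_vp_one_cell}, one verifies directly that
\[
\omega_\la(\mu) = \dfrac{1-q}{1-t}\,\psi_{\la/\mu}\vp_{\la/\mu} \;\; (\mu \in \D(\la)), \qquad \omega_\la(\la) = 1, \qquad \ov{\omega}_\la(\nu) = \dfrac{1-q}{1-t}\,\psi_{\nu/\la}\vp_{\nu/\la}.
\]

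Next, I would decorate the edges of the grid with $(q,t)$-factors: each vertical edge joining shapes $\kappa \subseteq \kappa'$ carries $\psi_{\kappa'/\kappa}$ (interpreted as $1$ when $\kappa = \kappa'$), and each horizontal edge joining $\kappa \subseteq \kappa'$ carries $\vp_{\kappa'/\kappa}$. For a square $\square$ with NW, NE, SW, SE vertices labeled $\mu, \rho, \la, \nu$, set
\[
R(\square) = \dfrac{\psi_{\nu/\rho}\vp_{\nu/\la}}{\psi_{\la/\mu}\vp_{\rho/\mu}}.
\]
A straightforward case analysis over the three square types yields $\mc{P}(\square)/\ov{\mc{P}}(\square) = \epsilon(\square)\,R(\square)$, where $\epsilon(\square) = (1-q)/(1-t)$ precisely when $\square$ is of type III with $\mu = \la$, and $\epsilon(\square) = 1$ otherwise. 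Type I is trivial, and the two subcases of type III follow immediately from the formulas above. Type II requires the identity $\psi_{\nu/\rho}\vp_{\nu/\la} = \psi_{\la/\mu}\vp_{\rho/\mu}$, which is obtained from the identity $\psi_{\nu/\la}\vp_{\nu/\rho} = \psi_{\rho/\mu}\vp_{\la/\mu}$ established in the proof of \eqref{eq_la_rho_qt} by swapping the roles of $\la$ and $\rho$.

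Finally, I would take the product over all squares of the grid. The correction factors contribute $((1-q)/(1-t))^n$, because the type III squares with $\mu = \la$ correspond bijectively to the $n$ ones of the permutation matrix $A_\sigma$. The product $\prod_\square R(\square)$ telescopes: every internal edge appears once in a numerator (as the right or bottom edge of one square) and once in a denominator (as the left or top edge of an adjacent square). What remains is the product of $\psi$-factors along the rightmost column, times the product of $\vp$-factors along the bottom row, divided by the analogous products over the top row and left column. The top row and left column consist entirely of the empty partition, contributing $1$; the right column and bottom row are the saturated chains corresponding to $P$ and $Q$ respectively, and so yield $\psi_P$ and $\vp_Q$. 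Assembling everything gives $\mc{P}(\Lambda)/\ov{\mc{P}}(\Lambda) = ((1-q)/(1-t))^n \psi_P \vp_Q$, which is equivalent to the claim.

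The main obstacle is the type II instance of the per-square identity in the second step, since it is the only part of the argument that is not a routine unpacking of definitions. Fortunately, it is exactly the calculation already carried out in the paper to dispense with \eqref{eq_la_rho_qt}, so no new ideas are needed; everything else is pure bookkeeping and telescoping.
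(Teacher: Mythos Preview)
Your proof is correct and follows essentially the same approach as the paper: both decorate edges with $\psi$ and $\vp$ factors and reduce to the same per-square identities (with the type II case handled by the calculation for \eqref{eq_la_rho_qt}). The only cosmetic difference is that the paper packages the telescoping as an induction over lattice paths from the northwest boundary path to the southeast boundary path, flipping one square at a time, whereas you take the product over all squares at once; these are two presentations of the same argument.
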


\begin{proof}
Let $L$ be a lattice path in $\Lambda$ from the northeast corner $(0,n)$ to the southwest corner $(n,0)$ consisting of unit steps to the south or west. Let $\omega(L)$ be the product of the weights of the edges in $L$, where a vertical edge $\mu \atop {| \atop \la}$ has weight $\psi_{\la/\mu}(q,t)$, and a horizontal edge $\mu - \rho$ has weight $\vp_{\rho/\mu}(q,t)$ (by definition, $\psi_{\la/\la} = \vp_{\la/\la} = 1$). Observe that the lattice path consisting of $n$ south steps followed by $n$ west steps has weight $\psi_P(q,t) \vp_Q(q,t)$.

Let $\nw(L)$ be the set of squares of $\Lambda$ between $L$ and the path consisting of $n$ west steps followed by $n$ south steps, and let $\nw(L) \cap \sigma$ be the subset of squares in $\nw(L)$ containing a 1 of the permutation matrix $A_\sigma$. Define a partial order on the set of lattice paths by $L \leq L'$ if $\nw(L) \subseteq \nw(L')$. We will prove by induction with respect to this partial order that
\begin{equation}
\label{eq_L_induction}
\dfrac{(1-t)^{|\nw(L) \cap \sigma|}}{(1-q)^{|\nw(L) \cap \sigma|}} \prod_{\square \in \nw(L)} \mc{P}(\square) = \omega(L) \prod_{\square \in \nw(L)} \ov{\mc{P}}(\square).
\end{equation}

The base case is the path consisting of $n$ west steps followed by $n$ south steps, for which both sides of \eqref{eq_L_induction} are equal to 1. For the induction step, it suffices to consider the case where $\nw(L')$ is obtained from $\nw(L)$ by adding a single square, as depicted below.

\begin{center}
\begin{tikzpicture}[scale=0.9]

\draw (0,0) -- (0,-1) -- (1,-1) -- (1,0) -- (0,0);
\draw[color=red] (0.1,0.2) -- (.9,0.2);
\draw[color=red] (-0.2,-.1) -- (-.2,-.9);
\node[color=red] at (.5,.6) {$\vp_{\rho/\mu}$}; 
\node[color=red] at (-.8,-.5) {$\psi_{\la/\mu}$};  
\node at (-0.2,0.2) {$\mu$};
\node at (1.2,0.2) {$\rho$};
\node at (-0.2,-1.2) {$\la$};
\node at (1.25,-1.25) {$\nu$};
\node at (.5,-.5) {$a$};

\begin{scope}[xshift=4cm]
\draw (0,0) -- (0,-1) -- (1,-1) -- (1,0) -- (0,0);
\draw[color=blue] (.1,-1.2) -- (.8,-1.2);
\draw[color=blue] (1.2,-.1) -- (1.2,-.9);
\node[color=blue] at (.5,-1.6) {$\vp_{\nu/\la}$}; 
\node[color=blue] at (1.8,-.5) {$\psi_{\nu/\rho}$};
\node at (-0.2,0.2) {$\mu$};
\node at (1.2,0.2) {$\rho$};
\node at (-0.2,-1.2) {$\la$};
\node at (1.25,-1.25) {$\nu$};
\node at (.5,-.5) {$a$};
\end{scope}

\draw[->] (1.7,-0.5) -- (3.3,-0.5);
\end{tikzpicture}
\end{center}

\noindent The path $L$ contains the red edges connecting $\la$ and $\rho$ with $\mu$, and the path $L'$ contains the blue edges connecting $\la$ and $\rho$ with $\nu$; otherwise the paths are the same. We assume that \eqref{eq_L_induction} holds for $L$, and show that it holds for $L'$ in each of the following four cases.

\bigskip

\noindent \textbf{Case 1:} $a = 0$, $\mu = \la = \rho = \nu$

In this case, replacing $L$ with $L'$ has no effect on \eqref{eq_L_induction}.

\bigskip

\noindent \textbf{Case 2:} $a = 0$, $\la \neq \rho$

In this case, $\square$ is of type II, so $\mc{P}(\square) = \ov{\mc{P}}(\square) = 1$, and the effect of replacing $L$ with $L'$ is to multiply the right-hand side of \eqref{eq_L_induction} by
\begin{equation}
\label{eq_ratio_psi_vp}
\dfrac{\psi_{\nu/\rho} \vp_{\nu/\la}}{\psi_{\la/\mu} \vp_{\rho/\mu}}.
\end{equation}
If $\mu = \la$, then $\nu = \rho$, and the expression \eqref{eq_ratio_psi_vp} is trivially equal to 1. Similarly, if $\mu = \rho$, then $\nu = \la$, and again \eqref{eq_ratio_psi_vp} is equal to 1. Finally, if $\mu \neq \la, \rho$, then \eqref{eq_ratio_psi_vp} is equal to 1 by the argument used in \S \ref{sec_qt_up_down} to prove the identity \eqref{eq_la_rho_qt}.

\bigskip

\noindent \textbf{Case 3:} $a = 0$, $\mu \lessdot \la = \rho \lessdot \nu$

In this case, replacing $L$ with $L'$ multiplies the left- and right-hand sides of \eqref{eq_L_induction} by
\[
 \mc{P}_\la(\mu \rightarrow \nu), \qquad \ov{\mc{P}}_\la(\mu \leftarrow \nu) \frac{\psi_{\nu/\la} \vp_{\nu/\la}}{\psi_{\la/\mu} \vp_{\la/\mu}},
\]
respectively. These two expressions are equal by the compatibility condition for the probabilistic bijection $\mc{P}_\la, \ov{\mc{P}}_\la$.

\bigskip

\noindent \textbf{Case 4:} $a = 1$, $\mu = \la = \rho \lessdot \nu$

In this case, replacing $L$ with $L'$ multiplies the left- and right-hand sides of \eqref{eq_L_induction} by
\[
\frac{1-t}{1-q} \mc{P}_\la(\la \rightarrow \nu), \qquad \ov{\mc{P}}_\la(\la \leftarrow \nu) \psi_{\nu/\la} \vp_{\nu/\la},
\]
respectively. As in the previous case, these two expressions are equal by the compatibility condition for the probabilistic bijection $\mc{P}_\la, \ov{\mc{P}}_\la$.
\end{proof}

Theorem \ref{thm_qrst_is_prob_bij} accomplishes our goal of proving the squarefree part of the Macdonald Cauchy identity by a probabilistic bijection. It also implies the more refined identities
\begin{align*}
&\dfrac{(1-t)^n}{(1-q)^n} \sum_{\sigma \in S_n} \mc{P}(\sigma \rightarrow P,Q) = \psi_P(q,t) \vp_Q(q,t), \\
&\sum_{P,Q} \ov{\mc{P}}(\sigma \leftarrow P,Q) \psi_P(q,t) \vp_Q(q,t) = \dfrac{(1-t)^n}{(1-q)^n}.
\end{align*}

The probabilities $\mc{P}(\sigma \rightarrow P,Q)$ and $\ov{\mc{P}}(\sigma \leftarrow P,Q)$ enjoy a symmetry property which generalizes that of Robinson--Schensted.

\begin{thm}
For any permutation $\sigma$ and pair of standard Young tableaux $P,Q$, we have
\[
\mc{P}(\sigma \rightarrow P,Q) = \mc{P}(\sigma^{-1} \rightarrow Q,P),
\]
and similarly for $\ov{\mc{P}}$.
\end{thm}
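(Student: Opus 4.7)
The plan is to exhibit an involution on the set of all growths which simultaneously interchanges $(\sigma, P, Q)$ with $(\sigma^{-1}, Q, P)$ and preserves both $\mc{P}(\Lambda)$ and $\ov{\mc{P}}(\Lambda)$. This involution is simply transposition across the main diagonal of the grid.

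\textbf{Step 1: Transposition of growths.} For a growth $\Lambda = (\Lambda_{ij})$ associated with $\sigma$, define $\Lambda^T$ by $(\Lambda^T)_{ij} = \Lambda_{ji}$. I would first check that $\Lambda^T$ is a growth associated with $\sigma^{-1}$. The inclusion axiom transfers directly since transposition swaps horizontal and vertical steps. For the cardinality axiom, note that $A_{\sigma^{-1}} = (A_\sigma)^T$, and the NW region of $(i,j)$ in the grid is carried to the NW region of $(j,i)$ by the diagonal reflection, so $|(\Lambda^T)_{ij}| = |\Lambda_{ji}|$ equals the number of $1$'s of $A_\sigma$ NW of $(j,i)$, which equals the number of $1$'s of $A_{\sigma^{-1}}$ NW of $(i,j)$. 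Moreover, the right column of $\Lambda$ becomes the bottom row of $\Lambda^T$, and vice versa, so if $\Lambda : \sigma \to P, Q$ then $\Lambda^T : \sigma^{-1} \to Q, P$. Since $(\Lambda^T)^T = \Lambda$, transposition provides a bijection between the growths from $\sigma$ to $P,Q$ and the growths from $\sigma^{-1}$ to $Q,P$.

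\textbf{Step 2: Invariance of local weights.} I would then analyze the effect of transposition on each square. A square $\square$ with vertices $(\text{NW},\text{NE},\text{SW},\text{SE}) = (\mu,\rho,\la,\nu)$ becomes a square $\square^T$ with vertices $(\mu,\la,\rho,\nu)$: only the NE and SW labels are interchanged. For a type I square all four labels are equal, so $\square = \square^T$. For a type II square, the label swap just interchanges $\la$ and $\rho$, producing another type II square, and in both cases $\mc{P}(\square) = \ov{\mc{P}}(\square) = 1$. The crucial observation is the type III case: here $\rho = \la$ by definition, so the labels in $\square^T$ are $(\mu,\la,\la,\nu)$, i.e.\ $\square^T$ is again a type III square with the \emph{same} $\la,\mu,\nu$. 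Consequently $\mc{P}(\square^T) = \mc{P}_\la(\mu \to \nu) = \mc{P}(\square)$ and similarly for $\ov{\mc{P}}$. Taking the product over all squares gives $\mc{P}(\Lambda^T) = \mc{P}(\Lambda)$ and $\ov{\mc{P}}(\Lambda^T) = \ov{\mc{P}}(\Lambda)$.

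\textbf{Step 3: Conclusion.} Summing over the bijection from Step 1,
\[
\mc{P}(\sigma \to P,Q) = \sum_{\Lambda:\sigma\to P,Q} \mc{P}(\Lambda) = \sum_{\Lambda':\sigma^{-1}\to Q,P} \mc{P}(\Lambda') = \mc{P}(\sigma^{-1}\to Q,P),
\]
and the identical argument works for $\ov{\mc{P}}$. There is no real obstacle here; the entire content of the theorem is that the probabilistic local growth rules are symmetric in the NE and SW vertices of each square, which is visible directly from the fact that a type III square is already symmetric under this swap (both of those vertices equal $\la$). This is the $(q,t)$-analogue of the classical observation that growth-diagram formulations of RS make the symmetry $\sigma \leftrightarrow \sigma^{-1}$ transparent.
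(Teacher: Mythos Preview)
Your proof is correct and follows exactly the approach in the paper: transpose the growth diagram to exchange $(\sigma,P,Q)$ with $(\sigma^{-1},Q,P)$, and observe that the local probabilities are unchanged because transposition only swaps the NE and SW vertices of each square, which coincide in type III squares. You have simply written out in detail what the paper compresses into two sentences.
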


\begin{proof}
If $\Lambda$ is a growth from $\sigma$ to $P,Q$, then its transpose $\Lambda^t$ (i.e., the reflection of $\Lambda$ in the main diagonal) is a growth from $\sigma^{-1}$ to $Q,P$. It is clear that $\mc{P}(\square)$ is unchanged by swapping the partitions at the northeast and southwest corners of $\square$, so $\mc{P}(\Lambda) = \mc{P}(\Lambda^t)$, and the result follows.
\end{proof}

The discussion in \S \ref{sec: Fomin growth diagrams} (and especially Lemma \ref{lem_growth_insertion}) explains how to translate the $(q,t)$-local growth rules into a probabilistic insertion algorithm. Recall that for a semistandard Young tableau $T$, $T^{(k)}$ is the shape of the subtableau consisting of entries at most $k$.

\begin{defn}
Let $T$ be a partial standard Young tableau, and let $k$ be a number which is not an entry of $T$. The \emph{$(q,t)$-Robinson--Schensted (\qrst) insertion} of $k$ into $T$, denoted
\[
T \xleftarrow{\qrst} k,
\]
is the probability distribution computed as follows:
\begin{itemize}
\item For each $\nu \in \U(T^{(k)})$, place $k$ in the cell $\nu/T^{(k)}$ with probability $\mc{P}_{T^{(k)}}(T^{(k)} \rightarrow \nu)$.

\item Suppose an entry $z$ of $T$ is bumped by the placement of a smaller number. For each $\nu \in \mc{U}(T^{(z)})$, place $z$ in the cell $\nu/T^{(z)}$ with probability $\mc{P}_{T^{(z)}}(T^{(z-1)} \rightarrow \nu)$.
\end{itemize}
\end{defn}

\begin{ex}
\label{ex_qrst_insertion}
The insertion $\young(134,257) \xleftarrow{\qrst} 6$ produces
\[
\begin{array}{llll}
\young(1346,257) & \text{ with probability } p_{0,0}[(3,2)] &= \dfrac{(1-t)(1-qt^2)}{(1-qt)(1-q^3t^2)} \bigskip \\
\young(1357,256) & \text{ with probability } p_{0,1}[(3,2)] p_{1,0}[(3,3)] &= qt^2 \dfrac{(1-q)(1-t)}{(1-qt)(1-q^3t^2)} \bigskip \\
\young(134,256,7) & \text{ with probability } p_{0,1}[(3,2)] p_{1,1}[(3,3)] &= t \dfrac{(1-q)(1-t)}{(1-q^2t)(1-q^3t^2)} \bigskip \\
\young(134,257,6) & \text{ with probability } p_{0,2}[(3,2)] &= t^2 \dfrac{(1-q^2)(1-q^3t)}{(1-q^2t)(1-q^3t^2)}.
\end{array}
\]
We have computed these probabilities using the formulas of Proposition \ref{prop_explicit_formulas}.
\end{ex}

It follows from the preceding definitions and the discussion in \S \ref{sec: Fomin growth diagrams} that the probabilities $\mc{P}(\sigma \rightarrow P,Q)$ can be computed by recursively inserting $\sigma_1, \ldots, \sigma_n$ into the empty tableau according to $\qrst$, and summing the probabilities of all ``insertion paths'' that lead to the pair $P,Q$. For the permutations in the symmetric group $S_2$, these probabilities appeared in Figure \ref{fig_n=2} in the Introduction.

It is interesting to consider the insertion of the identity permutation. In this case, no bumping occurs, since at each step the number being inserted is larger than all entries currently in the tableau. This means that only the probabilities $\mc{P}_\la(\la \rightarrow \nu)$ come into play, and the recording tableau is equal to the insertion tableau. Using the definition of the probabilities $\mc{P}_\la(\la \rightarrow \nu)$ (Definition \ref{defn_r=0}), we obtain the following result.

\begin{lem}
\label{lem_id_probs}
For a pair $(P,Q)$ of standard tableaux of the same shape, we have
\begin{equation}
\label{eq_id_probs}
\mc{P}(\id \rightarrow P,Q) = \begin{cases}
\ds t^{n(\la)} \prod_{i = 1}^{|\la|} \alpha_{P^{(i)}/P^{(i-1)}}(q,t) & \text{ if } P = Q \text{ has shape } \la \\
0 & \text{ otherwise}.
\end{cases}
\end{equation}
\end{lem}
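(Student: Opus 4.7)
The plan is to show that a growth from $\id \in S_n$ to a pair $(P,Q)$ exists iff $P=Q$, that such a growth is unique when it exists, and then to read off its probability directly from Definition \ref{defn_r=0}.

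First I would analyze the vertex-sizes. Since the permutation matrix of $\id$ places its $1$'s on the diagonal, the defining condition of a growth forces $|\Lambda_{i,j}| = \min(i,j)$ for every vertex. Looking at any square in position $(i,j)$ with $i<j$, the four vertex-sizes are $(i-1,\,i-1,\,i,\,i)$ in the order (NW, NE, SW, SE), so the containments $\mu\subseteq\rho$ and $\la\subseteq\nu$ force $\mu=\rho$ and $\la=\nu$. This is a type I or type II square (no $1$ in the cell) and contributes a factor of $1$ to both $\mc{P}(\Lambda)$ and $\ov{\mc{P}}(\Lambda)$. Symmetrically, squares strictly below the diagonal have $\mu=\la$ and $\rho=\nu$ and contribute $1$ as well. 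The only non-trivial squares are the diagonal ones at position $(k,k)$, whose sizes are $(k-1,k-1,k-1,k)$; these are type III squares with $\mu=\la=\rho=\Lambda_{k-1,k-1}$ and $\nu=\Lambda_{k,k}\gtrdot\la$, and they contribute the local factor $\mc{P}_\la(\la\to\nu)$.

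Next I would propagate the row/column equalities I just established. The identity $\la=\nu$ in each above-diagonal square gives $\Lambda_{i,i}=\Lambda_{i,i+1}=\cdots=\Lambda_{i,n}$, so the right column of $\Lambda$ (which encodes $P$) is exactly the diagonal chain $\emptyset=\Lambda_{0,0}\lessdot\Lambda_{1,1}\lessdot\cdots\lessdot\Lambda_{n,n}$. The identity $\mu=\la$ in each below-diagonal square dually yields $\Lambda_{k,k}=\Lambda_{k+1,k}=\cdots=\Lambda_{n,k}$, so the bottom row (encoding $Q$) is the same chain. Consequently $P=Q$, and moreover the growth $\Lambda$ is uniquely determined by the standard Young tableau $P$: the diagonal chain is $P^{(0)}\lessdot P^{(1)}\lessdot\cdots\lessdot P^{(n)}$ and everything off-diagonal is forced by the row/column equalities. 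If $P\neq Q$ then no growth $\Lambda:\id\to P,Q$ exists, so $\mc{P}(\id\to P,Q)=0$, establishing the second case of the lemma.

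Finally, for $P=Q$ of shape $\la$, the unique growth $\Lambda$ has
\[
\mc{P}(\Lambda)\;=\;\prod_{k=1}^{n}\mc{P}_{P^{(k-1)}}\!\bigl(P^{(k-1)}\to P^{(k)}\bigr)\;=\;\prod_{k=1}^{n} t^{n(P^{(k)}/P^{(k-1)})}\,\alpha_{P^{(k)}/P^{(k-1)}}(q,t),
\]
by Definition \ref{defn_r=0}. The exponent of $t$ telescopes: $\sum_{k=1}^n n(P^{(k)}/P^{(k-1)})=n(P^{(n)})-n(P^{(0)})=n(\la)$. This yields the claimed formula. The only step that demands any care is the square-by-square analysis of the first paragraph; the rest is formal, since once non-diagonal squares are shown to be type I/II the entire growth is pinned down by the diagonal chain.
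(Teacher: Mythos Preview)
Your proof is correct and takes essentially the same approach as the paper: the paper argues via the insertion description (inserting $\id$ causes no bumping, so only the $\mu=\la$ local rule is invoked and the recording tableau equals the insertion tableau), while you carry out the equivalent growth-diagram analysis directly, showing that off-diagonal squares are forced to be type I/II and the diagonal squares are type III with $\mu=\la$. One tiny slip: to get $\Lambda_{k,k}=\Lambda_{k+1,k}=\cdots=\Lambda_{n,k}$ you should cite $\rho=\nu$ (i.e., $\Lambda_{i-1,j}=\Lambda_{i,j}$) rather than $\mu=\la$ in the below-diagonal squares, but both equalities hold there and your conclusion is unaffected.
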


By Lemma \ref{lem_q=t=1}, we have $\lim_{q \rightarrow 1} \alpha_{\nu/\la}(q,q) = H_\la/H_\nu$, so in this specialization, \eqref{eq_id_probs} simplifies to
\begin{equation}
\label{eq_id_probs_q=t=1}
\mc{P}(\id \rightarrow P,Q)|_{q=t \rightarrow 1} = \begin{cases}
\dfrac{1}{H_\la} & \text{ if } P = Q \text{ has shape } \la \medskip \\
0 & \text{ otherwise}.
\end{cases}
\end{equation}
By the hook-length formula, this probability may also be expressed as $f_\la/|\la|!$. We conclude that in the $q = t \rightarrow 1$ specialization, the probability that the identity permutation inserts to a pair of SYTs of shape $\la$ is given by the Plancherel measure $(f_\la)^2/|\la|!$.

\section{Specializations}
\label{sec_specs}

\subsection{Column insertion: inverting $q$ and $t$}
\label{sec_column_insertion}

The Macdonald polynomials $P_\la$ and $Q_\la$ behave very simply under the substitution $(q,t) \mapsto (q^{-1},t^{-1})$:
\[
P_\la(\mb{x};q^{-1},t^{-1}) = P_\la(\mb{x};q,t), \quad\quad Q_\la(\mb{x};q^{-1},t^{-1}) = (qt^{-1})^{|\la|} Q_\la(\mb{x}; q,t).
\]
If we take the monomial expansions of Theorem \ref{thm_Mac_monomial} as the definition of $P_\la$ and $Q_\la$, this follows from the identities
\begin{equation}
\label{eq_psi_phi_inverse}
\psi_{\la/\mu}(q^{-1},t^{-1}) = \psi_{\la/\mu}(q,t), \quad\quad \vp_{\la/\mu}(q^{-1},t^{-1}) = (qt^{-1})^{|\la/\mu|} \vp_{\la/\mu}(q,t).
\end{equation}

It also follows from \eqref{eq_psi_phi_inverse} that the weights $\omega, \ov{\omega}$ of $\mu \in \mc{D}^*(\la)$ and $\nu \in \mc{U}(\la)$ are unchanged by the substitution $(q,t) \mapsto (q^{-1}, t^{-1})$. The expressions $\mc{P}_\la(\mu \rightarrow \nu)$ and $\ov{\mc{P}}_\la(\mu \leftarrow \nu)$, however, are not invariant under this substitution. For reasons that will soon become clear, we write $\mc{P}_\la^\col, \ov{\mc{P}}_\la^\col$ for the expressions obtained by substituting $(q,t) \mapsto (q^{-1}, t^{-1})$ in the definition of $\mc{P}_\la$ and $\ov{\mc{P}}_\la$. Similarly, using the notation introduced in \S\ref{sec_formulas}, we write
\[
p^\col_{r,s}(q,t) = p_{r,s}(q^{-1},t^{-1}), \quad\quad \ov{p}^\col_{r,s}(q,t) = \ov{p}_{r,s}(q^{-1},t^{-1}).
\]
(When we want to emphasize the dependence on $\la$, we will write $p^{\col}_{r,s}[\la], \ov{p}^{\col}_{r,s}[\la]$). By Remark \ref{rem_row_probs}, the expressions $p_{r,s}(q,t)$ and $\ov{p}_{r,s}(q,t)$ are honest probabilities whenever $q,t \in [0,1)$ or $q,t \in (1,\infty)$, so the same is true of $p_{r,s}^\col$ and $\ov{p}_{r,s}^\col$. In fact, these two sets of expressions differ by a monomial in $q$ and $t$.

\begin{lem}
\label{lem_col_probs}
For a partition $\la$ with parameters $(h_1, \ldots, h_d)$ and $(v_1, \ldots, v_d)$, we have
\begin{equation}
\label{eq_col_probs}
p_{r,s}^\col(q,t) = \begin{cases}
\ds q^{h_{s+1,d}} \alpha_s & \text{ if } r = 0 \bigskip \\
\ds \tau^\col_{r,s} \dfrac{\alpha_s \beta_r}{\gamma'_{r,s}} & \text{ if } r > 0,
\end{cases}
\quad\quad\quad
\ov{p}^\col_{r,s}(q,t) = \begin{cases}
\ds q^{h_{s+1,d}} \ov{\alpha}_s & \text{ if } r = 0 \bigskip \\
\ds \tau^\col_{r,s} \dfrac{\ov{\alpha}_s \ov{\beta}_r}{\gamma'_{r,s}} & \text{ if } r > 0,
\end{cases}
\end{equation}
where
\[
\tau^\col_{r,s} = \begin{cases}
q^{-1+h_{r,s}} \cdot t^{1 + 2v_{r+1,s}} & \text{ if } 0 < r \leq s \\
q^{h_{s+1,r-1}} & \text{ if } r > s.
\end{cases}
\]
\end{lem}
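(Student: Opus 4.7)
The plan is a direct computation starting from the explicit formulas in Proposition \ref{prop_explicit_formulas}. The key observation is that under the substitution $(q,t) \mapsto (q^{-1},t^{-1})$, each factor transforms as
\[
[i,j] = 1 - q^i t^j \;\longmapsto\; 1 - q^{-i}t^{-j} = -q^{-i}t^{-j} \cdot [i,j].
\]
Since $\alpha_s, \ov{\alpha}_s, \beta_r, \ov{\beta}_r,$ and $\gamma'_{r,s}$ are each ratios with equal numbers of $[\cdot,\cdot]$ factors in numerator and denominator, the signs cancel and each picks up a monomial prefactor $q^A t^B$, where $A$ (respectively $B$) is the excess of first (resp. second) coordinates appearing in the denominator over those in the numerator. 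Because the $(\cdot)^\pm$ shifts alter the coordinates by opposite amounts in numerator and denominator with equal multiplicity, the shift for $\ov{\alpha}_s$ (resp. $\ov{\beta}_r$) coincides with that for $\alpha_s$ (resp. $\beta_r$); this immediately reduces the proof of the $\ov{p}^{\col}_{r,s}$ formula to that of $p^{\col}_{r,s}$.

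First I would compute these exponent shifts by telescoping. For $\alpha_s$, the two products in Proposition~\ref{prop_explicit_formulas} yield, after elementary telescoping, a shift of $q^{h_{s+1,d}} t^{v_{1,s}}$. For $\beta_r$ the analogous telescoping (treating the central factors $[0,v_r]^+/[0,1]^+$ and $[h_r,0]^-/[1,0]^-$ separately) gives $q^{1 - h_{r,d}} t^{1 - v_{1,r}}$. For $\gamma'_{r,s}$ there are two cases: when $0 < r \leq s$ one obtains the shift $q^{2 - 2h_{r,s}} t^{-2 v_{r+1,s}}$, and when $r > s$ one obtains $q^{-2 h_{s+1,r-1}} t^{2 - 2v_{s+1,r}}$.

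Next I would assemble the pieces. For $r = 0$, the factor $t^{v_{1,s}}$ in $p_{0,s}$ inverts to $t^{-v_{1,s}}$, which combines with the shift from $\alpha_s$ to give exactly the monomial $q^{h_{s+1,d}}$ claimed. For $r > 0$ the net shift of $\alpha_s \beta_r / \gamma'_{r,s}$ must be combined with $\tau_{r,s}(q^{-1},t^{-1})$. Using the identities
\[
h_{s+1,d} - h_{r,d} = \begin{cases} -h_{r,s} & \text{if } r \leq s \\ h_{s+1,r-1} & \text{if } r > s \end{cases}
\qquad \text{and} \qquad
v_{1,s} - v_{1,r} = \begin{cases} v_{r+1,s} & \text{if } r \leq s \\ -v_{s+1,r} & \text{if } r > s, \end{cases}
\]
a short case analysis yields the prefactors $\tau^{\col}_{r,s}$ claimed in the lemma.

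The main obstacle is nothing more than careful bookkeeping: keeping track of the telescoping sums across the pieces of $\beta_r$ (whose central factors are asymmetric between $q$ and $t$), and correctly handling the two cases of $\gamma'_{r,s}$. Once these shifts are tabulated, the verification that the combined exponents match $\tau^{\col}_{r,s}$ is mechanical.
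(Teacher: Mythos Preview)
Your proposal is correct and follows essentially the same approach as the paper: apply the identity $[i,j]_{q^{-1},t^{-1}} = -q^{-i}t^{-j}[i,j]_{q,t}$ to the formulas of Proposition~\ref{prop_explicit_formulas}, record the resulting monomial shifts for $\alpha_s$, $\beta_r$, and $\gamma'_{r,s}$ (which are exactly the ones you list), and combine them with $\tau_{r,s}(q^{-1},t^{-1})$ to obtain $\tau^{\col}_{r,s}$. Your additional remark that the $(\cdot)^\pm$ shifts cancel, so the barred quantities acquire the same monomial prefactors as the unbarred ones, is a nice way to halve the work.
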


\begin{proof}
Applying the identity
\[
[i,j]_{q^{-1},t^{-1}} = -q^{-i}t^{-j}[i,j]_{q,t}
\]
to the formulas in Proposition \ref{prop_explicit_formulas}, we find that
\[
\begin{array}{lcl}
\alpha_s(q^{-1},t^{-1}) = q^{h_{s+1,d}} \cdot t^{v_{1,s}} \alpha_s(q,t) && \ov{\alpha}_s(q^{-1},t^{-1}) = q^{h_{s+1,d}} \cdot t^{v_{1,s}} \ov{\alpha}_s(q,t) \bigskip \\
\beta_r(q^{-1},t^{-1}) = q^{-h_{r,d}+1} \cdot t^{-v_{1,r}+1} \beta_r(q,t) && \ov{\beta}_r(q^{-1},t^{-1}) = q^{-h_{r,d}+1} \cdot t^{-v_{1,r}+1} \ov{\beta}_r(q,t)
\end{array}
\]
\[
\gamma'_{r,s}(q^{-1},t^{-1}) = \begin{cases}
q^{2-2h_{r,s}} \cdot t^{-2v_{r+1,s}} \gamma'_{r,s}(q,t) & \text{ if } 0 < r \leq s \\
q^{-2h_{s+1,r-1}} \cdot t^{2-2v_{s+1,r}} \gamma'_{r,s}(q,t) & \text{ if } r > s.
\end{cases}
\]
Now \eqref{eq_col_probs} follows from \eqref{eq_row_probs}.
\end{proof}

We observed at the end of \S\ref{sec_formulas} that $p_{r,s}(0,0) = \delta_{r,s}$, so the $q = t = 0$ specialization of $\qrst$ is the row insertion version of Robinson--Schensted. It follows from Lemma \ref{lem_col_probs} that
\[
\lim_{q \rightarrow \infty} p_{r,s}(q,q) = p^\col_{r,s}(0,0) = \delta_{r-1,s}
\]
where we interpret $\delta_{-1,s}$ as $\delta_{d,s}$. Thus, the $q = t \rightarrow \infty$ limit of $\qrst$ is the column insertion version of Robinson--Schensted.\\

Another useful feature of the column insertion probabilities is that interchanging $q$ and $t$ relates the row insertion probabilities associated to $\la$ and the column insertion probabilities associated to the conjugate partition $\la'$.

\begin{lem}
\label{lem_conjugate_probs}
Let $\la$ be a partition with $d$ distinct part sizes. For $r,s \in \{0, \ldots, d\}$, we have
\[
p_{r,s}[\la](t,q) = p_{d+1-r,d-s}^{\col}[\la'](q,t), \quad\quad\quad \ov{p}_{r,s}[\la](t,q) = \ov{p}_{d+1-r,d-s}^{\col}[\la'](q,t),
\]
where we interpret the first subscript modulo $d+1$ (e.g., $p^{\col}_{d+1,s}[\la'] = p^{\col}_{0,s}[\la']$).
\end{lem}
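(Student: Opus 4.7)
The plan is to verify the identities by direct computation from the explicit formulas in Proposition \ref{prop_explicit_formulas} and Lemma \ref{lem_col_probs}, using Lemma \ref{lem_conjugation} to handle the $\alpha,\beta$ factors and a bookkeeping argument to handle the $\gamma'$ and $\tau$ factors. The intuition is that row and column insertion versions of $\qrst$ should be exchanged by conjugation of Young diagrams together with the swap $q \leftrightarrow t$, generalizing the familiar symmetry of ordinary RS in the specializations $q = t \to 0$ and $q = t \to \infty$.

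First, I would record how conjugation acts on the parameters. If $\la$ has parameters $(h_1,\ldots,h_d)$ and $(v_1,\ldots,v_d)$, then reading the boundary of $\la'$ from its southeast corner yields $h'_i = v_{d+1-i}$ and $v'_i = h_{d+1-i}$, hence $h'_{i,j} = v_{d+1-j,d+1-i}$ and $v'_{i,j} = h_{d+1-j,d+1-i}$. Reflecting individual cells across the main diagonal then shows $(\la^{(+s)})' = (\la')^{(+(d-s))}$ for all $s \in \{0,\ldots,d\}$ and $(\la^{(-r)})' = (\la')^{(-(d+1-r))}$ for $r \in \{1,\ldots,d\}$; the case $r = 0$ reduces to the trivial identity $\la^{(-0)} = \la$, which explains why the lemma asks us to interpret $d+1$ as $0$. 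Combined with Lemma \ref{lem_conjugation}, these immediately give
\[
\alpha_s[\la](t,q) = \alpha_{d-s}[\la'](q,t), \qquad \beta_r[\la](t,q) = \beta_{d+1-r}[\la'](q,t),
\]
together with the analogous identities for $\ov{\alpha}$ and $\ov{\beta}$.

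The remaining task is to match the factors $\gamma'_{r,s}$ and the monomial prefactors $\tau_{r,s}$. Using the elementary identities $[i,j]_{t,q} = [j,i]_{q,t}$, $[i,j]^+_{t,q} = [j,i]^-_{q,t}$, and $[i,j]^-_{t,q} = [j,i]^+_{q,t}$, a case analysis (splitting on $0 < r \leq s$ versus $r > s$, which cases are \emph{exchanged} by the transformation $r \mapsto d+1-r, s \mapsto d-s$) gives
\[
\gamma'_{r,s}[\la](t,q) = \gamma'_{d+1-r,d-s}[\la'](q,t), \qquad \tau_{r,s}[\la](t,q) = \tau^{\col}_{d+1-r,d-s}[\la'](q,t).
\]
For instance, if $0 < r \leq s$ then $r' = d+1-r > s' = d-s$, and the substitutions $h'_{s'+1,r'-1} = v_{r+1,s}$, $v'_{s'+1,r'} = h_{r,s}$ match the first-case formula for $\gamma'_{r,s}[\la]$ with the second-case formula for $\gamma'_{r',s'}[\la']$. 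Combining these four identities via \eqref{eq_row_probs} and \eqref{eq_col_probs} settles the lemma for $r \geq 1$. The case $r = 0$ is an analogous but simpler computation: the prefactor $t^{v_{1,s}}$ in $p_{0,s}[\la]$ becomes $q^{v_{1,s}}$ under $(q,t) \mapsto (t,q)$, and since $h'_{d-s+1,d} = v_{1,s}$, this matches the prefactor $q^{h'_{d-s+1,d}}$ in $p^{\col}_{0,d-s}[\la']$.

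The main obstacle is not conceptual but organizational: one must carefully track the inversion of indices $(i,j) \mapsto (d+1-j,d+1-i)$ under conjugation, and recognize that the two piecewise cases defining $\gamma'$ and $\tau$ for $\la$ match the opposite cases for $\la'$. Once the transformation rules on parameters and corner indices are in place, each identity is a routine term-by-term verification.
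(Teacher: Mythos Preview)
Your proposal is correct and follows essentially the same approach as the paper: identify how conjugation permutes the corner indices and parameters, apply Lemma \ref{lem_conjugation} for the $\alpha,\beta$ factors, and then verify the matching of $\gamma'_{r,s}$ and $\tau_{r,s}$ with $\tau^{\col}$ by a case analysis using the parameter swap. Your write-up is in fact a bit more explicit than the paper's (you spell out the case-swap for $\gamma'$ and the $r=0$ prefactor check), but the argument is the same.
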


\begin{proof}
It is clear that
\[
(\la^{(+s)})' = (\la')^{(+(d-s))}, \quad\quad\quad (\la^{(-r)})' = (\la')^{(-(d+1-r))}
\]
for $r,s \in \{0, \ldots, d\}$ (with $(\la')^{(-(d+1))} = (\la')^{(-0)} = \la'$). Thus, by Lemma \ref{lem_conjugation}, we have
\[
\alpha_s[\la](t,q) = \alpha_{d-s}[\la'](q,t), \quad\quad \beta_r[\la](t,q) = \beta_{d+1-r}[\la'](q,t),
\]
and similarly for $\ov{\alpha}$ and $\ov{\beta}$.

If $\la$ has horizontal parameters $(h_1, \ldots, h_d)$ and vertical parameters $(v_1, \ldots, v_d)$, then $\la'$ has horizontal parameters $(v_d, \ldots, v_1)$ and vertical parameters $(h_d, \ldots, h_1)$. Using this, one verifies that
\[
(\gamma')_{r,s}[\la](t,q) = (\gamma')_{d+1-r,d-s}[\la'](q,t), \quad\quad \tau_{r,s}[\la](t,q) = \tau^{\col}_{d+1-r,d-s}[\la'](q,t),
\]
and the result follows from \eqref{eq_row_probs} and \eqref{eq_col_probs}.
\end{proof}

\subsection{$q$-Whittaker and Hall--Littlewood specializations}
\label{sec_qWhit_HL}

We now consider the $t = 0$ ($q$-Whittaker) and $q = 0$ (Hall--Littlewood) specializations of the probabilities $p_{r,s}(q,t)$ and $p^\col_{r,s}(q,t)$, and we describe the corresponding $q$- and $t$-deformations of Robinson--Schensted. To describe these specializations, it is convenient to set
\begin{equation}
\label{eq_infinite_boundary}
q^{k+h_0} = t^{k+v_{d+1}} = 0
\end{equation}
for any integer $k$. If we assume $q,t \in [0,1)$ (which is necessary if we want the specialized probabilities to lie in $[0,1]$), this can be interpreted as adding an infinite row beneath the first row of $\la$, and an infinite column to the left of the first column of $\la$.

Recall that $T^{(z)}$ is the shape of the subtableau of $T$ consisting of entries at most $z$, so $T^{(z)}_i$ is the number of entries in row $i$ of $T$ which are at most $z$.

\begin{lem}[$q$-Whittaker specializations] Suppose $\la$ has parameters $(h_1, \ldots, h_d)$ and $(v_1, \ldots, v_d)$.
\label{lem_qWhit_probs}
\begin{enumerate}
\item
The probabilities $p_{r,s}(q,0)$ are given by
\[
p_{r,s}(q,0) = \delta_{r,s}
\quad\quad\quad\quad \text{ if } r = 0 \text{ or } v_r > 1,
\]
\smallskip
\[
p_{r,s}(q,0) = \begin{cases}
\dfrac{q(1-q^{h_{r-1}})}{1-q^{1+h_{r-1}}} & \text{ if } s = r-1 \medskip \\
\dfrac{1-q}{1-q^{1+h_{r-1}}} & \text{ if } s = r \medskip \\
0 & \text{ otherwise}
\end{cases}
\quad\quad\quad\quad \text{ if } r > 0 \text{ and } v_r = 1.
\]

These probabilities give rise to a $q$-deformation of row insertion in which $T \leftarrow k$ is computed by the following rules:
\begin{itemize}
\item Insert $k$ into the first row.
\item If $z$ is bumped from row $i$, insert $z$ into
\[
\begin{cases}
\text{ row } i \text{ with probability } & \dfrac{q(1-q^{T^{(z)}_{i-1} - T^{(z)}_i})}{1-q^{1+T^{(z)}_{i-1} - T^{(z)}_i}} \bigskip \\
\text{ row } i+1 \text{ with probability } & \dfrac{1-q}{1-q^{1+T^{(z)}_{i-1} - T^{(z)}_i}}.
\end{cases}
\]
\end{itemize}

\item
The probabilities $p^\col_{r,s}(q,0)$ are given by
\[
p^\col_{0,s}(q,0) = q^{h_{s+1,d}} (1-q^{h_s}),
\]
\medskip
\[
p^\col_{r,s}(q,0) = \begin{cases}
q^{h_{s+1,r-1}}(1-q^{h_s}) & \text{ if } s < r \medskip \\
0 & \text{ if } s \geq r
\end{cases}
\quad\quad\quad\quad \text{ if } r > 0 \text{ and } v_r > 1,
\]
\medskip
\[
p^\col_{r,s}(q,0) = \begin{cases}
\dfrac{q^{h_{s+1,r-1}}(1-q)(1-q^{h_s})}{1-q^{1+h_{r-1}}} & \text{ if } s < r-1 \medskip \\
\dfrac{1-q^{h_{r-1}}}{1-q^{1+h_{r-1}}} & \text{ if } s = r-1 \medskip \\
0 & \text{ if } s \geq r
\end{cases}
\quad\quad\quad\quad \text{ if } r > 0 \text{ and } v_r = 1.
\]

These probabilities give rise to a $q$-deformation of column insertion in which $T \leftarrow k$ is computed by the following rules:
\begin{itemize}
\item Insert $k$ into row $j$ with probability
\[
q^{T^{(k)}_j}(1-q^{T^{(k)}_{j-1}-T^{(k)}_j}).
\]
\item If $z$ is bumped from row $i$, insert $z$ into row $j \leq i$ with probability
\[
\begin{cases}
\dfrac{1-q^{T^{(z)}_{i-1}-T^{(z)}_i}}{1-q^{1+T^{(z)}_{i-1}-T^{(z)}_i}} & \text{ if } j = i \bigskip \\
\dfrac{(1-q)q^{T^{(z)}_j - T^{(z)}_i}(1-q^{T_{j-1}^{(z)} - T_j^{(z)}})}{1-q^{1+T^{(z)}_{i-1}-T^{(z)}_i}} & \text { if } j < i.
\end{cases}
\]
\end{itemize}
\end{enumerate}
\end{lem}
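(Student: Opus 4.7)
Both parts of the lemma are proved by directly specializing at $t = 0$ the explicit formulas of Proposition \ref{prop_explicit_formulas} (for part 1) and Lemma \ref{lem_col_probs} (for part 2), then interpreting the resulting probabilities as insertion rules via Lemma \ref{lem_growth_insertion}. The key observation is that each bracket $[i,j] = 1 - q^i t^j$ collapses at $t = 0$ to either $1$ (when $j \geq 1$) or $1 - q^i$ (when $j = 0$), and by the convention \eqref{eq_infinite_boundary} we also have $1 - q^{h_0} = 1$; this reduces each rational expression to a short explicit product.

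The bulk of the proof consists of simplifying the building blocks $\alpha_s$, $\beta_r$, $\gamma'_{r,s}$, and $\tau_{r,s}$ (and their column analogues) at $t = 0$. The main case split is driven by $\tau_{r,s}$: for $r > 0$, its $t$-exponent forces $\tau_{r,s}(q,0) = 0$ unless either $s = r$, or $s = r-1$ together with $v_r = 1$. A secondary split comes from $\beta_r(q,0)$, which equals $1/(1-q)$ when $v_r > 1$ but equals $1/(1-q^{1+h_{r-1}})$ when $v_r = 1$ (depending on whether the central factor $[0,v_r]^+/[0,1]^+$ simplifies to $1/(1-q)$ or to $1$, and whether the $i = r-1$ term of the first product of $\beta_r$ contributes). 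Multiplying together the surviving factors yields the formulas for $p_{r,s}(q,0)$; the analogous analysis of $\tau^{\col}_{r,s}$ shows that every $s < r$ gives a nonzero contribution in the column case, producing the richer set of insertion probabilities in part (2).

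The translation into insertion algorithms is then routine via Lemma \ref{lem_growth_insertion}. When $z$ is bumped from row $i$ of $T^{(z)}$, the inner corner corresponds to $\la^{(-r)}$ with $i = v_{1,r}$, and the candidate outer corners $\la^{(+s)}$ lie in rows $j = v_{1,s}+1$. The geometric quantities appearing in the stated insertion rules identify with the shape parameters as $T^{(z)}_{i-1} - T^{(z)}_i = h_{r-1}$, $T^{(z)}_j - T^{(z)}_i = h_{s+1,r-1}$, and $T^{(z)}_{j-1} - T^{(z)}_j = h_s$; substituting these back into the specialized formulas recovers the stated insertion rules exactly. The main obstacle is the case-by-case bookkeeping driven by whether various $v_i$ or $h_i$ equal $1$ or exceed it, especially at the boundaries $r = 1$ and $s = 0$, where the conventions $q^{h_0} = 0$ and the empty-sum evaluations $v_{r+1,r} = h_{r+1,r} = 0$ must be applied with care.
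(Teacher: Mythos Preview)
Your proposal is correct and follows essentially the same approach as the paper: specialize the explicit formulas of Proposition~\ref{prop_explicit_formulas} and Lemma~\ref{lem_col_probs} at $t=0$, compute $\alpha_s(q,0)$, $\beta_r(q,0)$, $\gamma'_{r,s}(q,0)$ and the prefactors $\tau_{r,s}$, $\tau^{\col}_{r,s}$ via the collapse of $[i,j]$, and then translate parameters to the $T^{(z)}_i$ notation. Your explicit identification of $\tau_{r,s}$ as the source of the main case split (nonzero only for $s=r$, or $s=r-1$ with $v_r=1$) is slightly more detailed than the paper's presentation, but the underlying computation is the same.
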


\begin{rem}
\label{rem_q_RSK_etc}
The $q$-deformation of row insertion described in Lemma \ref{lem_qWhit_probs}(1) was introduced by Borodin and Petrov \cite{BorodinPetrov16}. The $q$-deformation of column insertion described in Lemma \ref{lem_qWhit_probs}(2) was introduced by O'Connell and Pei in an equivalent but somewhat different form \cite{OConnellPei13}, and reformulated by Pei in essentially the form we have given \cite{Pei14}. As discussed in Remark \ref{rem_RSK_etc}, both the row and column insertion versions of RS have well-known generalizations to bijections between nonnegative integer matrices and pairs of semistandard Young tableaux, which are known as the RSK and Burge correspondences, respectively. Matveev and Petrov have extended the $q$-deformations of row and column insertion to $q$-deformations of the RSK and Burge correspondences \cite{MatveevPetrov17}.\footnote{The description of the $q$-deformation of the Burge correspondence in \cite[\S 6.4]{MatveevPetrov17} seems to require the entries of the input matrix to be randomly sampled. However, by conditioning on the outcome that $X_1 + \cdots + X_j$ is equal to a fixed value (this sum is a $q$-geometric random variable by \cite[Rem. 6.7]{MatveevPetrov17}), one obtains a probabilistic insertion algorithm that can be applied to a fixed input matrix. We thank Leonid Petrov for explaining this to us.}
\end{rem}

\begin{proof}[Proof of Lemma \ref{lem_qWhit_probs}]
According to \eqref{eq_row_probs} and \eqref{eq_col_probs}, we have
\[
p_{r,s} = \begin{cases}
\ds t^{v_{1,s}} \alpha_s & \text{ if } r = 0 \bigskip \\
\ds \tau_{r,s} \dfrac{\alpha_s \beta_r}{\gamma'_{r,s}} & \text{ if } r > 0
\end{cases}
\quad\quad\quad
p^\col_{r,s} = \begin{cases}
\ds q^{h_{s+1,d}} \alpha_s & \text{ if } r = 0 \bigskip \\
\ds \tau^{\col}_{r,s} \dfrac{\alpha_s \beta_r}{\gamma'_{r,s}} & \text{ if } r > 0
\end{cases}
\]
where
\[
\tau_{r,s} = \begin{cases}
t^{v_{r+1,s}} & \text{ if } 0 < r \leq s \\
q^{1 + 2h_{s+1,r-1}} \cdot t^{-1+v_{s+1,r}} & \text{ if } r > s
\end{cases}
\quad\quad
\tau^\col_{r,s} = \begin{cases}
q^{-1+h_{r,s}} \cdot t^{1 + 2v_{r+1,s}} & \text{ if } 0 < r \leq s \\
q^{h_{s+1,r-1}} & \text{ if } r > s.
\end{cases}
\]
As noted in Remark \ref{rem_row_probs}(2), the formulas in Proposition \ref{prop_explicit_formulas} express $\alpha_s, \beta_r,$ and $\gamma'_{r,s}$ as products of terms of the form $1-q^it^j$ with $i,j \geq 0$ and $i+j > 0$. When $t = 0$, such a term becomes 1 if $j = 0$, and $1-q^i$ otherwise. Examining those formulas (and using \eqref{eq_infinite_boundary}), we find
\[
\alpha_s(q,0) = 1-q^{h_s},
\quad\quad\quad
\beta_r(q,0) = \begin{cases}
\dfrac{1}{1-q^{1+h_{r-1}}} & \text{ if } v_r = 1 \medskip \\
\dfrac{1}{1-q} & \text{ if } v_r > 1,
\end{cases}
\]
\[
\gamma'_{r,s}(q,0) = \begin{cases}
\dfrac{1-q^{h_r}}{1-q} & \text{ if } s = r \medskip \\
1 & \text{ if } s = r-1 \text{ and } v_r = 1 \medskip \\
\dfrac{1}{1-q} & \text{ otherwise}.
\end{cases}
\]
The formulas for $p_{r,s}(q,0)$ and $p^\col_{r,s}(q,0)$ follow by considering several cases.

The descriptions of the corresponding insertion procedures follow from the description of $\qrst$ in \S \ref{sec_qrst}, plus a straightforward translation from the parameter notation $(r,s,h_i,v_i)$ to the notation $T^{(z)}_i$.
\end{proof}

By interchanging $q$ and $t$, conjugating the partitions, and applying Lemma \ref{lem_conjugate_probs}, we obtain similar formulas for $p_{r,s}(0,t)$ and $p^\col_{r,s}(0,t)$. These probabilities give rise to $t$-deformations of row (resp., column) insertion, which admit similar descriptions to the $q$-deformations of column (resp., row) insertion in Lemma \ref{lem_qWhit_probs}. Let $T'^{(z)}_i$ be the number of entries in column $i$ of $T$ which are at most $z$.

\begin{lem}[Hall--Littlewood specializations] \
\label{lem_HL_probs}
\begin{enumerate}
\item
The probabilities $p_{r,s}(0,t)$ give rise to a $t$-deformation of row insertion in which $T \leftarrow k$ is computed by the following rules:
\begin{itemize}
\item Insert $k$ into column $j$ with probability
\[
t^{T'^{(k)}_j}(1-t^{T'^{(k)}_{j-1}-T'^{(k)}_j}).
\]
\item
If $z$ is bumped from column $i$, insert $z$ into column $j \leq i$ with probability
\[
\begin{cases}
\dfrac{1-t^{T'^{(z)}_{i-1}-T'^{(z)}_i}}{1-t^{1+T'^{(z)}_{i-1}-T'^{(z)}_i}} & \text{ if } j = i \bigskip \\
\dfrac{(1-t)t^{T'^{(z)}_j - T'^{(z)}_i}(1-t^{T_{j-1}'^{(z)} - T_j'^{(z)}})}{1-t^{1+T'^{(z)}_{i-1}-T'^{(z)}_i}} & \text { if } j < i.
\end{cases}
\]
\end{itemize}

\item
The probabilities $p^\col_{r,s}(0,t)$ give rise to a $t$-deformation of column insertion in which $T \leftarrow k$ is computed by the following rules:
\begin{itemize}
\item Insert $k$ into the first column.
\item If $z$ is bumped from column $i$, insert $z$ into
\[
\begin{cases}
\text{ column } i \text{ with probability } & \dfrac{t(1-t^{T'^{(z)}_{i-1} - T'^{(z)}_i})}{1-t^{1+T'^{(z)}_{i-1} - T'^{(z)}_i}} \bigskip \\
\text{ column } i+1 \text{ with probability } & \dfrac{1-t}{1-t^{1+T'^{(z)}_{i-1} - T'^{(z)}_i}}.
\end{cases}
\]
\end{itemize}
\end{enumerate}
\end{lem}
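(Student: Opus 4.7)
The plan is to derive Lemma \ref{lem_HL_probs} from the already-established $q$-Whittaker formulas of Lemma \ref{lem_qWhit_probs} by invoking the conjugation symmetry of Lemma \ref{lem_conjugate_probs}. That symmetry, after setting one variable to zero and relabeling, yields
\[
p_{r,s}[\la](0,t) = p^{\col}_{d+1-r,\,d-s}[\la'](t,0) \qquad \text{and} \qquad p^{\col}_{r,s}[\la](0,t) = p_{d+1-r,\,d-s}[\la'](t,0),
\]
so each Hall--Littlewood probability can be read off from its $q$-Whittaker counterpart applied to the conjugate partition $\la'$, with the variable renamed $q \mapsto t$.

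Next, I would unwind the formulas, bearing in mind that $\la'$ has horizontal parameters $(v_d,\dots,v_1)$ and vertical parameters $(h_d,\dots,h_1)$. Combining this with the reindexing $r \mapsto d+1-r$, $s \mapsto d-s$, each case of Lemma \ref{lem_qWhit_probs}(2) transcribes directly into a case of Lemma \ref{lem_HL_probs}(1); likewise Lemma \ref{lem_qWhit_probs}(1) gives the closed forms underlying Lemma \ref{lem_HL_probs}(2). The substantive checks are that the case conditions ($v_r = 1$ vs.\ $v_r > 1$; $s < r-1$, $s = r-1$, $s = r$, etc.) correspond correctly after reindexing, and that the $t$-exponents inherited from the former $q$-exponents line up with those dictated by the formulas of Proposition \ref{prop_explicit_formulas} with $q=0$.

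For the insertion descriptions themselves, I would use the fact that the $\qrst$ insertion at $(q,t)$ into a tableau $T$ of shape $\la$ corresponds, under tableau-transposition, to the $\qrst$ insertion at $(t,q)$ into the transposed tableau $T^t$ of shape $\la'$, with row insertion swapping with column insertion; this is the tableau-level shadow of Lemma \ref{lem_conjugate_probs}, transported through growth diagrams as in \S\ref{sec: Fomin growth diagrams}. Under this correspondence, the row-counts $T^{(z)}_i$ of $T^t$ become the column-counts $T'^{(z)}_i$ of $T$ (as defined just before Lemma \ref{lem_HL_probs}). Substituting $T'^{(z)}_i$ for $T^{(z)}_i$ in the $q$-column-insertion rules of Lemma \ref{lem_qWhit_probs}(2) then produces the $t$-row-insertion rules of Lemma \ref{lem_HL_probs}(1), and symmetrically for part (2).

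The argument presents no significant obstacle; it is essentially bookkeeping, with the one delicate point being keeping straight which of the two $q$-Whittaker cases is invoked for each Hall--Littlewood part, and making sure the reversal $(h_1,\ldots,h_d)\leftrightarrow(v_d,\ldots,v_1)$ does not corrupt the role of the ``first'' column of $T$. An alternative plan would be to substitute $q=0$ directly into Proposition \ref{prop_explicit_formulas}: the factor $[i,j]_{0,t}$ collapses to $1-t^j$ when $i=0,\,j>0$ and to $1$ when $i>0$, which makes most terms in $\alpha_s,\beta_r,\gamma'_{r,s}$ trivial and yields the formulas by direct substitution. I would prefer the conjugation route, since it emphasizes the structural role of Lemma \ref{lem_conjugate_probs} and avoids duplicating the case analysis already carried out for Lemma \ref{lem_qWhit_probs}.
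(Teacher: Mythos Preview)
Your proposal is correct and follows essentially the same route as the paper: the paper derives Lemma \ref{lem_HL_probs} by ``interchanging $q$ and $t$, conjugating the partitions, and applying Lemma \ref{lem_conjugate_probs}'' to the $q$-Whittaker formulas of Lemma \ref{lem_qWhit_probs}, which is precisely your conjugation-and-transfer strategy. Your additional remark about the alternative of direct substitution into Proposition \ref{prop_explicit_formulas} is also accurate, though the paper, like you, prefers the conjugation route to avoid repeating the case analysis.
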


\begin{rem}
The $t$-deformation of column insertion described in Lemma \ref{lem_HL_probs}(2) was introduced by Bufetov and Petrov \cite{BufetovPetrov15}, and extended to a $t$-deformation of the Burge correspondence by Bufetov and Matveev \cite{BufetovMatveev18}.
\end{rem}

\subsection{The $q=t$ specialization}
\label{sec_q=t_spec}

The Schur polynomials are obtained from the Macdonald polynomials by setting $q = t$:
\[
P_\la(\mb{x};q,q) = Q_\la(\mb{x};q,q) = s_\la(\mb{x}).
\]
This can be seen immediately from the monomial expansions of Theorem \ref{thm_Mac_monomial}, since each $b_\la(c)$ is equal to 1 when $q = t$, and thus $\psi_{\la/\mu}(q,q) = \vp_{\la/\mu}(q,q) = 1$. We also see that the weights $\omega$ become 1 in this specialization, which means that the forward and backward probabilities $\mc{P}_\la$ and $\mc{\ov{P}}_\la$ become equal. Interestingly, we do not get a bijection by setting $q = t$ in $\mc{P}_\la(\mu \rightarrow \nu)$; instead, we get a one-parameter family of probabilistic bijections which interpolate between the row insertion bijection $F_\la^\row$ at $q = 0$ and the column insertion bijection $F_\la^\col$ at $q = \infty$. We considered the intermediate value $q = 1$ in \S \ref{sec_weighted_sets}-\ref{sec_probabilities}, and saw that in this specialization the probabilities are related to hook lengths and the enumeration of standard tableaux. We now consider the family $\mc{P}_\la(\mu \rightarrow \nu)|_{q=t}$ in general.

Let $[n]_q = 1-q^n$, and let
\[
H_\la(q) = \prod_{c \in \la} [h_\la(c)]_q.
\]
This is a $q$-analogue of the product of hook-lengths of $\la$.

\begin{lem}
\label{lem_q_q_probs}
For $\mu \in \mc{D}^*(\la)$ and $\nu \in \mc{U}(\la)$, we have
\begin{equation}
\label{eq_q_q_probs}
\mc{P}_\la(\mu \rightarrow \nu)|_{q=t} = \begin{cases}
q^{n(\nu/\la)} (1-q) \dfrac{H_\la(q)}{H_\nu(q)} & \text{ if } \mu = \la \medskip \\
q^{n(\nu/\la) - n(\la/\mu)-1} \dfrac{(H_\la(q))^2}{H_\mu(q) H_\nu(q)} \dfrac{(1-q)^2}{([h_\la(c_{\mu,\nu})]_q)^2} & \text{ if } \mu \in \mc{D}(\la)
\end{cases}
\end{equation}
(see \S \ref{sec_mu_neq_la} for the definition of the cell $c_{\mu,\nu}$).
\end{lem}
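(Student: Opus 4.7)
The plan is a direct evaluation of each ingredient in Definitions \ref{defn_r=0} and \ref{defn_r>0} at $q=t$, exploiting the fact that $[i,j]_{q,q} = 1-q^{i+j} = [i+j]_q$, and likewise $[i,j]^{\pm}_{q,q} = [i+j]_q$. Combined with the identity $a_\la(c) + \ell_\la(c) + 1 = h_\la(c)$, this reduces every factor appearing in $\alpha_{\nu/\la}$, $\ov{\alpha}_{\nu/\la}$, $\beta_{\la/\mu}$, $\ov{\beta}_{\la/\mu}$, and $\gamma_{\nu/\la/\mu}$ to a $q$-hook-length factor $1 - q^{h_\kappa(c)}$ for some partition $\kappa \in \{\la,\mu,\nu\}$.

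First I would compute $\alpha_{\nu/\la}(q,q)$. Setting $q=t$ in the definition gives
\[
\alpha_{\nu/\la}(q,q) = \prod_{c \in \mc{R}_{\nu/\la} \cup \mc{C}_{\nu/\la}} \frac{[h_\la(c)]_q}{[h_\nu(c)]_q},
\]
where the union is disjoint because $\nu/\la$ is a single cell. Since $h_\la(c) = h_\nu(c)$ for every $c \in \la$ outside $\mc{R}_{\nu/\la} \cup \mc{C}_{\nu/\la}$, the product extends trivially to all $c \in \la$. The numerator then equals $H_\la(q)$, and the denominator equals $H_\nu(q)/[h_\nu(\nu/\la)]_q = H_\nu(q)/(1-q)$ since the single cell of $\nu/\la$ has hook length $1$ in $\nu$. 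This yields $\alpha_{\nu/\la}(q,q) = (1-q)H_\la(q)/H_\nu(q)$, and an identical argument gives $\ov{\alpha}_{\nu/\la}(q,q) = (1-q)H_\la(q)/H_\nu(q)$. Substituting into Definition \ref{defn_r=0} settles the case $\mu = \la$.

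For $\mu \in \mc{D}(\la)$, the same analysis applied to $\alpha_{\la/\mu}$ gives $\beta_{\la/\mu}(q,q) = 1/\alpha_{\la/\mu}(q,q) = H_\la(q)/((1-q)H_\mu(q))$, and similarly for $\ov{\beta}_{\la/\mu}$. For $\gamma_{\nu/\la/\mu}$, set $a = n'(\la/\mu) - n'(\nu/\la)$ and $b = n(\nu/\la) - n(\la/\mu)$. At $q=t$ both numerator factors $1-q^a t^b$ and $1-q^{a+1}t^{b-1}$ collapse to $1-q^{a+b}$, so
\[
\gamma_{\nu/\la/\mu}(q,q) = \frac{(1-q^{a+b})^2}{(1-q)^2}.
\]
The proof of Lemma \ref{lem_gamma_q=t=1} shows $|a+b| = h_\la(c_{\mu,\nu})$, which identifies $(1-q^{a+b})^2$ with $([h_\la(c_{\mu,\nu})]_q)^2$ up to a monomial factor that depends on the sign of $a+b$. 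Assembling all of these ingredients in Definition \ref{defn_r>0} gives a formula of the claimed shape.

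The main obstacle will be the careful tracking of the power of $q$ when $a+b < 0$. Writing $e = a+b$, one uses the identity $(1-q^e)^2 = q^{2e}(1-q^{-e})^2$ to rewrite $(1-q^{a+b})^2$ in terms of $([h_\la(c_{\mu,\nu})]_q)^2$, introducing a factor $q^{-2\min(e,0)}$ that must combine with $t^{n(\nu/\la)-n(\la/\mu)-1}|_{q=t} = q^{b-1}$ to yield the $q$-exponent appearing in the statement. Apart from this sign bookkeeping, the proof is a direct specialization, and the $q=t$ limit amounts to a transparent consistency check between the explicit product formulas of Proposition \ref{prop_explicit_formulas} and the hook-length factorization $H_\la(q)$.
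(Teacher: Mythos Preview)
Your approach is the same as the paper's: specialize $\alpha_{\nu/\la}$, $\beta_{\la/\mu}$, and $\gamma_{\nu/\la/\mu}$ at $q=t$ using $[i,j]_{q,q}=[i+j]_q$, recognize the telescoping products as ratios $H_\la(q)/H_\nu(q)$ and $H_\la(q)/H_\mu(q)$, and appeal to the computation in Lemma~\ref{lem_gamma_q=t=1} for the $\gamma$ term. The $\mu=\la$ case and the computations of $\alpha_{\nu/\la}(q,q)$, $\beta_{\la/\mu}(q,q)$ are all correct.

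There is, however, a genuine problem at exactly the point you flag as ``the main obstacle.'' You assert that the factor $q^{-2\min(e,0)}$ (with $e=a+b$) combines with $q^{b-1}$ to produce the exponent $n(\nu/\la)-n(\la/\mu)-1$ in the statement. It does not. When $e<0$ the combined exponent is $b-1-2e$, which differs from $b-1$ by $2h_\la(c_{\mu,\nu})$. Concretely, take $\la=(h^v)$, $\mu=\la^{(-1)}$, $\nu=\la^{(+0)}$: Example~\ref{ex_probs_d=1} gives
\[
p_{1,0}(q,q)=q^{v}\,\frac{1-q^{h}}{1-q^{h+v}},
\]
whereas the right-hand side of \eqref{eq_q_q_probs} evaluates to $q^{-v}(1-q^{h})/(1-q^{h+v})$, since here $n(\nu/\la)-n(\la/\mu)-1=-v$ and $h_\la(c_{\mu,\nu})=v$. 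One can also check that the two purported values of $p_{1,0}(q,q)$ and $p_{1,1}(q,q)$ coming from \eqref{eq_q_q_probs} do not sum to $1$.

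In other words, your computation is sound, but the exponent in the lemma as stated is only correct when $a+b>0$ (equivalently $r\le s$ in the notation of Proposition~\ref{prop_explicit_formulas}); for $r>s$ the correct exponent is $n(\nu/\la)-n(\la/\mu)-1+2h_\la(c_{\mu,\nu})$. The paper's own proof glosses over this by pointing to Lemma~\ref{lem_gamma_q=t=1}, which only identifies $h_\la(c_{\mu,\nu})$ with $|a+b|$, not with $a+b$. So rather than trusting that the bookkeeping ``must'' work out, you should carry it through; doing so shows the stated formula needs a correction in the $r>s$ case.
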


\begin{proof}
Arguing as in the proof of Lemma \ref{lem_q=t=1}, we obtain
\[
\alpha_{\nu/\la}(q,q) = (1-q) \dfrac{H_\la(q)}{H_\nu(q)}, \quad\quad \beta_{\la/\mu}(q,q) = \dfrac{1}{\alpha_{\la/\mu}(q,q)} = \dfrac{1}{1-q} \dfrac{H_\la(q)}{H_\mu(q)},
\]
where the $(1-q)$ is needed to cancel out the term in $H_\nu(q)$ (resp., $H_\la(q)$) coming from the cell $c = \nu/\la$ (resp., $c = \la/\mu$). The formula \eqref{eq_q_q_probs} in the case $\mu = \la$ follows immediately. For $\mu \in \mc{D}(\la)$, \eqref{eq_q_q_probs} follows from the proof of Lemma \ref{lem_gamma_q=t=1}.
\end{proof}

Combining Lemma \ref{lem_q_q_probs} with Lemma \ref{lem_id_probs}, we obtain a simple formula for the $q=t$ specialization of the $\qrst$ probabilities $\mc{P}(\id \rightarrow P,Q)$:
\begin{equation}
\label{eq_id_probs_q=t}
\mc{P}(\id \rightarrow P,Q)|_{q=t} = \begin{cases}
q^{n(\la)} \dfrac{(1-q)^{|\la|}}{H_\la(q)} & \text{ if } P = Q \text{ has shape } \la \medskip \\
0 & \text{ otherwise}.
\end{cases}
\end{equation}
This implies that in the $q = t$ specialization, the probability that the identity permutation inserts to a pair of SYTs of shape $\la$ is equal to
\[
q^{n(\la)} \dfrac{(1-q)^{|\la|} f_\la}{H_\la(q)},
\]
a $q$-analogue of the Plancherel measure $(f_\la)^2/|\la|!$.

\begin{rem}
\label{rem_ps_Schur}
The probabilities \eqref{eq_q_q_probs} may be reformulated in terms of principal specializations of Schur functions. The \emph{principal specialization} of the Schur function $s_\la$ is defined by $\ps(s_\la) = s_\la(1,q,q^2, \ldots)$. It is well-known (see, e.g., \cite[Cor. 7.21.3]{EC2}) that
\begin{equation}
\label{eq_ps_Schur}
\ps(s_\la) = \dfrac{q^{n(\la)}}{H_\la(q)}.
\end{equation}
Using \eqref{eq_ps_Schur}, the $q=t$ specialization of the identity $\sum_{\nu \in \mc{U}(\la)} \mc{P}(\la \rightarrow \nu) = 1$ can be rewritten as
\[
\sum_{\nu \in \mc{U}(\la)} \ps(s_\nu) = \dfrac{1}{1-q} \ps(s_\la),
\]
which is the principal specialization of the Pieri rule $h_1 s_\la = \sum_{\nu \in \mc{U}(\la)} s_\nu$ (c.f. Remark \ref{rem_q=t=1}). Similarly, the $q=t$ specializations of the identities $\sum_{\nu \in \mc{U}(\la)} \mc{P}(\mu \rightarrow \nu) = 1$ for $\mu \in \mc{D}(\la)$ and $\sum_{\mu \in \mc{D}^*(\la)} \mc{\bP}(\mu \leftarrow \nu) = 1$ for $\nu \in \mc{U}(\la)$ can be rewritten as
\[
\sum_{\nu \in \mc{U}(\la)} \dfrac{\ps(s_\mu) \ps(s_\nu)}{(1-q^{h_\la(c_{\mu,\nu})})^2} = \dfrac{q}{(1-q)^2} \ps(s_\la)^2
\]
and
\[
\dfrac{q}{1-q} \ps(s_\la) \ps(s_\nu) + \sum_{\mu \in \mc{D}(\la)} \dfrac{\ps(s_\mu) \ps(s_\nu)}{(1-q^{h_\la(c_{\mu,\nu})})^2} = \dfrac{q}{(1-q)^2} \ps(s_\la)^2.
\]
These can be viewed as $q$-analogues of the identities in Corollary \ref{cor_r>0_q=t=1}. We would like to better understand them.
\end{rem}

\section{Hook walks}
\label{sec_hook_walk}

In \S \ref{sec_proofs}, we proved that for each $\mu \in \mc{D}^*(\la)$, the rational expressions $\mc{P}_\la(\mu \rightarrow \nu)$ sum to 1, and therefore define a probability distribution on $\mc{U}(\la)$. In this section, we show that in the case $\mu = \la$, the expressions $\mc{P}_\la(\la \rightarrow \nu)$ arise from a simple random process on Young diagrams, thereby explaining why they are probabilities in a more conceptual way.

\subsection{The Greene--Nijenhuis--Wilf hook walk}

We saw in \S \ref{sec_mu=la} that $\mc{P}_\la(\la \rightarrow \nu)|_{q = t \rightarrow 1} = H_\la/H_\nu$, so the identity
\begin{equation}
\label{eq_upper_recursion_hooks}
\sum_{\nu \in \mc{U}(\la)} \dfrac{H_\la}{H_\nu} = 1
\end{equation}
is a special case of Theorem \ref{thm_r=0}. Greene, Nijenhuis, and Wilf gave a beautiful probabilistic proof of the identity \eqref{eq_upper_recursion_hooks} by means of a random ``hook walk'' \cite{GNW1,GNW2}, whose definition we now recall.

Let $\la$ be a partition. As in previous sections, we identify $\la$ with its Young diagram, which is the set of cells
\[
\la = \{(x,y) \,|\, 1 \leq y \leq \la'_1, 1 \leq x \leq \la_y\}
\]
in the first quadrant. Let $\ov{\la} = (\bbZ_{> 0} \times \bbZ_{> 0}) - \la$ be the complement of $\la$ in the first quadrant. Given a cell $c = (x,y) \in \ov{\la}$, define the \emph{arm} and the \emph{leg} of $c$ (with respect to $\la$) by
\[
\arm_\la(c) = \{(i,y) \,|\, \la_y < i < x\}, \quad\quad \leg_\la(c) = \{(x,j) \,|\, \la'_x < j < y\},
\]
where we consider $\la_j = 0$ for $j > \la'_1$ and $\la'_i = 0$ for $i > \la_1$ (see Figure \ref{fig_exterior_hook}). Set
\[
a_\la(c) = |\arm_\la(c)|, \quad\quad \ell_\la(c) = |\leg_\la(c)|, \quad\quad h_\la(c) = a_\la(c) + \ell_\la(c) + 1.
\]
We call $h_\la(c)$ the \emph{exterior hook-length} of the cell $c$. (When $\la$ is understood, we may omit the subscript $\la$ and write $\arm(c), a(c)$, etc.) It is clear that a cell $c \in \ov{\la}$ has exterior hook-length equal to 1 if and only if $c$ is an outer corner of $\la$. In this case, we will write $c = \nu/\la$, where $\nu$ is the partition $\la \cup \{c\}$.

\begin{figure}
\begin{center}
\begin{tikzpicture}[scale=0.8]
	\Yboxdim{16 pt}
	\Ylinecolour{lightgray}
	\Ycyan
	\tgyoung(0cm,0cm,,,,:::::;;;;;;;)
	\Yred
	\tgyoung(0cm,0cm,::::::::::::;,::::::::::::;,::::::::::::;)
	\Ywhite
	\tyng(0,0,10,9,7,5,5,3,2)
	\draw[line width=2pt] (0,0) -- (10*16pt,0) -- (10*16pt,16pt) -- (9*16pt,16pt) -- (9*16pt,2*16pt) -- (7*16pt,2*16pt) -- (7*16pt,3*16pt) -- (5*16pt,3*16pt) -- (5*16pt,5*16pt) -- (3*16pt,5*16pt) -- (3*16pt,6*16pt) -- (2*16pt,6*16pt) -- (2*16pt,7*16pt) -- (0*16pt,7*16pt) -- (0,0) --(7*16pt,0);
	\Ylinecolour{black}
	\tgyoung(0cm,0cm,,,,::::::::::::;)
	\node at (12.5*16pt,3.5*16pt) {$c$};
	\node at (9.5*16pt,4.5*16pt) {$\arm_\lambda(c)$};
	\node at (14.5*16pt,1.75*16pt) {$\leg_\lambda(c)$};

\end{tikzpicture}
\end{center}
\caption{The arm and leg of a cell $c$ in the complement of the partition $\la=(10,9,7,5,5,3,2)$.}
\label{fig_exterior_hook}
\end{figure}
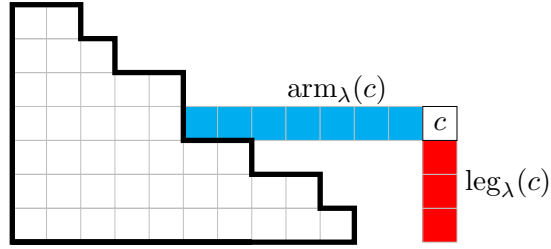

\begin{defn}[Greene--Nijenhuis--Wilf \cite{GNW2}]
\label{defn_GNW}
Fix a partition $\la$. For $c \in \ov{\la}$, the \emph{exterior hook walk} starting at $c$ is the following random process:

\emph{If $c$ is an outer corner of $\la$, the process terminates. If $c$ is not an outer corner of $\la$, choose a cell $c'$ in $\arm_\la(c) \, \cup \, \leg_\la(c)$ uniformly at random, and move to that cell. Repeat the process for $c'$.}

For $\nu \in \mc{U}(\la)$, let $\condP{\nu}{c}$ be the probability that the exterior hook walk starting at $c$ ends at the outer corner $\nu/\la$.
\end{defn}

\begin{thm}[Greene--Nijenhuis--Wilf \cite{GNW2}]
\label{thm_GNW}
Let $\la$ be a partition, and fix $c = (x,y) \in \ov{\la}$. If $x > \la_1$ and $y > \la'_1$, then
\[
\condP{\nu}{c} = \dfrac{H_\la}{H_\nu}.
\]
for $\nu \in \mc{U}(\la)$. In particular, $\condP{\nu}{c}$ is independent of $c$, as long as neither the row nor column of $c$ intersects the diagram of $\la$.
\end{thm}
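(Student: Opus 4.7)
The plan is to mimic the probabilistic argument of Greene, Nijenhuis, and Wilf \cite{GNW2}, which was designed with exactly this exterior-hook-walk statement in mind. The striking feature of the theorem is not just the value $H_\la/H_\nu$ but the \emph{independence from the starting cell} $c$: as long as $c$ lies strictly to the northeast of the bounding rectangle of $\la$, the walk forgets $c$. My plan is to prove an explicit product formula for $\condP{\nu}{c}$ valid for \emph{every} $c \in \ov{\la}$, and then check that this formula collapses to $H_\la/H_\nu$ in the NE regime.

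First, conditioning on the initial move of the walk yields the one-step recurrence
\[
\condP{\nu}{c} = \frac{1}{a_\la(c)+\ell_\la(c)}\left(\sum_{c' \in \arm_\la(c)} \condP{\nu}{c'} + \sum_{c' \in \leg_\la(c)} \condP{\nu}{c'}\right)
\]
for every non-corner $c \in \ov{\la}$, with the boundary condition $\condP{\nu}{c} = \delta_{c,\,\nu/\la}$ at outer corners. Since $a_\la(c') + \ell_\la(c') < a_\la(c) + \ell_\la(c)$ for every $c'$ appearing on the right, this determines all probabilities uniquely by induction on $a_\la(c) + \ell_\la(c)$.

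Second, writing $\nu/\la = (p,q)$, I would guess a GNW-type product formula of the shape
\[
\condP{\nu}{(x_0,y_0)} \;=\; \prod_i \frac{h_\la(i,q)}{h_\la(i,q)+1}\cdot \prod_j\frac{h_\la(p,j)}{h_\la(p,j)+1}\cdot(\text{correction}),
\]
where $i$ ranges over an appropriate subset of cells of $\la$ in row $q$ depending on $x_0$, $j$ ranges analogously over cells in column $p$ depending on $y_0$, and the correction factor becomes trivial once $c$ is strictly to the northeast. The verification is by induction on $a_\la(c) + \ell_\la(c)$: substituting the guess into the one-step recurrence, the arm-sum and leg-sum each telescope, using identities of the form $\tfrac{h}{h+1}-\tfrac{h'}{h'+1}=\tfrac{h-h'}{(h+1)(h'+1)}$ after clearing a common factor. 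This telescoping verification is the main technical step.

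Finally, I would specialize to the NE regime $x_0 > \la_1$, $y_0 > \la'_1$: here both index sets above contain every cell of the hook of $(p,q)$ in $\nu$, the correction factor is trivial, and the full product telescopes to exactly $H_\la/H_\nu$, manifestly independent of $(x_0, y_0)$. As a consistency check, summing over $\nu \in \mc{U}(\la)$ recovers the classical upper recursion \eqref{eq_upper_recursion}. The principal obstacle is pinning down the precise form of the general formula and carrying out the telescoping verification cleanly — in particular keeping track of the boundary cases where $c$ already sits in a row or column meeting $\la$, so that the correction factor becomes nontrivial. This careful bookkeeping at the boundary is the delicate part of the original GNW argument.
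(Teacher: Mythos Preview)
Your plan is in the right spirit --- guess an explicit formula for $\condP{\nu}{c}$ valid for general $c$, verify it by the one-step recurrence, then specialize --- but it is missing the key structural lemma that makes the GNW argument go through cleanly, and without it your ``(correction)'' placeholder is exactly the part you cannot fill in.

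The paper does not prove Theorem~\ref{thm_GNW} directly (it is cited from \cite{GNW2}), but it proves the $(q,t)$-generalization Theorem~\ref{thm_qt_hook} by what it calls ``the strategy of the proof of Theorem~\ref{thm_GNW}.'' That strategy has two steps. First (Lemma~\ref{lem_qt_hook_1}), one proves the \emph{factorization}
\[
\condP{\nu}{c} \;=\; \condP{\nu}{c_1}\cdot \condP{\nu}{c_2},
\]
where $c_1$ is the projection of $c$ onto the row of $\nu/\la$ and $c_2$ is the projection onto the column. This is established by induction on $a_\la(c)+\ell_\la(c)$, and it is the conceptual heart of the argument: it says the two-dimensional walk decouples into two one-dimensional problems. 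Second (Lemma~\ref{lem_qt_hook_2}), one computes $\condP{\nu}{c_1}$ and $\condP{\nu}{c_2}$ explicitly by a one-dimensional telescoping recurrence along the arm and leg. Multiplying the two pieces gives the answer, and the independence from $c$ in the NE regime falls out because the one-dimensional formulas stabilize once $c_1$ is past column $\la_1$ and $c_2$ is past row $\la'_1$.

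Your proposal tries to skip the factorization and guess the full two-dimensional formula directly. That can be made to work, but the formula you would eventually guess \emph{is} the product $\condP{\nu}{c_1}\condP{\nu}{c_2}$ --- there is no separate correction term --- and verifying it by your recurrence amounts to redoing the inductive proof of Lemma~\ref{lem_qt_hook_1} and the telescoping of Lemma~\ref{lem_qt_hook_2} simultaneously. Isolating the factorization first is what lets GNW (and the paper) avoid the boundary bookkeeping you flag as the delicate part.
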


\begin{ex}
If $\la = (h^v)$ is a rectangle, then the exterior hook walk starting at $c = (h+1,v+1)$ ends at $\nu_0/\la = (h+1,1)$ with probability $v/(v+h)$, and at $\nu_1/\la = (1,v+1)$ with probability $h/(v+h)$. The reader may easily verify that Theorem \ref{thm_GNW} holds in this case.
\end{ex}

\begin{rem}
The original hook walk introduced by Greene, Nijenhuis, and Wilf takes place inside the Young diagram of $\la$, and gives a probabilistic proof of the hook-length formula \cite{GNW1}.
\end{rem}

\subsection{A $(q,t)$-hook walk}
\label{sec_hook_walk_qt}

In this section, we present a $(q,t)$-generalization of the exterior hook walk which gives rise to the probabilities $\mc{P}_\la(\la \rightarrow \nu)$. Our definition was inspired by the $(q,t)$-hook walk of Garsia and Haiman \cite{GarHai}, which generalizes the ``interior'' hook walk of \cite{GNW1}. (We note also that the $q = t$ specialization of our hook walk is similar to Kerov's $q$-hook walk \cite{Kerov}.)

\begin{defn}
Let $\la$ be a partition, and fix $c = (x,y) \in \ov{\la}$. For $c' \in \arm_\la(c) \, \cup \, \leg_\la(c)$, define
\[
P(c \rightarrow c') = \begin{cases}
q^{a(c)-i} \dfrac{t^{\ell(c)} (1-q)}{1-q^{a(c)}t^{\ell(c)}} & \text{ if } c' = (x-i,y) \in \arm_\la(c) \bigskip \\
t^{j-1} \dfrac{1-t}{1-q^{a(c)}t^{\ell(c)}} & \text{ if } c' = (x,y-j) \in \leg_\la(c).
\end{cases}
\]
The \emph{exterior $(q,t)$-hook walk} starting at $c$ is the random process which terminates if $c$ is an outer corner of $\la$, and otherwise moves from $c$ to $c' \in \arm_\la(c) \, \cup \, \leg_\la(c)$ with probability $P(c \rightarrow c')$, and then repeats.

For $\nu \in \mc{U}(\la)$, write $\qtP{\nu}{c}$ for the probability that the exterior $(q,t)$-hook walk starting at $c$ terminates at the outer corner $\nu / \la$.
\end{defn}

It is easy to verify that
\[
\sum_{c' \in \arm_\la(c) \, \cup \, \leg_\la(c)} P(c \rightarrow c') = 1,
\]
so the exterior $(q,t)$-hook walk does in fact define a probability distribution on $\mc{U}(\la)$. It is also clear that if $q,t \in [0,1)$ or $q,t \in (1,\infty)$, then $\qtP{\nu}{c} \in [0,1]$. In the limit $q = t \rightarrow 1$, $P(c \rightarrow c')$ becomes the uniform distribution on $\arm_\la(c) \, \cup \, \leg_\la(c)$, so the $(q,t)$-hook walk reduces to the Greene--Nijenhuis--Wilf hook walk of Definition \ref{defn_GNW}.

\begin{thm}
\label{thm_qt_hook}
Let $\la$ be a partition, and fix $c = (x,y) \in \ov{\la}$. If $x > \la_1$ and $y > \la'_1$, then
\[
\qtP{\nu}{c} = \mc{P}_\la(\la \rightarrow \nu)
\]
for $\nu \in \mc{U}(\la)$. In particular, $\qtP{\nu}{c}$ is independent of $c$, as long as neither the row nor column of $c$ intersects the diagram of $\la$.
\end{thm}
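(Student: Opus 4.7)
The plan is to adapt the Greene--Nijenhuis--Wilf (GNW) strategy for the classical hook walk to the $(q,t)$-setting. The argument proceeds in two stages: first, establish that $\qtP{\nu}{c}$ is independent of the starting cell $c$ throughout the safe region; second, compute the common value and match it with $\mc{P}_\la(\la \to \nu) = t^{n(\nu/\la)} \alpha_{\nu/\la}(q,t)$.

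For the independence stage, the key observation is that when $c=(x,y)$ lies in the safe region, $a_\la(c)=x-1$ and $\ell_\la(c)=y-1$, so the transition probabilities simplify to $P(c \to (x-i,y)) = q^{x-1-i}t^{y-1}(1-q)/(1-q^{x-1}t^{y-1})$ and $P(c \to (x,y-j)) = t^{j-1}(1-t)/(1-q^{x-1}t^{y-1})$. I would compare the walks from $c = (x,y)$ and $c' = (x+1,y)$ by conditioning on the first step of the walk from $c'$. The walk from $c'$ either jumps immediately to $c$ (with probability $q^{x-1}t^{y-1}(1-q)/(1-q^xt^{y-1})$) or to some other cell in the arm or leg of $c'$. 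Induction on $a_\la(c) + \ell_\la(c)$, together with the factorization of transition probabilities into separate $q$- and $t$-contributions, should produce a telescoping identity forcing $\qtP{\nu}{c'} = \qtP{\nu}{c}$. A symmetric argument handles vertically adjacent cells, and iterating yields full independence across the safe region.

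For the computation stage, thanks to independence it suffices to evaluate $\qtP{\nu}{c_0}$ for a single convenient choice of $c_0$. I would take $c_0 = (\la_1+1,\la_1'+1)$, the northeast corner of the safe region immediately adjacent to $\la$. The first step of the walk from $c_0$ lands either in the strip $\{(x,\la_1'+1):1\le x\le\la_1\}$ (horizontal move) or in $\{(\la_1+1,y):1\le y\le\la_1'\}$ (vertical move), entering the region where the walk interacts nontrivially with $\la$. I would then proceed by induction on $|\la|$: conditioning on the first step and using the structure of subsequent walks to reduce to a walk against a smaller partition obtained by peeling off a boundary cell of $\la$.

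The main obstacle is in matching the path-sum arising in the second stage with the compact product $t^{n(\nu/\la)} \alpha_{\nu/\la}(q,t)$. This requires showing that the sum over all $(q,t)$-hook walks ending at the outer corner $\nu/\la$ collapses to a telescoping product of factors $[a_\la(c),\ell_\la(c)+1]/[a_\nu(c),\ell_\nu(c)+1]$ and $[a_\la(c)+1,\ell_\la(c)]/[a_\nu(c)+1,\ell_\nu(c)]$ indexed by $\mc{R}_{\nu/\la}$ and $\mc{C}_{\nu/\la}$. This combinatorial identity is a $(q,t)$-deformation of the classical GNW identity $\sum \condP{\nu}{c}=H_\la/H_\nu$, and I expect its proof will require a $(q,t)$-Lagrange interpolation argument in the spirit of \S \ref{sec_proofs}, or a clever partial-fraction manipulation in which the normalization denominators $1-q^{a_\la(c)}t^{\ell_\la(c)}$ cancel consecutively along each step of the walk. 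An alternative would be to exhibit both $\qtP{\nu}{c}$ and $\mc{P}_\la(\la\to\nu)$ as unique solutions to the same linear recursion with matching boundary conditions, bypassing explicit enumeration.
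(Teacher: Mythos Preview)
Your instinct to follow the GNW strategy is correct, but the proposal is missing the central structural lemma that makes the argument go through, and as a result both stages have genuine gaps.

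In Stage~1, the induction you describe cannot close. After the first step from a safe cell $c=(x,y)$, the walk may land at $(x,j)$ with $j\le\la_1'$ or at $(i,y)$ with $i\le\la_1$; such cells are \emph{not} in the safe region, so your inductive hypothesis about independence does not apply to them. Comparing the first-step expansions of $\qtP{\nu}{(x+1,y)}$ and $\qtP{\nu}{(x,y)}$ therefore leaves residual sums over non-safe cells, and there is no reason for $\qtP{\nu}{(x+1,j)}$ and $\qtP{\nu}{(x,j)}$ to coincide when row $j$ meets $\la$---indeed they do not. The telescoping you anticipate does not materialize.

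The paper avoids this by proving a stronger statement valid for \emph{every} $c\in\ov\la$, not only the safe region: if $\nu/\la=(x_0,y_0)$ and $c=(x_0+u,y_0+v)$, then
\[
\qtP{\nu}{c}=\qtP{\nu}{(x_0+u,\,y_0)}\cdot\qtP{\nu}{(x_0,\,y_0+v)}.
\]
This factorization into a ``row'' piece and a ``column'' piece is proved by induction on $u+v$; after one step the walk lands at a cell with strictly smaller $u+v$, and the hypothesis applies regardless of whether that cell lies in the safe region. Each one-dimensional factor is then computed directly by a telescoping recursion along its row or column, and the products over $\mc R_{\nu/\la}$ and $\mc C_{\nu/\la}$ appearing in $\alpha_{\nu/\la}$ drop out immediately---no Lagrange interpolation or partial-fraction trick is needed. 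Independence of $c$ in the safe region is a corollary: when $x>\la_1$ the row factor telescopes to something independent of $u$, and symmetrically for the column factor.

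Your Stage~2 is also not a workable plan as stated: ``peeling off a boundary cell'' to reduce to a smaller partition does not match the dynamics of the exterior walk, and you yourself flag that the final matching with $\alpha_{\nu/\la}$ would require an unspecified argument. The factorization lemma is precisely the missing idea that replaces both stages at once.
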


This result proves that $\mc{P}_\la(\la \rightarrow \nu)$ are probabilities, thereby giving an alternative proof of Theorem \ref{thm_r=0}.

Our proof of Theorem \ref{thm_qt_hook} uses the strategy of the proof of Theorem \ref{thm_GNW} in \cite{GNW2}. It is based on two straightforward lemmas. We encourage the reader to refer to Figure \ref{fig_qt_hook_thm} while reading the statements of these lemmas and their proofs.

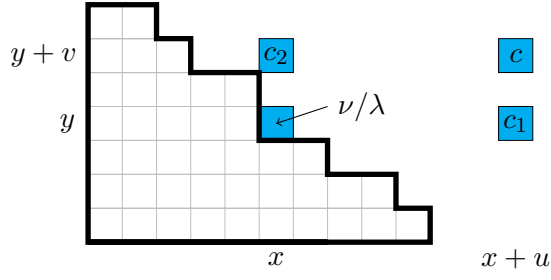
\begin{figure}
\begin{center}
\begin{tikzpicture}[scale=0.8]
	\Yboxdim{16 pt}
	\Ylinecolour{lightgray}
	\tyng(0,0,10,9,7,5,5,3,2)
	\Ylinecolour{black}
	\Ycyan
	\tgyoung(0cm,0cm,,,,::::::::::::;)
	\tgyoung(0cm,0cm,,,,,,::::::::::::;)
	\tgyoung(0cm,0cm,,,,,,:::::;)
	\tgyoung(0cm,0cm,,,,:::::;)
	\draw[line width=2pt] (0,0) -- (10*16pt,0) -- (10*16pt,16pt) -- (9*16pt,16pt) -- (9*16pt,2*16pt) -- (7*16pt,2*16pt) -- (7*16pt,3*16pt) -- (5*16pt,3*16pt) -- (5*16pt,5*16pt) -- (3*16pt,5*16pt) -- (3*16pt,6*16pt) -- (2*16pt,6*16pt) -- (2*16pt,7*16pt) -- (0*16pt,7*16pt) -- (0,0) --(7*16pt,0);
	\node at (12.5*16pt,3.5*16pt) {$c_1$};
	\node at (12.5*16pt,5.5*16pt) {$c$};
	\node at (5.5*16pt,5.5*16pt) {$c_2$};
	\draw[->] (7*16pt,4*16pt) -- (5.5*16pt,3.5*16pt);
	\node at (8*16pt,4*16pt) {$\nu / \lambda$};
	\node at (-.6*16pt,3.5*16pt) {$y$};
	\node at (-1.3*16pt,5.5*16pt) {$y+v$};
	\node at (5.5*16pt,-.5*16pt) {$x$};
	\node at (12.5*16pt,-.5*16pt) {$x+u$};	
\end{tikzpicture}
\end{center}
\caption{The cells involved in Lemma \ref{lem_qt_hook_1}.}
\label{fig_qt_hook_thm}
\end{figure}

\begin{lem}
\label{lem_qt_hook_1}
Suppose $\nu \in \mc{U}(\la)$, and let $(x,y) = \nu/\la$. If $c = (x+u,y+v)$, then
\[
\qtP{\nu}{c} = \qtP{\nu}{c_1} \qtP{\nu}{c_2},
\]
where $c_1 = (x+u,y)$ and $c_2 = (x,y+v)$.
\end{lem}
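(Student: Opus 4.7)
The plan is to induct on $u + v$. The base cases $u = 0$ and $v = 0$ are trivial: if $v = 0$, then $c = c_1$ and $c_2 = \nu/\la$, so $\qtP{\nu}{c_2} = 1$ (the walk terminates immediately at the outer corner) and the claim reduces to $\qtP{\nu}{c} = \qtP{\nu}{c_1}$; the case $u = 0$ is symmetric.

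For the inductive step with $u, v \geq 1$, I would first expand $\qtP{\nu}{c}$ via the one-step recursion
$$\qtP{\nu}{c} = \sum_{i=1}^{a_\la(c)} P(c \to (x+u-i, y+v)) \, \qtP{\nu}{(x+u-i, y+v)} + \sum_{j=1}^{\ell_\la(c)} P(c \to (x+u, y+v-j)) \, \qtP{\nu}{(x+u, y+v-j)}.$$
Because the walk's column and row coordinates are weakly decreasing, $\qtP{\nu}{(a,b)} = 0$ whenever $a < x$ or $b < y$, so only terms with $1 \leq i \leq u$ and $1 \leq j \leq v$ survive. For each surviving summand the inductive hypothesis applies (the new offsets $(u-i, v)$ and $(u, v-j)$ both have total strictly less than $u+v$), giving $\qtP{\nu}{(x+u-i, y+v)} = \qtP{\nu}{(x+u-i, y)} \cdot \qtP{\nu}{c_2}$ and $\qtP{\nu}{(x+u, y+v-j)} = \qtP{\nu}{c_1} \cdot \qtP{\nu}{(x, y+v-j)}$.

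The decisive observation is that the ratios $P(c \to (x+u-i, y+v)) / P(c_1 \to (x+u-i, y))$ and $P(c \to (x+u, y+v-j)) / P(c_2 \to (x, y+v-j))$ are independent of $i$ and $j$ respectively. This is visible from the explicit formula for $P(\cdot \to \cdot)$: the entire $i$-dependence of a horizontal move sits in the factor $q^{a_\la(\cdot) - i}$, and the entire $j$-dependence of a vertical move sits in $t^{j-1}$, so they cancel in the ratio up to a fixed power of $q$ and $t$ determined by $a_\la(c) - a_\la(c_1)$ and $\ell_\la(c) - \ell_\la(c_1)$. Combining with the analogous one-step decompositions $\qtP{\nu}{c_1} = \sum_{i=1}^u P(c_1 \to (x+u-i, y)) \, \qtP{\nu}{(x+u-i, y)}$ and $\qtP{\nu}{c_2} = \sum_{j=1}^v P(c_2 \to (x, y+v-j)) \, \qtP{\nu}{(x, y+v-j)}$ (using that moves off the row $y$, resp.\ column $x$, cannot reach $\nu/\la$), this factors the recursion into $\qtP{\nu}{c} = \qtP{\nu}{c_1} \qtP{\nu}{c_2} \cdot (R_1 + R_2)$ for explicit scalars $R_1, R_2$ built from arm/leg statistics of $c, c_1, c_2$.

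It then suffices to check $R_1 + R_2 = 1$. The outer-corner hypothesis $(x,y) = \nu/\la$ forces $\la_y = x-1$ and $\la'_x = y-1$, which yields the clean identities $a_\la(c_1) = u$, $\ell_\la(c_2) = v$, $a_\la(c) = a_\la(c_2) + u$, and $\ell_\la(c) = \ell_\la(c_1) + v$. With these,
$$R_1 + R_2 = \frac{q^{a_\la(c_2)} t^v \bigl(1 - q^u t^{\ell_\la(c_1)}\bigr) + \bigl(1 - q^{a_\la(c_2)} t^v\bigr)}{1 - q^{a_\la(c)} t^{\ell_\la(c)}}$$
telescopes to $1$ on sight. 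The main obstacle I anticipate is the bookkeeping: three partitions' worth of arm- and leg-lengths need to be kept straight, and one must invoke the outer-corner identities at precisely the right moment to collapse the exponents. Once the constant-ratio observation is made explicit, the rest of the argument is mechanical.
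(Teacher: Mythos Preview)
Your proposal is correct and follows essentially the same approach as the paper's proof: induction on $u+v$, expansion by the one-step recursion, truncation of the sums using the monotonicity of the walk, application of the inductive hypothesis, and then substitution using the one-step expansions of $\qtP{\nu}{c_1}$ and $\qtP{\nu}{c_2}$ to reduce everything to the verification that $R_1+R_2=1$. Your ``constant ratio'' observation is exactly the mechanism the paper uses (phrased there as isolating the common sums $\sum_i q^{a(c)-i}\qtP{\nu}{(x+u-i,y)}$ and $\sum_j t^{j-1}\qtP{\nu}{(x,y+v-j)}$ and substituting), and your final telescoping identity matches the paper's computation verbatim once one uses $a_\la(c_2)=a_\la(c)-u$ and $\ell_\la(c_1)=\ell_\la(c)-v$.
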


\begin{proof}
Throughout this proof, we write $a(c) = a_\la(c)$ and $\ell(c) = \ell_\la(c)$.

We use induction on $u+v$, with the base cases $u = 0$ or $v = 0$ being immediate, since $\qtP{\nu}{\nu} = 1$. We compute
\begin{align*}
\qtP{\nu}{c} &= \sum_{c' \in \arm_\la(c)} P(c \rightarrow c') \qtP{\nu}{c'} + \sum_{c'' \in \leg_\la(c)} P(c \rightarrow c'') \qtP{\nu}{c''} \\
&= \dfrac{t^{\ell(c)}(1-q)}{1-q^{a(c)}t^{\ell(c)}} \qtP{\nu}{c_2} \sum_{i = 1}^u q^{a(c)-i} \qtP{\nu}{(x+u-i,y)} \\
& \quad\quad\quad\quad\quad\quad + \dfrac{1-t}{1-q^{a(c)}t^{\ell(c)}} \qtP{\nu}{c_1} \sum_{j=1}^v t^{j-1} \qtP{\nu}{(x,y+v-j)},
\end{align*}
where the first equality comes from the definition of the $(q,t)$-hook walk, and the second comes from the inductive hypothesis, together with the fact that $\qtP{\nu}{(x',y+v)} = 0$ if $x' < x$, and $\qtP{\nu}{(x+u,y')} = 0$ if $y' < y$.

We also have, by definition,
\begin{align*}
\qtP{\nu}{c_1} &= \dfrac{t^{\ell(c_1)} (1-q)}{1-q^{a(c_1)}t^{\ell(c_1)}}\sum_{i = 1}^u q^{a(c_1)-i} \qtP{\nu}{(x+u-i,y)} \\
&= q^{u-a(c)} \dfrac{t^{\ell(c)-v} (1-q)}{1-q^u t^{\ell(c)-v}} \sum_{i=1}^u q^{a(c)-i} \qtP{\nu}{(x+u-i,y)},
\end{align*}
and similarly
\[
\qtP{\nu}{c_2} = \dfrac{1-t}{1-q^{a(c)-u} t^v} \sum_{j=1}^v t^{j-1} \qtP{\nu}{(x,y+v-j)}.
\]
By isolating the sums in these two expressions and substituting into the expression for $\qtP{\nu}{c}$, we obtain
\begin{align*}
\qtP{\nu}{c} &= \qtP{\nu}{c_1}\qtP{\nu}{c_2}\left( q^{a(c)-u} t^v \dfrac{1-q^ut^{\ell(c)-v}}{1-q^{a(c)}t^{\ell(c)}} + \dfrac{1-q^{a(c)-u} t^v}{1-q^{a(c)}t^{\ell(c)}} \right) \\
&= \qtP{\nu}{c_1}\qtP{\nu}{c_2},
\end{align*}
completing the induction.
\end{proof}

\begin{lem} Suppose $\nu \in \mc{U}(\la)$, and let $(x,y) = \nu/\la$.
\label{lem_qt_hook_2}
\begin{enumerate}
\item If $c = (x+u,y)$, then
\[
\qtP{\nu}{c} = t^{\ell_\la(c)} \dfrac{1-q}{1-q^{a_\nu(c)+1}t^{\ell_\nu(c)}} \prod_{c' \in \arm_\la(c) - \nu/\la} \dfrac{1-q^{a_\la(c')+1}t^{\ell_\la(c')}}{1-q^{a_\nu(c')+1}t^{\ell_\nu(c')}}.
\]
\item If $c = (x,y+v)$, then
\[
\qtP{\nu}{c} = \dfrac{1-t}{1-q^{a_\nu(c)}t^{\ell_\nu(c)+1}} \prod_{c'' \in \leg_\la(c) - \nu/\la} \dfrac{1-q^{a_\la(c'')}t^{\ell_\la(c'')+1}}{1-q^{a_\nu(c'')}t^{\ell_\nu(c'')+1}}.
\]
\end{enumerate}
\end{lem}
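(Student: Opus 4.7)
The plan is to prove both parts by induction---on $u$ for part (1), and on $v$ for part (2). The two arguments are structurally identical, so I focus on part (1). The crucial observation is that since each step of the $(q,t)$-hook walk moves either leftward (into the arm) or downward (into the leg), once the walker leaves row $y$ it cannot return. Hence the walk starting at $c = (x+u, y)$ reaches $\nu/\la = (x,y)$ if and only if every step is a move into the arm, keeping the walker in row $y$ until it arrives at $(x,y)$.

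Writing $Q(k) = \qtP{\nu}{(x+k,y)}$ and $\ell_k = \ell_\la((x+k,y))$, and conditioning on the first step of the walk (discarding the zero-contribution leg moves), I obtain the recursion
\[
Q(u) = A_u \sum_{k=0}^{u-1} q^k Q(k), \qquad A_u = \frac{t^{\ell_u}(1-q)}{1 - q^u t^{\ell_u}}, \qquad u \geq 1,
\]
together with $Q(0) = 1$. Using that $(x,y)$ is an outer corner of $\la$, one has $a_\nu((x+k,y)) + 1 = k$ and $\ell_\nu((x+k,y)) = \ell_k$ for $k \geq 1$, so the claim of the lemma becomes $Q(u) = A_u R_u$ for $u \geq 1$, where
\[
R_u = \prod_{j=1}^{u-1}\frac{1-q^{j+1}t^{\ell_j}}{1-q^{j}t^{\ell_j}}.
\]

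To finish, I would induct on $u$. Writing $S_u$ for the sum in the recursion, the telescoping identity $S_u = S_{u-1} + q^{u-1}Q(u-1)$ combined with the inductive hypothesis gives
\[
S_u = R_{u-1}\bigl(1 + q^{u-1}A_{u-1}\bigr) = R_{u-1} \cdot \frac{1 - q^u t^{\ell_{u-1}}}{1 - q^{u-1} t^{\ell_{u-1}}} = R_u,
\]
the middle equality being a routine algebraic simplification. This yields $Q(u) = A_u R_u$, as claimed.

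Part (2) is proved by the identical argument with rows and columns swapped: the walk from $(x, y+v)$ reaches $(x,y)$ iff it stays in column $x$, moving only into the leg; conditioning on the first step produces an analogous recursion in $v$, and the same telescoping induction gives the product formula. The main obstacle is purely bookkeeping: translating correctly between the exterior arm/leg lengths appearing in the claimed formula and those appearing in the step probabilities of the hook walk, and identifying the outer corner $\nu/\la$ (sitting in $\arm_\la(c)$ for part (1) and in $\leg_\la(c)$ for part (2)) as the base case $Q(0) = 1$ of the recursion, which is precisely why it must be excluded from the product defining $R_u$.
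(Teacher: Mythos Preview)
Your proof is correct and is essentially the same argument as the paper's. The paper likewise conditions on the first step and uses only the arm contributions; where you induct on $u$ by computing $S_u = S_{u-1} + q^{u-1}Q(u-1)$ and verifying $S_u = R_u$, the paper subtracts the two expressions $Q(u)/A_u = S_u$ and $Q(u-1)/A_{u-1} = S_{u-1}$ directly to obtain the closed ratio $Q(u)/Q(u-1)$ and then iterates---the same telescoping, packaged slightly differently.
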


\begin{proof}
We prove part (1); part (2) is proved in the same way. Let $c_i = (x+u-i,y)$, so that $c_0 = (x+u,y)$, $c_u = \nu/\la$, and
\[
\arm_\la(c_0) = \{c_1, \ldots, c_u\},
\]
Note that $a_\la(c_i) = u-i$. By definition, we have
\begin{align*}
\qtP{\nu}{c_0} &= t^{\ell_\la(c_0)} \dfrac{1-q}{1-q^{a_\la(c_0)}t^{\ell_\la(c_0)}} \sum_{i=1}^u q^{u-i} \qtP{\nu}{c_i}, \\
\qtP{\nu}{c_1} &= t^{\ell_\la(c_1)} \dfrac{1-q}{1-q^{a_\la(c_1)}t^{\ell_\la(c_1)}} \sum_{i=2}^u q^{u-i} \qtP{\nu}{c_i}.
\end{align*}
Isolating the sums and subtracting the second equation from the first, we obtain
\[
\dfrac{1-q^{a_\la(c_0)}t^{\ell_\la(c_0)}}{t^{\ell_\la(c_0)}(1-q)} \qtP{\nu}{c_0} - \dfrac{1-q^{a_\la(c_1)}t^{\ell_\la(c_1)}}{t^{\ell_\la(c_1)}(1-q)} \qtP{\nu}{c_1} = q^{a_\la(c_1)} \qtP{\nu}{c_1},
\]
and thus
\[
\qtP{\nu}{c_0} = t^{\ell_\la(c_0)-\ell_\la(c_1)} \dfrac{1-q^{a_\la(c_1)+1}t^{\ell_\la(c_1)}}{1-q^{a_\la(c_0)}t^{\ell_\la(c_0)}} \qtP{\nu}{c_1}.
\]
Iterating this argument $u$ times, we find
\[
\qtP{\nu}{c_0} = t^{\ell_\la(c_0)} \prod_{i=0}^{u-1} \dfrac{1-q^{a_\la(c_{i+1})+1}t^{\ell_\la(c_{i+1})}}{1-q^{a_\la(c_i)}t^{\ell_\la(c_i)}}.
\]
The expression for $\qtP{\nu}{c_0}$ given in the statement of the lemma is obtained by pulling out the denominator of the $i = 0$ term and the numerator of the $i = u-1$ term, and using the fact that $a_\nu(c_i) = a_\la(c_i) - 1$ and $\ell_\nu(c_i) = \ell_\la(c_i)$.
\end{proof}

\begin{proof}[Proof of Theorem \ref{thm_qt_hook}]
Fix $\nu \in \mc{U}(\la)$, and let $(x,y) = \nu/\la$. Also fix $c = (x+u,y+v)$, with $x+u > \la_1$ and $y+v > \la'_1$. Set $c_1 = (x+u,y)$ and $c_2 = (x,y+v)$. By Lemmas \ref{lem_qt_hook_1} and \ref{lem_qt_hook_2}, we have
\[
\qtP{\nu}{c} = t^{\ell_\la(c_1)} \Pi_1 \Pi_2,
\]
where
\[
\Pi_1 = \dfrac{1-q}{1-q^{a_\nu(c_1)+1}t^{\ell_\nu(c_1)}} \prod_{c' \in \arm_\la(c_1) - \nu/\la} \dfrac{1-q^{a_\la(c')+1}t^{\ell_\la(c')}}{1-q^{a_\nu(c')+1}t^{\ell_\nu(c')}},
\]
\[
\Pi_2 = \dfrac{1-t}{1-q^{a_\nu(c_2)}t^{\ell_\nu(c_2)+1}} \prod_{c'' \in \leg_\la(c_2) - \nu/\la} \dfrac{1-q^{a_\la(c'')}t^{\ell_\la(c'')+1}}{1-q^{a_\nu(c'')}t^{\ell_\nu(c'')+1}}.
\]
We must show that $t^{\ell_\la(c_1)} \Pi_1 \Pi_2$ is equal to
\[
\mc{P}_\la(\la \rightarrow \nu) = t^{n(\nu/\la)} \alpha_{\nu/\la}(q,t).
\]
Since $x+u > \la_1$, we have $\ell_\la(c_1) = y-1 = n(\nu/\la)$. To show that $\Pi_1 \Pi_2 = \alpha_{\nu/\la}(q,t)$, we use an argument similar to the proof of Proposition \ref{prop_explicit_formulas}.

Let $(h_1, \ldots, h_d)$ and $(v_1, \ldots, v_d)$ be the parameters of $\la$, and suppose $\nu = \la^{(+s)}$, so that $(x,y) = (h_{s+1,d}+1,v_{1,s}+1)$. For $i = 1, \ldots, s-1$, consider the contribution to $\Pi_1$ of the cells in $\arm_\la(c_1)$ which lie in columns $h_{i+1,d}+1, \ldots, h_{i,d}$. Each of these cells has leg-length $v_{i+1,s}$ with respect to both $\la$ and $\nu$, so the product over these cells telescopes, and their net contribution to $\Pi_1$ is
\[
\dfrac{[h_{i,s},v_{i+1,s}]}{[h_{i+1,s},v_{i+1,s}]}.
\]
Similarly, the net contribution to $\Pi_1$ of the cells in $\arm_\la(c_1)$ which lie in columns $h_{s+1,d} + 2, \ldots, h_{s,d}$ is
\[
\dfrac{[h_s,0]}{1-q},
\]
and the net contribution of the cells in $\arm_\la(c_1)$ which lie in columns $h_{1,d} + 1, \ldots, x+u-1$ is
\[
\dfrac{[u,v_{1,s}]}{[h_{1,s},v_{1,s}]} = \dfrac{1-q^{a_\nu(c_1)+1}t^{\ell_\nu(c_1)}}{[h_{1,s},v_{1,s}]}
\]
(here we again use the fact that $x+u > h_{1,d}$). Thus, we have
\[
\Pi_1 = \dfrac{[h_s,0]}{[h_{1,s},v_{1,s}]} \prod_{i=1}^{s-1} \dfrac{[h_{i,s},v_{i+1,s}]}{[h_{i+1,s},v_{i+1,s}]} = \prod_{i=1}^s \dfrac{[h_{i,s},v_{i+1,s}]}{[h_{i,s},v_{i,s}]}.
\]

A similar argument (using the fact that $y+v > \la_1'$) shows that
\[
\Pi_2 = \prod_{i=s+1}^d \dfrac{[h_{s+1,i-1},v_{s+1,i}]}{[h_{s+1,i},v_{s+1,i}]}.
\]
By Proposition \ref{prop_explicit_formulas}, we conclude that $\Pi_1 \Pi_2 = \alpha_s(q,t) = \alpha_{\nu/\la}(q,t)$.
\end{proof}

\begin{rem}
We would very much like to have a similar interpretation of the probabilities $\mc{P}_\la(\mu \rightarrow \nu)$ for $\mu \in \mc{D}(\la)$, and of the backward probabilities $\ov{\mc{P}}_\la(\mu \leftarrow \nu)$. That is, we would like to define two random processes (perhaps similar to the $(q,t)$-hook walk) whose ``end'' probabilities are $\mc{P}_\la(\mu \rightarrow \nu)$ for fixed $\mu \in \mc{D}(\la)$, and $\ov{\mc{P}}_\la(\mu \leftarrow \nu)$ for fixed $\nu \in \mc{U}(\la)$.
\end{rem}

\bibliographystyle{abbrv}
\bibliography{qrst_revised}

\begin{thebibliography}{10}

\bibitem{BorodinPetrov16}
A.~Borodin and L.~Petrov.
\newblock {Nearest neighbor Markov dynamics on Macdonald processes}.
\newblock {\em Adv. Math.}, 300:71--155, 2016.

\bibitem{BufetovMatveev18}
A.~Bufetov and K.~Matveev.
\newblock {Hall--Littlewood RSK field}.
\newblock {\em Selecta Math. (N.S.)}, 24(5):4839--4884, 2018.

\bibitem{BufetovPetrov15}
A.~Bufetov and L.~Petrov.
\newblock Law of large numbers for infinite random matrices over a finite
  field.
\newblock {\em Selecta Math. (N.S.)}, 21(4):1271--1338, 2015.

\bibitem{BufetovPetrov19}
A.~Bufetov and L.~Petrov.
\newblock {Yang--Baxter field for spin Hall--Littlewood symmetric functions}.
\newblock {\em Forum Math. Sigma}, 7:e39, 70, 2019.

\bibitem{Burge74}
W.~H. Burge.
\newblock {Four correspondences between graphs and generalized Young tableaux}.
\newblock {\em J. Combin. Theory Ser. A}, 17:12--30, 1974.

\bibitem{Fomin_DGG_2}
S.~Fomin.
\newblock Schensted algorithms for dual graded graphs.
\newblock {\em J. Algebraic Combin.}, 4(1):5--45, 1995.

\bibitem{Fomin_Knuth}
S.~Fomin.
\newblock Schur operators and {K}nuth correspondences.
\newblock {\em J. Combin. Theory Ser. A}, 72(2):277--292, 1995.

\bibitem{Fomin86}
S.~V. Fomin.
\newblock {The generalized Robinson--Schensted--Knuth correspondence}.
\newblock {\em Zap. Nauchn. Sem. Leningrad. Otdel. Mat. Inst. Steklov. (LOMI)},
  155:156--175, 1986.
\newblock Translation in J. Sov. Math. 41(2):979--991, 1988.

\bibitem{Fulton97}
W.~Fulton.
\newblock {\em Young tableaux}, volume~35 of {\em London Mathematical Society
  Student Texts}.
\newblock Cambridge University Press, Cambridge, 1997.

\bibitem{GarHai}
A.~M. Garsia and M.~Haiman.
\newblock A random {$q$}, {$t$}-hook walk and a sum of {P}ieri coefficients.
\newblock {\em J. Combin. Theory Ser. A}, 82(1):74--111, 1998.

\bibitem{Gessel93}
I.~M. Gessel.
\newblock Counting paths in {Y}oung's lattice.
\newblock {\em J. Statist. Plann. Inference}, 34(1):125--134, 1993.

\bibitem{GNW1}
C.~Greene, A.~Nijenhuis, and H.~Wilf.
\newblock A probabilistic proof of a formula for the number of {Y}oung tableaux
  of a given shape.
\newblock {\em Adv. in Math.}, 31(1):104--109, 1979.

\bibitem{GNW2}
C.~Greene, A.~Nijenhuis, and H.~Wilf.
\newblock Another probabilistic method in the theory of {Y}oung tableaux.
\newblock {\em J. Combin. Theory Ser. A}, 37(2):127--135, 1984.

\bibitem{Kashiwara}
M.~Kashiwara.
\newblock On crystal bases of the {$Q$}-analogue of universal enveloping
  algebras.
\newblock {\em Duke Math. J.}, 63(2):465--516, 1991.

\bibitem{Kerov}
S.~V. Kerov.
\newblock {$q$}-analogue of the hook walk algorithm and random {Y}oung
  tableaux.
\newblock {\em Funktsional. Anal. i Prilozhen}, 26(3):35--45, 1992.
\newblock Translation in Funct. Anal. Appl., 26(3):179--187, 1992.

\bibitem{Mac}
I.~G. Macdonald.
\newblock {\em Symmetric functions and {H}all polynomials}.
\newblock Oxford Mathematical Monographs. The Clarendon Press, Oxford
  University Press, New York, second edition, 1995.

\bibitem{MatveevPetrov17}
K.~Matveev and L.~Petrov.
\newblock {$q$-randomized Robinson--Schensted--Knuth correspondences and random
  polymers}.
\newblock {\em Ann. Inst. Henri Poincar\'{e} D}, 4(1):1--123, 2017.

\bibitem{OConnellPei13}
N.~O'Connell and Y.~Pei.
\newblock A {$q$}-weighted version of the {R}obinson--{S}chensted algorithm.
\newblock {\em Electron. J. Probab.}, 18(95):25 pp., 2013.

\bibitem{Pei14}
Y.~Pei.
\newblock {A symmetry property for $q$-weighted Robinson--Schensted and other
  branching insertion algorithms}.
\newblock {\em J. Algebraic Combin.}, 40(3):743--770, 2014.

\bibitem{Pei17}
Y.~Pei.
\newblock {A $q$-Robinson--Schensted--Knuth algorithm and a $q$-polymer}.
\newblock {\em Electron. J. Combin.}, 24(4):P4.6, 38pp., 2017.

\bibitem{PetrovSaenz}
L.~Petrov and A.~Saenz.
\newblock Mapping {TASEP} back in time.
\newblock \texttt{arXiv:1907.09155}, 2019.

\bibitem{Stanley88}
R.~P. Stanley.
\newblock Differential posets.
\newblock {\em J. Amer. Math. Soc.}, 1(4):919--961, 1988.

\bibitem{EC2}
R.~P. Stanley.
\newblock {\em Enumerative combinatorics. {V}ol. 2}, volume~62 of {\em
  Cambridge Studies in Advanced Mathematics}.
\newblock Cambridge University Press, Cambridge, 1999.
\newblock With a foreword by Gian-Carlo Rota and appendix 1 by Sergey Fomin.

\bibitem{vanLeeuwen05}
M.~A.~A. van Leeuwen.
\newblock Spin-preserving {K}nuth correspondences for ribbon tableaux.
\newblock {\em Electron. J. Combin.}, 12:R10, 65, 2005.

\end{thebibliography}

\bigskip

{\footnotesize
  \textsc{LaCIM, Universit\'e du Qu\'ebec \`a Montr\'eal, Montr\'eal, QC, Canada} \par  
  \textit{Email addresses}: \texttt{florian.aigner@univie.ac.at}, \texttt{gabriel.frieden@lacim.ca} \par
}

\end{document}